\numberwithin{equation}{section}
\newcommand{\R}{\mathbb{R}} 
\newcommand{\C}{\mathbb{C}} 
\newcommand{\E}{\mathbb{E}} 
\newcommand{\bigO}{\mathrm{O}} 
\newcommand{\littleo}{\mathrm{o}} 
\newcommand{\N}{\mathbb{N}} 
\newcommand{\B}{\mathcal{B}} 
\newcommand{\M}{\mathcal{M}} 
\newcommand{\Leb}{\mathrm{L}} 
\newcommand{\A}{\mathcal{A}} 
\newcommand{\J}{\mathcal{J}} 
\newcommand{\Leg}{\mathcal{L}} 
\newcommand{\Log}{\operatorname{Log}}
\newcommand{\supp}{\operatorname{supp}}
\renewcommand{\Re}{\operatorname{Re}}
\renewcommand{\Im}{\operatorname{Im}}
\newcommand{\mc}[1]{\mathcal{#1}}
\renewcommand{\sf}[1]{\mathsf{#1}}
\newcommand{\scr}[1]{\mathscr{#1}}
\renewcommand{\tilde}[1]{\widetilde{#1}}
\renewcommand{\epsilon}{\varepsilon}
\renewcommand{\geq}{\geqslant}
\renewcommand{\leq}{\leqslant}
\renewcommand{\bar}[1]{\overline{#1}}
\renewcommand{\liminf}{\varliminf}
\renewcommand{\limsup}{\varlimsup}
\newcommand{\ap}{\scr{A}\scr{P}}
\theoremstyle{plain}
\newtheorem{proposition}{Proposition}[section]
\newtheorem{lemma}{Lemma}[section]
\newtheorem{corollary}{Corollary}[section]
\newtheorem{theorem}{Theorem}[section]
\newtheorem*{conjecture}{Conjecture}
\theoremstyle{definition}
\newtheorem{definition}{Definition}[section]
\newtheorem{example}{Example}[section]
\theoremstyle{remark}
\newtheorem{remark}{Remark}[section]
\setlist[enumerate,1]{label=(\arabic*),ref=(\arabic*)}
\setlist[enumerate,2]{label=(\alph*),ref=(\arabic{enumi})(\alph*)}
\setlist[enumerate,3]{label=(\roman*),ref=(\arabic{enumi})(\alph{enumii})(\roman*)}
\setlist[enumerate,4]{label=(\Alph*),ref=(\arabic{enumi}-\alph{enumii}-\roman{enumiii}-\Alph*)} 
\begin{document}

\title{The log-L\'evy moment problem via Berg-Urbanik semigroups}

\author{P.~Patie}
\thanks{This work was partially supported by NSF Grant DMS-1406599, a CNRS grant, and the ARC IAPAS, a fund of the Communaut\'ee fran\c{c}aise de Belgique. Both authors are grateful for the hospitality of the  Laboratoire de Math\'ematiques et de leurs applications de Pau, where this work was initiated.}
\address{School of Operations Research and Information Engineering, Cornell University, Ithaca, NY 14853.}
\email{pp396@orie.cornell.edu}

\author{A.~Vaidyanathan}
\address{Center for Applied Mathematics, Cornell University, Ithaca, NY, 14853.}
\email{av395@cornell.edu}

\subjclass[2010]{Primary 44A60, 60E05; Secondary 60B15}
\keywords{Multiplicative convolution semigroups, Stieltjes moment problem,  asymptotic analysis, L\'evy processes}

\begin{abstract}
We consider the Stieltjes moment problem for the Berg-Urbanik semigroups which form a class of multiplicative convolution semigroups on $\R_+$ that is in bijection with the set of Bernstein functions. In \cite{berg:2004}, Berg and Dur\'an  proved that the law of such semigroups is moment determinate (at least) up to time $t=2$, and, for the Bernstein function $\phi(u)=u$, Berg \cite{berg:2005} made the striking observation that  for time $t>2$ the law of this  semigroup is moment indeterminate. We extend these works by estimating the threshold time $\scr{T}_\phi \in [2,\infty]$ that it takes for the law of such Berg-Urbanik semigroups to transition from moment determinacy to moment indeterminacy in terms of simple properties of the underlying Bernstein function $\phi$, such as its Blumenthal-Getoor index. One of the several strategies we implement to deal with the different cases relies on a non-classical Abelian type criterion for the moment problem, recently proved by the authors in \cite{patie:2018a}. To implement this approach we provide detailed information regarding distributional properties of the semigroup such as existence and smoothness of a density, and, the large asymptotic behavior for all $t > 0$ of this density along with its successive derivatives. In particular, these results, which  are original in the L\'evy processes literature, may be of independent interest.
\end{abstract}

\maketitle

\section{Introduction}
The aim of this paper is to study the Stieltjes moment determinacy for multiplicative convolution semigroups $(\nu_t)_{t \geq 0}$, that is semigroups satisfying, for $n, t \geq 0$,
\begin{equation*}
\int_0^\infty x^n \nu_t(dx) = \int_{-\infty}^\infty e^{ny} \hspace{1pt} \mathbb{P}(Y_t \in dy) = e^{t\Psi(n)}
\end{equation*}
where $(Y_t)_{t \geq 0}$ is a one-dimensional L\'evy process such that $\mathbb{E}[e^{nY_t}] < \infty$, for all $n, t \geq 0$. In other words, we study the moment determinacy of the law of a process whose logarithm is a L\'evy process, and we call this problem the log-L\'evy moment problem, for short.


We first point out that if $\Psi(n) = \frac{1}{2}n^2$ then $(\nu_t)_{t \geq 0}$ boils down to the semigroup of the geometric  Brownian motion, whose law  is indeterminate by its moments for all $t > 0$. This is because for any $t > 0$ the geometric Brownian motion is log-normally distributed, and it is well-known that a log-normal distribution is indeterminate by its moments. More generally, in \cite[Theorem 2.1]{patie:2018a} it is proved that the log-L\'evy moment problem is indeterminate for all $t>0$ whenever the associated L\'evy process has a Gaussian component, a case that we exclude from our analysis.

Moreover, Urbanik, in \cite{urbanik:1992}, introduced the multiplicative convolution semigroup of probability densities $(e_t)_{t \geq 0}$ satisfying, for $n, t \geq 0$,
\begin{equation}
\label{eq:classical-Urbanik}
\int_0^\infty x^n e_t(x) dx = (n!)^t = \exp\left({t \sum_{k=1}^n \log k}\right) = \exp\left(t\int_0^\infty (e^{-ny}-1-n(e^{-y}-1)) \frac{dy}{y(e^y-1)}\right),
\end{equation}
and Berg \cite[Theorem 2.5]{berg:2005} discovered that the measure $e_t(x)dx$ is moment determinate if and only if $t \leq 2$. This interesting fact reveals that the log-L\'evy moment problem can be non-trivial, since there can exist a threshold time $\scr{T} \in [0,\infty]$ such that $\nu_t$ is moment determinate for $0 \leq t \leq \scr{T}$ and moment indeterminate for $t > \scr{T}$.

In the same paper, Berg defined a family of multiplicative convolution semigroups $(\nu_t)_{t \geq 0}$ that are in bijection with the set of Bernstein functions $\B$, see \eqref{eq:def-berns} below for definition. In particular, for any $\phi \in \B$, the moments of $\nu_t$  are given, for $n , t \geq 0$, by
\begin{equation}
\label{eq:moments-nu-t}
\M_{\nu_t}(n) = \int_0^\infty x^{n} \nu_t(dx)=\left(\prod_{k=1}^n \phi(k) \right)^t
\end{equation}
where $\M_{\nu_t}$ is called the moment transform of $\nu_t$ and for $n=0$ the product is assumed to be $1$. We call these the Berg-Urbanik semigroups, since \eqref{eq:classical-Urbanik} corresponds to the specific case $\phi(u) = u$ of \eqref{eq:moments-nu-t}. Note that, for a probability measure $\lambda$, there is the notion of Urbanik decomposability semigroups $\mathbb{D}(\lambda)$, which have also been referred to as Urbanik semigroups in the literature, see e.g.~\cite{jurek:1993,jurek:2009}, and are distinct from the semigroups $(\nu_t)_{t \geq 0}$ defined via \eqref{eq:moments-nu-t}. Furthermore in \cite[Theorem 2.2]{berg:2007} it was also shown that, $\M_{\nu_t}$ admits an analytical extension to the right-half plane, and, for $\Re(z) \geq 0$ and $t \geq 0$,
\begin{equation*}
\M_{\nu_t}(z)  = e^{t\Psi(z)}
\end{equation*}
where
\begin{equation}
\label{eq:Psi-representation}
\Psi(z) = z\log \phi(1) + \int_0^\infty (e^{-zy}-1-z(e^{-y}-1)) \frac{\kappa(dy)}{y(e^y-1)}, \quad \text{and} \quad \int_0^\infty e^{-uy}\kappa(dy) = \frac{\phi'(u)}{\phi(u)},
\end{equation}
with $\kappa(dy) = \int_0^y U(dy - r) (r\mu(dr) + \delta_{\sf{d}}(dr))$, where $U$ is the potential measure, $\mu$  the L\'evy measure and $\sf{d}$ the drift of $\phi$, see \eqref{eq:def-pm} and \eqref{eq:Bernstein-function} below for definitions. This is the general form of the right-most equality in \eqref{eq:classical-Urbanik}, and we note that Hirsch and Yor have also derived \eqref{eq:Psi-representation} using different means, see~\cite[Theorem 3.1]{hirsch:2013}. We mention that Hirsch and Yor also offer a nice exposition on the wealth of results by Urbanik in \cite{urbanik:1995}, which continues the investigations started in \cite{urbanik:1992}.

The log-L\'evy moment problem for general Berg-Urbanik semigroups is only partially understood. It is known that any Berg-Urbanik semigroup is moment determinate for $t \leq 2$, see~\cite{berg:2005}, and that there are Berg-Urbanik semigroups that are moment determinate for all $t \geq 0$, see~\cite{berg:2007},  however much less is known concerning moment indeterminacy. We were inspired by Berg's results, in particular his remarkable discovery of the threshold for the classical Urbanik semigroup $(e_t)_{t \geq 0}$, to further study the log-L\'evy moment problem in this setting. In particular, our aim was to understand how to estimate the threshold time $\scr{T}$ from simple properties of the underlying Bernstein function, and our main contribution in this regard is \Cref{thm:threshold-result} below, which provides several new and original results in this area.

One of our approaches stems on a recent Abelian type criterion for the moment problem, established by the authors, that gives a necessary and sufficient condition for moment indeterminacy, see \cite[Theorem 1.2]{patie:2018a}. To utilize this criterion we resort to proving the existence of densities for certain Berg-Urbanik semigroups and study their large asymptotic behavior. To obtain such asymptotics we apply, in a novel and non-standard way, a closure result for Gaussian tails obtained by Balkema et al.~\cite{balkema:1993} combined with some recent Gaussian tail asymptotics estimates due to Patie and Savov \cite{patie:2015}. 

The remaining part of the paper is organized as follows. In \Cref{sec:main-results} we state our main result for the log-L\'evy moment problem, as well some auxiliary results on Berg-Urbanik semigroups and L\'evy processes. In \Cref{sec:examples} we discuss some illustrative examples of Berg-Urbanik semigroups. Finally, \Cref{sec:main-proofs} is devoted to the proofs of the results stated in \Cref{sec:main-results}.

\section{Main results} \label{sec:main-results}

We start with some preliminaries. Let $\phi:[0,\infty) \to [0,\infty)$ be the function defined by
\begin{equation}
\label{eq:Bernstein-function}
\phi(u) = \sf{k} + \sf{d}u + \int_0^\infty (1-e^{-uy})\mu(dy),
\end{equation}
where $\sf{k},\sf{d} \geq 0$ and $\mu$ is a Radon measure on $(0,\infty)$ that satisfies $\int_0^\infty (1 \wedge y) \mu(dy) < \infty$. We write $\B$ for the set of Bernstein functions, which is defined as
\begin{equation} 
\label{eq:def-berns}
\B = \left\{\phi:[0,\infty) \to [0,\infty); \:\phi \text{ is of the form } \eqref{eq:Bernstein-function}\right\}.
\end{equation}
Note that $\B$ is a convex cone, i.e.~for $\phi_1,\phi_2 \in \B$ and $c_1,c_2 > 0$ one has $c_1\phi_1 + c_2\phi_2 \in \B$, and also that the triplet $(\sf{k}, \sf{d}, \mu)$ in \eqref{eq:Bernstein-function} uniquely determines any $\phi \in \B$. We recall that the mapping $u \mapsto \phi'(u)$ is completely monotone, i.e.~$\phi' \in \mathtt{C}^{\infty}(\R^+)$, the space of infinitely continuously differentiable functions on $\R^+$ and  for all $n\in \N $ and $u\geq 0$, $(-1)^{n}\phi^{(n+1)}(u)\geq 0$. It is  well-known that the mapping $u\mapsto \frac{1}{\phi(u)}$ is also completely monotone and the corresponding Radon measure $U$ is the so-called potential measure of (the subordinator associated  to) $\phi$, i.e.~for any $u\geq 0$,
\begin{equation}
\label{eq:def-pm}
\int_{0}^{\infty}e^{-uy}U(dy)=\frac{1}{\phi(u)}.
\end{equation}
We refer to the excellent monograph \cite{schilling:2012} for further information on Bernstein functions, and also to \cite[Section 4]{patie:2015} and \cite[Section 3]{patie:2016}, in which several properties of Bernstein functions that are used in the proofs are collected. In what follows we systematically exclude the trivial Bernstein function $\phi \equiv 0$ since this yields the degenerate convolution semigroup of a Dirac mass at 1 for all time.

A family of measures $(\nu_t)_{t \geq 0}$ is said to be a multiplicative convolution semigroup if, for $t, s \geq 0$ we have $\nu_t \diamond \nu_s = \nu_{t+s}$, where $\diamond$ denotes the product convolution on the multiplicative group $(\R_+,\times)$. Next, we define the moment transform of an integrable function $f:\R_+ \to \R$, and of a probability measure $\rho$ supported on $[0,\infty)$, for (at least) $z \in i\R$ as
\begin{equation*}
\M_f(z) = \int_0^\infty x^{z} f(x)dx, \quad \text{and} \quad \M_\rho(z) = \int_0^\infty x^{z}\rho(dx),
\end{equation*}
and observe that the moment transform is simply a shift of the classical Mellin transform. The moments of $\rho$, if they exist, are given, for $n \geq 0$, by
\begin{equation*}
\M_\rho(n) = \int_0^\infty x^n \rho(dx).
\end{equation*}
We say that a measure $\rho$ supported on $[0,\infty)$ is Stieltjes moment determinate, or simply moment determinate for short, if the sequence $(\M_\rho(n))_{n \geq 0}$ uniquely characterizes the measure $\rho$ among all probability measures supported on $[0,\infty)$ and admitting all moments. Otherwise, we say $\rho$ is moment indeterminate. The moment problem for probability measures supported on $[0,\infty)$ has been intensively studied for many years, going back to the original memoir by Stieltjes \cite{stieltjes:1894}. For excellent references on aspects of the Stieltjes (and other) moment problems see the classic texts~\cite{akhiezer:1965} and~\cite{shohat:1943}, as well as the more recent monograph \cite{schmudgen:2017}.

We now state the definition of Berg-Urbanik semigroups, whose validity is justified by \cite[Theorem 1.8]{berg:2005}.

\begin{definition}
\label{def:Urbanik semigroup}
Let $\phi \in \B$. Then the \emph{Berg-Urbanik semigroup associated to $\phi$} is the unique multiplicative convolution semigroup $(\nu_t)_{t \geq 0}$ of probability measures characterized, for any $ t \geq 0$ and $\Re(z)>0$, by
\begin{equation*}
\label{eq:definition of gen. Urbanik semigroup}
\M_{\nu_t}(z) = e^{t\Psi(z)}
\end{equation*}
where $\Psi$ was defined in \eqref{eq:Psi-representation}. Recall  that, for any $n\in \N$ and $t>0$,  $ e^{t\Psi(n)}=\left(\prod_{k=1}^n \phi(k) \right)^t$.
\end{definition}
Occasionally we write $(\nu_t^\phi)_{t \geq 0}$ to emphasize the dependence of the Berg-Urbanik semigroup on the Bernstein function, but will mostly drop this superscript for convenience. In such cases the Bernstein function will be clear from the context.

\subsection{The log-L\'evy moment problem for Berg-Urbanik semigroups} \label{subsec:threshold}

To describe our first main result  we introduce the \emph{threshold index}. For each $\phi \in \B$ we let $\scr{T}_\phi \in [0,\infty]$ be defined by
\begin{equation*}
\mathscr{T}_\phi = \inf\{t > 0; \: \nu_t^\phi \text{ is indeterminate}\} = \sup\{t > 0;\: \nu_t^\phi \text{ is determinate}\},
\end{equation*}
where we utilize the bijection between $\B$ and the set of Berg-Urbanik semigroups, as well as the convention that $\sup \emptyset = 0$. It is justified to call $\scr{T}_\phi$ a threshold index since $(\nu_t)_{t \geq 0}$ is a multiplicative convolution semigroup and according  to \cite[Lemma 2.2 and Remark 2.3]{berg:2004}, a measure $\mu \diamond \sigma$ is moment indeterminate if $\mu$ is indeterminate and $\sigma \neq c\delta_0$, $c > 0$. Since, for any $\phi \in \B$, $\nu_t$ is moment determinate for $t \leq 2$, it follows that $\scr{T}_\phi \geq 2$. In the case when $\scr{T}_\phi = \infty$ we say the Berg-Urbanik semigroup is \emph{completely determinate}, otherwise if $\scr{T}_\phi \in [2,\infty)$ we say the semigroup is \emph{threshold determinate}. We proceed by defining some subsets of $\B$ that will be useful to state our main results. First, let
\begin{equation*}
\B_d = \{\phi \in \B; \: \sf{d} > 0\}
\end{equation*}
denote the set of Bernstein functions with a positive drift. Next, write 
\begin{equation*}
\B_\J = \{\phi \in \mc{B}; \: \mu(dy) = v(y)dy \text{ with } v \text{ non-increasing}\}
\end{equation*}
and note that this is sometimes referred to as the Jurek class of Bernstein functions, due to \cite{jurek:1985}, see also~\cite[Chapter 10]{schilling:2012}. For a Bernstein function $\phi$ we write $\phi(\infty) = \lim_{u \to \infty} \phi(u) \in (0,\infty]$, and define its Blumenthal-Getoor index as
\begin{equation}
\label{eq:def-Blumenthal-Getoor}
\beta_\phi = \inf\left\lbrace \beta \geq 0; \limsup_{u \to \infty} u^{-\beta}\phi(u) < \infty \right\rbrace \in [0,1],
\end{equation}
noting that this definition coincides with the original one in \cite{blumenthal:1961} for driftless subordinators. We also define the lower index of $\phi$
\begin{equation*}
\delta_\phi = \sup\left\lbrace \delta \geq 0; \: \liminf_{u \to \infty} u^{-\delta}\phi(u) > 0 \right\rbrace,
\end{equation*}
which has appeared in the study of shift-Harnack inequalities for subordinate semigroups, see \cite{deng:2015}. From these definitions it is clear that $0 \leq \delta_\phi \leq \beta_\phi \leq 1$, and moreover one can construct an example for which strict inequality is possible, see \cite[Section 6]{blumenthal:1961}. In view of this, we set
\begin{equation*}
\B_{\asymp} = \{\phi \in \B; \: \delta_\phi = \beta_\phi\}.
\end{equation*}
We are now ready to state our main result regarding the log-L\'evy moment problem for Berg-Urbanik semigroups.
\begin{theorem}
\label{thm:threshold-result}
Let $(\nu_t)_{t \geq 0}$ be the Berg-Urbanik semigroup associated to $\phi \in \B$.
\begin{enumerate}
\item \label{item-1:thm:threshold-result} The inequality
\begin{equation*}
\scr{T}_\phi  \geq \frac{2}{\beta_\phi}
\end{equation*}
holds, and if $\beta_\phi > 0$ and $\limsup_{u \to \infty} u^{-\beta_\phi}\phi(u) < \infty$ then $\nu_{\scr{T}_\phi}$ is moment determinate. In particular, if $\phi(\infty) < \infty$ then $\beta_\phi = 0$ and $(\nu_t)_{t \geq 0}$ is completely determinate.
\end{enumerate}
Moreover, the following hold.
\begin{enumerate}
\addtocounter{enumi}{1} \item \label{item-2:thm:threshold-result} If $\phi \in \B_d$ then $\scr{T}_\phi = 2$, and $\nu_2$ is moment determinate.
\item \label{item-3:thm:threshold-result} If $\phi^\mathfrak{t} \in \B_\J$ for all $\mathfrak{t} \in (0,1)$, then
\begin{equation}
\label{eq:inequality-T}
\frac{2}{\beta_\phi} \leq \scr{T}_\phi \leq \frac{2}{\delta_\phi},
\end{equation}
and hence, if additionally $\phi \in \B_{\asymp}$, then
\begin{equation*}
\scr{T}_\phi = \frac{2}{\beta_\phi}.
\end{equation*}
\item \label{item-4:thm:threshold-result} If there exists $\vartheta \in \B$ such that $\frac{\phi}{\vartheta} \in \B$, then $\scr{T}_\phi \leq \scr{T}_\vartheta$. In particular, if $\vartheta^\mathfrak{t} \in \B_\J$ for all $\mathfrak{t} \in (0,1)$, then
\begin{equation*}
\scr{T}_\phi \leq \frac{2}{\delta_\vartheta}.
\end{equation*}
\end{enumerate}
\end{theorem}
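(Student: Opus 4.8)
The plan is to separate the \emph{small}\,--\,$t$ (determinacy) from the \emph{large}\,--\,$t$ (indeterminacy) regime, which call for entirely different tools, and then to read off \ref{item-2:thm:threshold-result} and \ref{item-4:thm:threshold-result} from these together with soft arguments.

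\emph{Determinacy and part \ref{item-1:thm:threshold-result}.} I would get $\scr{T}_\phi\geq 2/\beta_\phi$ from Carleman's criterion: a probability measure $\rho$ on $[0,\infty)$ with all moments finite is moment determinate whenever $\sum_{n\geq 1}\M_\rho(n)^{-1/(2n)}=\infty$. For $\beta>\beta_\phi$, the definition \eqref{eq:def-Blumenthal-Getoor} gives $\phi(u)\leq C_\beta u^{\beta}$ for large $u$, hence $\M_{\nu_t}(n)=\big(\prod_{k=1}^{n}\phi(k)\big)^{t}\leq (C'_\beta)^{nt}(n!)^{\beta t}$; Stirling then yields $\M_{\nu_t}(n)^{-1/(2n)}\geq c\,n^{-\beta t/2}$, so the Carleman series diverges once $\beta t\leq 2$, and letting $\beta\downarrow\beta_\phi$ shows $\nu_t$ is determinate for all $t<2/\beta_\phi$. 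If moreover $\limsup_{u\to\infty}u^{-\beta_\phi}\phi(u)<\infty$ one may take $\beta=\beta_\phi$ outright, so the estimate persists at $t=2/\beta_\phi$, where the Carleman series is harmonic and still diverges; thus $\nu_{2/\beta_\phi}$ is determinate, which is the asserted determinacy of $\nu_{\scr{T}_\phi}$ in every case where the other parts pin down $\scr{T}_\phi=2/\beta_\phi$. Finally, $\phi(\infty)<\infty$ forces $\beta_\phi=0$ (so $2/\beta_\phi=\infty$), and directly $\M_{\nu_t}(n)\leq\phi(\infty)^{nt}$ makes the Carleman series diverge for every $t$, i.e.\ complete determinacy.

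\emph{Indeterminacy for large $t$: parts \ref{item-2:thm:threshold-result}, \ref{item-3:thm:threshold-result} and the upper bound in \eqref{eq:inequality-T}.} Here I would route everything through (i) the auxiliary distributional results on Berg--Urbanik semigroups --- existence and smoothness of a density $f_t$ of $\nu_t$ together with the precise large-$x$ behaviour of $f_t$ and of its successive derivatives --- and (ii) the Abelian criterion \cite[Theorem 1.2]{patie:2018a}, which characterises moment indeterminacy of such a density through convergence of a Krein--Lin type integral built from $-\log f_t$ (and, to handle borderline cases, from its derivatives). The guiding heuristic, to be made rigorous, is: writing $\nu_t$ as the law of $e^{Y_t}$ and using $\M_{\nu_t}(z)=e^{t\Psi(z)}$ with the growth $\Psi(\lambda)\asymp\gamma\,\lambda\log\lambda$ coming from $\phi(u)\asymp u^{\gamma}$, a Legendre-transform/saddle-point estimate gives $-\log f_t(x)\asymp x^{1/(t\gamma)}$, so the criterion's integral $\int^{\infty}x^{1/(t\gamma)}x^{-3/2}\,dx$ converges exactly when $t\gamma>2$, i.e.\ $\nu_t$ is indeterminate precisely for $t>2/\gamma$. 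To furnish the \emph{lower} bound on $f_t$ that the criterion needs I would use the lower index: for $\delta<\delta_\phi$ one has $\phi(u)\geq c\,u^{\delta}$, hence $\Psi(\lambda)\gtrsim\delta\,\lambda\log\lambda$ and $f_t(x)\gtrsim\exp(-Cx^{1/(t\delta)})$, giving indeterminacy for $t>2/\delta$ and, after $\delta\uparrow\delta_\phi$, the bound $\scr{T}_\phi\leq 2/\delta_\phi$ of \eqref{eq:inequality-T}; combined with part \ref{item-1:thm:threshold-result} this is $2/\beta_\phi\leq\scr{T}_\phi\leq 2/\delta_\phi$, which collapses to $\scr{T}_\phi=2/\beta_\phi$ on $\B_{\asymp}$. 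The hypothesis $\phi^{\mathfrak{t}}\in\B_\J$ for all $\mathfrak{t}\in(0,1)$ is what makes (i) available in part \ref{item-3:thm:threshold-result}: through the semigroup reductions $\nu_t^\phi=\nu_1^{\phi^{\mathfrak{t}}}\diamond(\cdots)$ and the regularity (unimodality/smoothness) of Jurek-class--driven semigroups, it guarantees the density exists with a tail regular enough to feed \cite{balkema:1993} and the Gaussian-tail estimates of \cite{patie:2015}. For part \ref{item-2:thm:threshold-result} one first checks that $\phi\in\B_d$ forces $\beta_\phi=\delta_\phi=1$ (from $\sf{d}u\leq\phi(u)=\mathrm{O}(u)$), so part \ref{item-1:thm:threshold-result} already gives $\scr{T}_\phi\geq 2$ and $\nu_2$ determinate; the positive drift provides a bounded potential density, hence enough smoothness to run the same density analysis \emph{unconditionally} with $\gamma=1$, yielding $\scr{T}_\phi\leq 2$, so $\scr{T}_\phi=2$.

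\emph{Part \ref{item-4:thm:threshold-result}, by a convolution factorisation.} If $\vartheta\in\B$ and $\varrho:=\phi/\vartheta\in\B$, then from \eqref{eq:moments-nu-t} the moments factor, $\M_{\nu_t^\phi}(n)=\M_{\nu_t^\vartheta}(n)\,\M_{\nu_t^\varrho}(n)$ for all $n,t\geq 0$; by analytic continuation this persists for the moment transforms on $i\R$, and injectivity of the latter (it is a Fourier transform after the change of variable $x\mapsto\log x$) gives $\nu_t^\phi=\nu_t^\vartheta\diamond\nu_t^\varrho$, a genuine product convolution of probability measures on $\R_+$. Since by \cite[Lemma 2.2 and Remark 2.3]{berg:2004} a product convolution with an indeterminate factor (and non-degenerate second factor, which $\nu_t^\varrho$ always is) is itself indeterminate, indeterminacy of $\nu_t^\vartheta$ --- valid for every $t>\scr{T}_\vartheta$ --- forces indeterminacy of $\nu_t^\phi$, whence $\scr{T}_\phi\leq\scr{T}_\vartheta$. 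The ``in particular'' then follows by applying the already-proved part \ref{item-3:thm:threshold-result} to $\vartheta$, giving $\scr{T}_\vartheta\leq 2/\delta_\vartheta$.

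\emph{Main obstacle.} The crux is ingredient (i): establishing, rigorously and with control of \emph{all} successive derivatives, the sharp asymptotics $-\log f_t(x)\asymp x^{1/(t\gamma)}$ of the Berg--Urbanik density. This is not a classical Cram\'er-type tail but an exponential-functional--type one that must be extracted from the Laplace exponent $\Psi$ via the Gaussian-tail closure of \cite{balkema:1993} together with \cite{patie:2015}, under exactly the minimal structural hypothesis on $\phi$ --- positive drift, or all powers in the Jurek class --- that keeps the semigroup density well behaved; converting the two-sided index information $\delta_\phi\leq\beta_\phi$ into matching two-sided bounds on $-\log f_t$ and analysing the Krein--Lin integral at the endpoint $t=2/\beta_\phi$ are the remaining delicate points.
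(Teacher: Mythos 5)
Your parts \ref{item-1:thm:threshold-result}, \ref{item-3:thm:threshold-result} and \ref{item-4:thm:threshold-result} follow essentially the paper's route: Carleman's criterion together with $\phi(u)\stackrel{\infty}{=}\bigO(u^\beta)$ for $\beta>\beta_\phi$ (the paper runs the same series through the asymptotic $W_\phi(n+1)\stackrel{\infty}{\sim}C_\phi\sqrt{\phi(n)}\,e^{\int_1^n\log\phi(r)dr}$ from \cite{patie:2015}, but your direct bound on $\prod_{k\leq n}\phi(k)$ reduces to the same divergence test $\sum_n\phi^{-t/2}(n)=\infty$); the density asymptotics of \Cref{thm:asymptotics-infinity} fed into the Abelian criterion of \cite{patie:2018a} for the bound $\scr{T}_\phi\leq 2/\delta_\phi$; and the factorization of moment sequences plus \cite[Lemma 2.2 and Remark 2.3]{berg:2004} for part \ref{item-4:thm:threshold-result}. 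One caution on \ref{item-3:thm:threshold-result}: the criterion actually invoked (\Cref{thm:Abelian-theorem}) is a series criterion that requires the two-sided control $\nu_t(x)\stackrel{\infty}{\asymp}e^{-G(\log x)}$ with $G\in\ap$ and $\lim_{y\to\infty}G'(y)e^{-y/2}<\infty$, so a one-sided lower bound $f_t(x)\gtrsim e^{-Cx^{1/(t\delta)}}$ does not by itself plug into it; as in the paper, $G$ must be extracted from the exact asymptotic of \Cref{thm:asymptotics-infinity} (absorbing the flat prefactor), with $\delta<\delta_\phi$ entering only to verify the growth condition and the convergence of $\sum_n\phi^{-t/2}(\bm{t}n)$. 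Note also that determinacy at the endpoint $t=2/\beta_\phi$ comes from part \ref{item-1:thm:threshold-result}, not from the indeterminacy criterion.

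The genuine gap is part \ref{item-2:thm:threshold-result}. You propose to obtain $\scr{T}_\phi\leq 2$ for every $\phi\in\B_d$ by running ``the same density analysis unconditionally with $\gamma=1$'', on the grounds that the drift yields a bounded potential density and hence enough smoothness. Smoothness is indeed available ($\phi\in\B_d$ gives $\mathtt{N}_\phi=\infty$, so $\nu_t\in\mathtt{C}_0^\infty(\R_+)$ for all $t>0$ by \Cref{thm:smoothness-t}\ref{item-2:thm:smoothness-t}), but smoothness is not what the indeterminacy argument needs: it needs the sharp Gaussian-tail asymptotic, equivalently the lower bound $\nu_t(x)\gtrsim e^{-Cx^{1/t}}$, and \Cref{thm:asymptotics-infinity} delivers this only under the hypothesis $\phi^{\mathfrak{t}}\in\B_\J$ for all $\mathfrak{t}\in(0,1)$: its proof rests on \cite[Theorem 5.5]{patie:2015}, which requires the Jurek class, plus the identification $\nu_{\mathfrak{t}}^\phi=\nu_1^{\phi^{\mathfrak{t}}}$ and the Gaussian-tail convolution closure of \cite{balkema:1993}. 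A general $\phi\in\B_d$ need not have a L\'evy measure admitting a density at all, let alone have all its powers in $\B_\J$, so this step is unjustified as stated. The paper closes the gap by a reference-function argument: \Cref{prop:reference-function} shows that for every $\alpha\in(0,1)$ and suitable $\mathfrak{m}$ one has $\frac{\phi}{\phi_{\alpha,\mathfrak{m}}}\in\B$, where $\phi_{\alpha,\mathfrak{m}}(u)=(u+\mathfrak{m})^\alpha$ is a complete Bernstein function; parts \ref{item-4:thm:threshold-result} and \ref{item-3:thm:threshold-result} applied to $\phi_{\alpha,\mathfrak{m}}$ give $\scr{T}_\phi\leq\scr{T}_{\phi_{\alpha,\mathfrak{m}}}\leq 2/\alpha$, and letting $\alpha\uparrow 1$ yields $\scr{T}_\phi\leq 2$. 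You need this detour (or a genuine substitute) to make \ref{item-2:thm:threshold-result} rigorous; the remainder of your argument there --- $\beta_\phi=1$, $\limsup_{u\to\infty}u^{-1}\phi(u)=\sf{d}<\infty$, hence $\scr{T}_\phi\geq 2$ and determinacy of $\nu_2$ from part \ref{item-1:thm:threshold-result} --- is fine.
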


\begin{remark}
\label{rem:cb}
Note that all complete Bernstein functions satisfy the property $\phi^\mathfrak{t} \in \B_\J$ for all $\mathfrak{t} \in (0,1)$. Indeed, writing $\mathbb{H} = \{z \in \C; \: \Im z > 0\}$ for the upper half-plane, we recall that a Bernstein function $\phi$ is said to be a complete if its L\'evy measure $\mu$ has a completely monotone density, or equivalently if $\Im \phi(z) \geq 0$ for all $z \in \mathbb{H}$. Such functions are also sometimes called Pick or Nevanlinna functions in the complex analysis literature. If $\phi$ is a complete Bernstein function, then for $\mathfrak{t} \in (0,1)$ and $z \in \mathbb{H}$,
\begin{equation*}
\Im \phi^\mathfrak{t}(z) = \Im e^{\mathfrak{t}(\log |\phi(z)| + i\arg \phi(z))} = e^{\mathfrak{t}\log |\phi(z)|} \Im e^{i\mathfrak{t} \arg \phi(z)} \geq 0,
\end{equation*}
and hence $\phi^\mathfrak{t}$ is a complete Bernstein function, and in particular its L\'evy measure has a non-increasing density. In particular $u \mapsto (u+\mathfrak{m})^\alpha$ is a complete Bernstein function, for any $\mathfrak{m} \geq 0$, $\alpha \in (0,1)$, and thus $u \mapsto (u+\mathfrak{m})^{\alpha\mathfrak{t}}$ is also a complete Bernstein function, for any $\mathfrak{t} \in (0,1)$. We refer to \cite[Chapter 16]{schilling:2012} for abundant examples of complete Bernstein functions and to \cite[Chapter 6]{schilling:2012} for further details on the theory of complete Bernstein functions; see also \cite{fourati:2011} for some interesting mappings related to complete Bernstein functions.
\end{remark}
\begin{remark}
We mention that for \Cref{item-4:thm:threshold-result} Patie and Savov, see \cite[Proposition 4.4]{patie:2015}, have given sufficient conditions for the ratio of Bernstein functions to remain a Bernstein function, see also Proposition \ref{prop:reference-function} below for another set of sufficient conditions.
\end{remark}

This Theorem is proved in \Cref{subsec:threshold-result-proofs} and the proof makes use of several strategies that will be detailed throughout the rest of the paper.
We proceed by offering some remarks regarding our results in relation to what has been proved in the literature.

First, \Cref{thm:threshold-result}\ref{item-1:thm:threshold-result} provides a generalization of the example provided in \cite{berg:2005} for which the threshold function is infinite. Therein, the author considers the Bernstein function $u \mapsto \frac{u}{u+1}$, for which $\lim_{u \to \infty} \frac{u}{u+1} < \infty$ and therefore trivially $\beta_\phi =0$. However, there exist $\phi \in \B$ such that $\phi(\infty) = \infty$ but $\beta_\phi = 0$, for example the function given, for $u \geq 0$ and any $\lambda > 0$, by
\begin{equation*}
\phi(u) = \log\left(1+\frac{u}{\lambda}\right) = \int_0^\infty (1-e^{-ux}) x^{-1}e^{-\lambda x}dx,
\end{equation*}
which we note is a specific instance of \Cref{ex:2} below. This shows that a Berg-Urbanik semigroup may have unbounded support for all $t > 0$, see~\Cref{thm:smoothness-t}\ref{item-1:thm:smoothness-t} below, but is still completely determinate. Furthermore, in \Cref{thm:threshold-result}\ref{item-1:thm:threshold-result} we provide a condition on $\phi$ that ensures that the lower bound in \eqref{eq:inequality-T} is sharp, in the sense that $\nu_{\scr{T}_\phi}$ is moment determinate. It would be interesting to know what situations can occur when this condition is not fulfilled, in particular if it is possible that $\nu_{\scr{T}_\phi}$ is indeterminate.

In \Cref{thm:threshold-result}\ref{item-2:thm:asymptotics-infinity} we provide an exhaustive claim for the case when $\phi \in \B_d$, thereby generalizing Berg's result that the classical Urbanik semigroup $(e_t)_{t \geq 0}$ is moment determinate if and only if $t \leq 2$, which corresponds to the case $\phi(u) = u$. The proof relies on an application of \Cref{thm:threshold-result}\ref{item-4:thm:threshold-result} to yield the matching upper bound, which shows that $\B_\J$ can serve as a reference class for proving more general estimates. We borrow this idea of using reference objects from~\cite[Section 10]{patie:2015} where the concept of reference semigroups was developed in the context of spectral theory of some non-self-adjoint operators. The fact that one can construct $\phi \in \B$ such that $0 \leq \delta_\phi < \beta_\phi < 1$ shows that the inequality in \eqref{eq:inequality-T} may be far from optimal. Nevertheless, when $\phi \in \B_{\asymp}$, \Cref{thm:threshold-result}\ref{item-3:thm:threshold-result} allows one to classify the behavior of $\scr{T}_\phi$ entirely by the analytical exponent $\beta_\phi$. Finally, as was suggested by an anonymous referee, it is worth emphasizing that for any $T \in (2,\infty)$ there exists a Bernstein function $\phi$ whose associated Berg-Urbanik semigroup has threshold index $\scr{T}_\phi = T$, see e.g.~\Cref{ex:1}.

\subsection{A related moment problem on infinitely divisible moment sequences} \label{subsec:lin-conjecture}

Before we proceed with developing results leading to the proof of \Cref{thm:threshold-result}, we briefly discuss a related moment problem, which requires us to introduce the notion of infinitely divisible moment sequences. A Stieltjes moment sequence $(\mathfrak{m}(n))_{n \geq 0}$ is said to be infinitely divisible if, for any $t > 0$, the sequence $(\mathfrak{m}^t(n))_{n \geq 0}$ is again a Stieltjes moment sequence, and this notion goes back to Tyan who introduced and studied infinitely divisible moment sequences in his thesis \cite{tyan:1975}. By definition, for each $t > 0$, there exists a random variable $X_t$ with moments $(\mathfrak{m}^t(n))_{n \geq 0}$ and it is natural to ask how the moment determinacy of $X_t$ (meaning the moment determinacy of its law) relates to the moment determinacy of $X_1^t$, as a function of $t$. This latter random variable $X_1^t$ is the $t^{th}$-power of a random variable with moments $(\mathfrak{m}(n))_{n \geq 0}$, and it is straightforward that $X_1^t$ has moments given by $(\mathfrak{m}(tn))_{n \geq 0}$. From \Cref{thm:Bernstein-Gamma-Mellin-transform} below it follows that, for any $\phi \in \B$, the moment sequence $(\M_{\nu_1}(n))_{n \geq 0}$ is infinitely divisible and hence Berg-Urbanik semigroups provide a natural setting in which to investigate this question. In what follows we let, for $\phi \in \B$, $X_t(\phi)$ denote the stochastic process whose law at time $t > 0$ is given by $\nu_t^\phi$ and write simply $X(\phi) = X_1(\phi)$, suppressing the dependency on $\phi$ when this causes no confusion.

\begin{theorem}
\label{thm:Lin-conjecture}
Let $\phi \in \B$.
\begin{enumerate}
\item \label{item-1:thm:Lin-conjecture} The random variable $X^t$ is moment determinate for $t < \frac{2}{\beta_\phi}$, and if $\beta_\phi > 0$ and $\limsup_{u \to \infty} u^{-\beta_\phi}\phi(u) < \infty$ then $X^\frac{2}{\beta_\phi}$ is moment determinate.
\end{enumerate}
Moreover, the following hold.
\begin{enumerate}
\addtocounter{enumi}{1} \item \label{item-2:thm:Lin-conjecture} If $\phi \in \B_d$ then $X^t$ is moment determinate if and only if $t \leq 2$.
\item \label{item-3:thm:Lin-conjecture} If $\phi \in \B_{\mc{J}}$ then $X^t$ is moment indeterminate for $t > \frac{2}{\delta_\phi}$. If in addition $\delta_\phi = \beta_\phi$ and~$\limsup_{u \to \infty} u^{-\beta_\phi}\phi(u) < \infty$ then $X^t$ is moment indeterminate if and only if $t > \frac{2}{\beta_\phi}$.
\item \label{item-4:thm:Lin-conjecture} If there exists $\vartheta \in \B$ such that $\frac{\phi}{\vartheta} \in \B$ then, for any $t$ such that $X^t(\vartheta)$ is moment indeterminate, the variable $X^t(\phi)$ is also moment indeterminate.
\end{enumerate}
\end{theorem}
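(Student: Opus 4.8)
The plan is to run the argument for \Cref{thm:threshold-result} a second time, now with the power $X^t=X_1(\phi)^t$, whose moment sequence is
\[
\M_{X^t}(n)=\M_{\nu_1}(tn)=e^{\Psi(tn)},\qquad n\geq 0,
\]
in place of the semigroup moments $\M_{\nu_t}(n)=e^{t\Psi(n)}$. By \Cref{thm:Bernstein-Gamma-Mellin-transform}, together with the crude bounds $u^{\delta_\phi-\epsilon}\lesssim\phi(u)\lesssim u^{\beta_\phi+\epsilon}$, both $\Psi(tn)$ and $t\Psi(n)$ are asymptotic to $(\mathrm{index})\cdot t\,n\log n$ with the same value of the index (lying between $\delta_\phi$ and $\beta_\phi$, and equal to $\beta_\phi$ on $\B_{\asymp}$); this common leading behaviour is exactly what forces the two determinacy thresholds to agree. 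The one algebraic fact I would isolate first is the multiplicativity underlying \Cref{item-4:thm:Lin-conjecture}: if $\rho:=\phi/\vartheta\in\B$, then analytic continuation of the identity $\prod_{k=1}^n\phi(k)=\prod_{k=1}^n\vartheta(k)\prod_{k=1}^n\rho(k)$ gives $\Psi_\phi=\Psi_\vartheta+\Psi_\rho$, hence $X_1(\phi)\overset{d}{=}X_1(\vartheta)\,X_1(\rho)$ with independent factors, and therefore $X^t(\phi)\overset{d}{=}X^t(\vartheta)\diamond X^t(\rho)$. Since $X_1(\rho)$ is a.s.\ positive, \Cref{item-4:thm:Lin-conjecture} is then immediate from \cite[Lemma 2.2 and Remark 2.3]{berg:2004}.

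For \Cref{item-1:thm:Lin-conjecture} I would appeal to Carleman's condition. Fixing $\epsilon>0$ and using $\phi(u)\leq Cu^{\beta_\phi+\epsilon}$ for large $u$, one has $\Psi(u)\leq(\beta_\phi+\epsilon)\,u\log u+\mathrm{O}(u)$ as $u\to\infty$, so $\M_{X^t}(n)^{-1/(2n)}\geq c\,n^{-(\beta_\phi+\epsilon)t/2}$ and $\sum_{n\geq1}\M_{X^t}(n)^{-1/(2n)}=\infty$ whenever $(\beta_\phi+\epsilon)t\leq 2$. Letting $\epsilon\downarrow0$ gives moment determinacy of $X^t$ for $t<\frac{2}{\beta_\phi}$; when $\limsup_{u\to\infty}u^{-\beta_\phi}\phi(u)<\infty$ one may take $\epsilon=0$, which covers $t=\frac{2}{\beta_\phi}$ as well, and when $\beta_\phi=0$ the Carleman sum diverges for every $t$.

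\Cref{item-3:thm:Lin-conjecture} is the substantive part. Its determinacy half (for $t\leq\frac{2}{\beta_\phi}$, when $\delta_\phi=\beta_\phi$) is already \Cref{item-1:thm:Lin-conjecture}, so the work is to prove that $X^t$ is moment indeterminate for $t>\frac{2}{\delta_\phi}$ when $\phi\in\B_\J$, and for this I would re-use the machinery behind \Cref{thm:threshold-result}\ref{item-3:thm:threshold-result}: the exact (necessary and sufficient) Abelian criterion of \cite[Theorem 1.2]{patie:2018a}, applied to the density of $X^t=X_1(\phi)^t$. The density $f_{X_1(\phi)}$, its smoothness, and its precise large-$x$ asymptotics — of stretched-exponential (Weibull) type, with shape dictated by the $n\log n$-coefficient above — are provided by the companion results \Cref{thm:smoothness-t} and the Gaussian-tail estimates drawn from \cite{balkema:1993,patie:2015}; the density of $X^t$ is then $\tfrac1t x^{1/t-1}f_{X_1(\phi)}(x^{1/t})$ via the change of variables $x\mapsto x^{1/t}$, which simply rescales the governing tail shape by the factor $1/t$. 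Inserting this into \cite[Theorem 1.2]{patie:2018a}, whose borderline is shape $1/2$, locates the transition: for $\phi\in\B_\J$ the lower index $\delta_\phi$ controls the relevant lower bound on the tail and yields indeterminacy once $t>\frac{2}{\delta_\phi}$, while under the extra hypotheses $\delta_\phi=\beta_\phi$ and $\limsup_{u\to\infty}u^{-\beta_\phi}\phi(u)<\infty$ the transition is pinned to $t=\frac{2}{\beta_\phi}$.

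Finally, for \Cref{item-2:thm:Lin-conjecture}: if $\phi\in\B_d$ then $\phi(u)/u\to\sf{d}>0$, so $\delta_\phi=\beta_\phi=1$ and $\limsup_{u\to\infty}u^{-1}\phi(u)=\sf{d}<\infty$, whence determinacy for $t\leq2$ is \Cref{item-1:thm:Lin-conjecture} with $\beta_\phi=1$. For indeterminacy when $t>2$ I would, mirroring the proof of \Cref{thm:threshold-result}\ref{item-2:thm:threshold-result}, invoke \Cref{prop:reference-function} (see also \cite[Proposition 4.4]{patie:2015}) to produce a reference $\vartheta\in\B$ with $\vartheta^{\mathfrak t}\in\B_\J$ for all $\mathfrak t\in(0,1)$, $\delta_\vartheta=1$, and $\phi/\vartheta\in\B$; then \Cref{item-3:thm:Lin-conjecture} applied to $\vartheta$ gives indeterminacy of $X^t(\vartheta)$ for $t>2$, which transfers to $\phi$ via \Cref{item-4:thm:Lin-conjecture}. (Equivalently, since $\phi(u)\sim\sf{d}u$ the density of $X^t(\phi)$ has the same tail shape as that of $X^t(\sf{d}\cdot\mathrm{id})$, namely a rescaled Weibull density of shape $1/t$, which is indeterminate precisely for $t>2$, and the ``iff'' in \cite[Theorem 1.2]{patie:2018a} then closes the argument.) The step I expect to be the genuine obstacle is \Cref{item-3:thm:Lin-conjecture}: extracting the large-$x$ asymptotics of $f_{X_1(\phi)}$, and hence of the density of $X^t$, with enough precision — the right exponent \emph{together with} its polynomial prefactor, uniformly in $t$ over the relevant range — to apply \cite[Theorem 1.2]{patie:2018a}, and in particular to settle the borderline value $t=\frac{2}{\beta_\phi}$; a secondary delicate point is verifying that a reference function as in \Cref{item-2:thm:Lin-conjecture} can be extracted from an arbitrary $\phi\in\B_d$.
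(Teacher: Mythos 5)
Your overall route coincides with the paper's: \Cref{item-1:thm:Lin-conjecture} via Carleman's criterion using $\phi(n)\stackrel{\infty}{=}\bigO(n^{\beta+\epsilon})$ (the paper phrases this through the asymptotics of $W_\phi(tn+1)$ from \cite[Theorem 5.1(2)]{patie:2015}); \Cref{item-4:thm:Lin-conjecture} via the factorization of the moment sequence $W_\phi(tn+1)=W_{\phi/\vartheta}(tn+1)W_\vartheta(tn+1)$ together with \cite[Lemma 2.2 and Remark 2.3]{berg:2004}; \Cref{item-3:thm:Lin-conjecture} via the change of variables $\sigma_t(x)=\frac1t x^{\frac{1-t}{t}}\nu_1(x^{\frac1t})$, the large-$x$ asymptotics of the \emph{time-one} density (which only needs $\phi\in\B_\J$, via \cite[Theorem 5.5]{patie:2015} — be careful not to invoke \Cref{thm:asymptotics-infinity}, which needs $\phi^{\mathfrak t}\in\B_\J$ for all $\mathfrak t$), and the Abelian criterion of \cite[Theorem 1.2]{patie:2018a}; and \Cref{item-2:thm:Lin-conjecture} via a reference Bernstein function and transfer through \Cref{item-4:thm:Lin-conjecture}.

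The one place where your argument, as written, does not go through is \Cref{item-2:thm:Lin-conjecture}. \Cref{prop:reference-function} only supplies references $\phi_{\alpha,\mathfrak m}(u)=(u+\mathfrak m)^\alpha$ with $\alpha\in(0,1)$ strictly, hence $\delta_{\phi_{\alpha,\mathfrak m}}=\alpha<1$; there is no $\vartheta$ with $\delta_\vartheta=1$ coming out of that proposition, so you cannot conclude indeterminacy of $X^t(\vartheta)$ for all $t>2$ from a single reference. The correct fix — which is what the paper does, and what ``mirroring \Cref{thm:threshold-result}\ref{item-2:thm:threshold-result}'' should mean concretely — is to apply \Cref{item-3:thm:Lin-conjecture} to $\phi_{\alpha,\mathfrak m}$ for each fixed $\alpha\in(0,1)$ (it is a complete Bernstein function, hence in $\B_\J$, with $\delta=\beta=\alpha$ and finite limsup), transfer indeterminacy of $X^t(\phi_{\alpha,\mathfrak m})$ for $t>\frac2\alpha$ to $X^t(\phi)$ via \Cref{item-4:thm:Lin-conjecture}, and then let $\alpha\uparrow1$ to cover all $t>2$. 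Your parenthetical alternative is also not valid for a general $\phi\in\B_d$: applying \cite[Theorem 1.2]{patie:2018a} directly to the density of $X^t(\phi)$ requires the precise tail asymptotics of $\nu_1$, which are only available under the Jurek-class hypothesis $\phi\in\B_\J$; membership in $\B_d$ alone does not give them (nor even, a priori, a density with the required form), so the reference-function detour is genuinely needed there.
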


This Theorem is proved in \Cref{subsec:lin-conjecture-proofs}. While \Cref{thm:threshold-result} concerns the $t$-dependent moment determinacy of the process $(X_t)_{t \geq  0}$, \Cref{thm:Lin-conjecture} is the analogous result regarding the moment determinacy of $X^t$, or equivalently of the sequence $(\M_{\nu_1}(tn))_{n \geq 0}$. Note that the conditions in \Cref{thm:Lin-conjecture}\ref{item-3:thm:Lin-conjecture} are weaker than those in \Cref{thm:threshold-result}\ref{item-3:thm:threshold-result}, which shows that the log-L\'evy moment problem is the harder of the two moment problems. In \cite{lin:2017} Lin stated the following conjecture regarding the moment determinacy of infinitely divisible moment sequences.

\begin{conjecture}[Conjecture 1 in \cite{lin:2017}]
Let $(X_t)_{t \geq 0}$ be a stochastic process such that $(\E[X_t^n])_{n \geq 0} = (\mathfrak{m}^t(n))_{n \geq 0}$, i.e.~$(\mathfrak{m}(n))_{n\geq 0}$ is an infinitely divisible moment sequence. Then $X_t$ is moment determinate if and only if $X_1^t$ is moment determinate.
\end{conjecture}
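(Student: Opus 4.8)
The plan is to prove Lin's conjecture \emph{for the Berg--Urbanik family}, that is, to show that for every $\phi \in \B$ and every $t > 0$ the variable $X_t$ is moment determinate if and only if $X^t$ is; equivalently, that the threshold index $\scr{T}_\phi$ of \Cref{thm:threshold-result} equals $\scr{S}_\phi \in [0,\infty]$, the infimum of the times $t$ at which $X^t$ is moment indeterminate. Since $X_t$ has moments $(e^{t\Psi(n)})_{n \geq 0}$ while $X^t = X_1^t$ has moments $(\M_{\nu_1}(tn))_{n\geq0} = (e^{\Psi(tn)})_{n \geq 0}$, the statement reduces to comparing the moment problems for the two one-parameter families of moment sequences $(e^{t\Psi(n)})_n$ and $(e^{\Psi(tn)})_n$.

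\emph{Step 1: the two densities share a tail profile.} By a Mellin-inversion/saddle-point argument, of the kind that yields the large-$x$ density asymptotics established in this paper (see \Cref{thm:smoothness-t}), the density $f_t$ of $\nu_t$ satisfies $-\log f_t(x) \sim (t\Psi)^*(\log x)$ as $x \to \infty$, where $(\cdot)^*$ denotes Legendre conjugation; using the elementary identity $(t\Psi)^*(\ell) = t\,\Psi^*(\ell/t)$ this reads $-\log f_t(x) \sim t\,\Psi^*\!\big(\tfrac{\log x}{t}\big)$. On the other hand $X^t$ is the push-forward of $X_1$ under $x \mapsto x^t$, so it has density $g_t(y) = \tfrac{1}{t}\,y^{1/t-1} f_1(y^{1/t})$, and since $-\log f_1(x) \sim \Psi^*(\log x)$ the polynomial prefactor is negligible, giving $-\log g_t(y) \sim \Psi^*\!\big(\tfrac{\log y}{t}\big)$. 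Hence
\[
-\log f_t(x)\ \sim\ t\,\big(-\log g_t(x)\big), \qquad x \to \infty,
\]
so the two densities carry the \emph{same} tail profile $\Psi^*\!\big(\tfrac{\log x}{t}\big)$, differing only by the overall positive factor $t$.

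\emph{Step 2: applying the Abelian criterion.} Apply the non-classical Abelian-type criterion of \cite[Theorem 1.2]{patie:2018a} to $f_t$ and to $g_t$. This criterion is a necessary and sufficient test for moment indeterminacy of a density of this exponential/Gaussian type, phrased through the growth of $-\log$ of the density, and it is homogeneous: replacing a density by one whose $-\log$ is asymptotically a fixed positive multiple of the original does not alter the verdict --- this is already visible for Krein's condition, whose defining integral $\int_0^\infty \frac{-\log f(x)}{\sqrt{x}\,(1+x)}\,dx$ is merely scaled, and it persists for the refined criterion, which sees only the profile $\Psi^*$. By Step 1 the profiles of $f_t$ and $g_t$ agree up to such a constant, so $f_t$ is moment indeterminate exactly when $g_t$ is; therefore $X_t$ is moment determinate precisely when $X^t$ is, and in particular $\scr{T}_\phi = \scr{S}_\phi$. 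In this way \Cref{thm:threshold-result} and \Cref{thm:Lin-conjecture} become two readings of a single computation, consistent with the bounds in \eqref{eq:inequality-T} and in \Cref{thm:Lin-conjecture}\ref{item-3:thm:Lin-conjecture} coinciding numerically.

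\emph{Main obstacle.} The delicate case is $\phi \notin \B_{\asymp}$, i.e.\ $\delta_\phi < \beta_\phi$: then $\Psi^*$ oscillates on a logarithmic scale, and one must check that the homogeneity used in Step 2 survives this oscillation, so that the horizontal rescaling $\ell \mapsto \ell/t$ --- which, after removing the constant factor, is what separates $g_t$ from $f_t$ --- cannot flip the outcome of the criterion. Concretely this asks for the absence of any resonance between the oscillation of $s \mapsto \Psi(s)/(s\log s)$ and the dilation $s \mapsto ts$, e.g.\ that $\limsup_{n}\Psi(tn)/(n\log n) = t\,\limsup_n \Psi(n)/(n\log n)$ together with the analogue at the finer resolution the Abelian criterion detects; this should hold because $\{\log(tn) : n \in \N\}$ is merely a translate of $\{\log n : n \in \N\}$, which is too densely spread to be evaded by a translation. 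Controlling the Abelian criterion for moment sequences whose logarithmic growth genuinely oscillates --- a regime in which Carleman's and Krein's tests are simultaneously inconclusive for \emph{both} sequences --- is where the bulk of the work lies, and the density-asymptotics estimates developed in this paper are what make it tractable; should the analysis instead uncover such a resonance, it would exhibit a counterexample to the conjecture, necessarily among oscillating $\phi$.
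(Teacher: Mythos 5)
Your proposal does not close the statement: in the paper this remains a conjecture, and what is actually proved is \Cref{cor:Lin-conjecture}, i.e.\ Lin's equivalence only for Berg--Urbanik semigroups satisfying one of three extra hypotheses ($\beta_\phi=0$, or $\phi\in\B_d$, or $\phi\in\B_{\asymp}$ with $\phi^{\mathfrak{t}}\in\B_\J$ for all $\mathfrak{t}\in(0,1)$, $\beta_\phi>0$ and $\limsup_{u\to\infty}u^{-\beta_\phi}\phi(u)<\infty$), obtained by combining \Cref{thm:threshold-result} and \Cref{thm:Lin-conjecture}. Your Step 1 already assumes more than is available: the log-asymptotics $-\log f_t(x)\sim t\,\Psi^*(\log x/t)$ for the density of $\nu_t$ are established in \Cref{thm:asymptotics-infinity} (not \Cref{thm:smoothness-t}, which you cite and which only gives existence/smoothness), and they require $\phi(\infty)=\infty$ together with $\phi^{\mathfrak{t}}\in\B_\J$ for all $\mathfrak{t}\in(0,1)$; a saddle-point derivation valid for general $\phi\in\B$ is explicitly flagged in the paper as an open direction, not a known tool. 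So the ``shared tail profile'' is only known on the same subclass the paper already treats.

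The more serious gap is Step 2. The ``homogeneity'' of the criterion is only meaningful where the criterion applies: \Cref{thm:Abelian-theorem} is an equivalence under the hypotheses that $\nu_t$ has a density comparable to $e^{-G(\log x)}$ with $G\in\ap$ \emph{and} $\lim_{y\to\infty}G'(y)e^{-y/2}<\infty$. Here $G'(y)$ behaves like $\varphi(e^{y/t})$, so this last condition forces $t$ to be at least of order $2/\delta_\phi$, while the Carleman-type argument used for determinacy (via the moment asymptotics of $W_\phi$) only covers $t\le 2/\beta_\phi$. When $\delta_\phi<\beta_\phi$ --- precisely your ``main obstacle'' --- the window $t\in(2/\beta_\phi,2/\delta_\phi)$ is reached by neither tool, for $X_t$ nor for $X^t$, and for oscillating $\varphi$ the limit $G'(y)e^{-y/2}$ need not exist, so the criterion is silent and there is no verdict to transport from $g_t$ to $f_t$; your appeal to Krein's condition does not help since Krein is sufficient but not necessary. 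Where the criterion does apply, your rescaling observation is correct (the two series $\sum_n\phi^{-t/2}(n)$ and $\sum_n\phi^{-t/2}(tn)$ converge or diverge together by monotonicity and the integral test), but that is exactly why \Cref{thm:threshold-result}\ref{item-3:thm:threshold-result} and \Cref{thm:Lin-conjecture}\ref{item-3:thm:Lin-conjecture} produce matching thresholds --- i.e.\ filling in the details of your plan reproduces \Cref{cor:Lin-conjecture} and nothing more. The closing heuristic about translates of $\{\log n\}$ being ``too densely spread'' is not an argument, and you yourself concede a counterexample might emerge there. Hence the proposal neither proves the conjecture for all Berg--Urbanik semigroups, nor, a fortiori, in the stated generality of arbitrary infinitely divisible moment sequences.
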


As a corollary of \Cref{thm:threshold-result,thm:Lin-conjecture} we get an affirmative answer to Lin's conjecture for a subclass of Berg-Urbanik semigroups.

\begin{corollary}
\label{cor:Lin-conjecture}
Let $\phi \in \B$ and suppose that any of the following conditions are satisfied:
\begin{enumerate}[label=(\roman*)]
\item $\beta_\phi = 0$,
\item \label{item-2:cor:Lin-conjecture} $\phi \in \B_d$,
\item $\phi \in \B_{\asymp}$ with $\phi^\mathfrak{t} \in \B_\J$ for all $\mathfrak{t} \in (0,1)$,  $\beta_\phi > 0$ and $\limsup_{u \to \infty} u^{-\beta_\phi} \phi(u) < \infty$.
\end{enumerate}
Then Lin's conjecture holds.
\end{corollary}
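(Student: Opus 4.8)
The plan is to read Lin's conjecture, for the Bernstein function $\phi$ at hand, as the assertion that the two sets $\{t>0:\ X_t\text{ is moment determinate}\}$ and $\{t>0:\ X^t\text{ is moment determinate}\}$ coincide, and to verify in each of the three cases that both sets equal one and the same interval $(0,\tau]$, $\tau\in(0,\infty]$ (under the convention $(0,\infty]=(0,\infty)$). For $(X_t)_{t\geq0}$ the first set is automatically a down-interval — this is the threshold property of $\scr{T}_\phi$ recalled right after its definition — namely $(0,\scr{T}_\phi)$, or $(0,\scr{T}_\phi]$ exactly when $\nu_{\scr{T}_\phi}$ is moment determinate. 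So the proof amounts to extracting $\scr{T}_\phi$ together with the status of $\nu_{\scr{T}_\phi}$ from \Cref{thm:threshold-result}, and matching these against the corresponding part of \Cref{thm:Lin-conjecture}.

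For case (i), $\beta_\phi=0$: \Cref{thm:threshold-result}\ref{item-1:thm:threshold-result} forces $\scr{T}_\phi\geq2/\beta_\phi=\infty$, so $X_t$ is moment determinate for every $t>0$, and \Cref{thm:Lin-conjecture}\ref{item-1:thm:Lin-conjecture} with $2/\beta_\phi=\infty$ gives the same for $X^t$; both sets equal $(0,\infty)$. For case (ii), $\phi\in\B_d$: \Cref{thm:threshold-result}\ref{item-2:thm:threshold-result} gives $\scr{T}_\phi=2$ and moment determinacy of $\nu_2$, so $X_t$ is moment determinate iff $t\leq2$, which is precisely what \Cref{thm:Lin-conjecture}\ref{item-2:thm:Lin-conjecture} asserts for $X^t$.

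Case (iii) requires a small extra argument. Under the stated hypotheses — $\phi\in\B_{\asymp}$ (so $\delta_\phi=\beta_\phi$), $\phi^\mathfrak{t}\in\B_\J$ for all $\mathfrak{t}\in(0,1)$, $\beta_\phi>0$, and $\limsup_{u\to\infty}u^{-\beta_\phi}\phi(u)<\infty$ — parts \ref{item-3:thm:threshold-result} and \ref{item-1:thm:threshold-result} of \Cref{thm:threshold-result} give $\scr{T}_\phi=2/\beta_\phi$ and moment determinacy of $\nu_{2/\beta_\phi}$, so $X_t$ is moment determinate iff $t\leq2/\beta_\phi$. For $X^t$, \Cref{thm:Lin-conjecture}\ref{item-1:thm:Lin-conjecture} already yields moment determinacy for $t\leq2/\beta_\phi$, so only moment indeterminacy for $t>2/\beta_\phi$ remains. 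Since I have assumed merely $\phi^\mathfrak{t}\in\B_\J$ rather than $\phi\in\B_\J$, I would not invoke \Cref{thm:Lin-conjecture}\ref{item-3:thm:Lin-conjecture} for $\phi$ itself; instead, for a fixed $\mathfrak{t}\in(0,1)$ I would note $\phi/\phi^\mathfrak{t}=\phi^{1-\mathfrak{t}}\in\B$ and apply \Cref{thm:Lin-conjecture}\ref{item-4:thm:Lin-conjecture} with $\vartheta=\phi^\mathfrak{t}$ to transfer moment indeterminacy from $X^t(\phi^\mathfrak{t})$ to $X^t(\phi)$; then \Cref{thm:Lin-conjecture}\ref{item-3:thm:Lin-conjecture} applied to $\phi^\mathfrak{t}\in\B_\J$, together with the scaling relation $\delta_{\phi^\mathfrak{t}}=\mathfrak{t}\,\delta_\phi$, gives moment indeterminacy of $X^t(\phi)$ for all $t>2/(\mathfrak{t}\,\delta_\phi)$; letting $\mathfrak{t}\uparrow1$ and using $\delta_\phi=\beta_\phi$ gives moment indeterminacy of $X^t$ for every $t>2/\beta_\phi$. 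Thus $X^t$ is moment determinate iff $t\leq2/\beta_\phi$, matching $X_t$, and the conjecture follows.

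I expect the only nonroutine point to be this last transfer in case (iii): passing from the $\B_\J$-based indeterminacy statement for $\phi^\mathfrak{t}$ to the one for $\phi$ via the factorization $\phi=\phi^\mathfrak{t}\cdot\phi^{1-\mathfrak{t}}$ and the limit $\mathfrak{t}\uparrow1$. It rests on the two elementary facts that $\phi^{1-\mathfrak{t}}$ is again a Bernstein function and that $\delta_{\phi^\mathfrak{t}}=\mathfrak{t}\,\delta_\phi$, both immediate from the relevant definitions. An alternative would be to first show that $\phi^\mathfrak{t}\in\B_\J$ for all $\mathfrak{t}\in(0,1)$ already forces $\phi\in\B_\J$ — by extracting a non-increasing density for the L\'evy measure of $\phi$ from the vague limit of the non-increasing densities of $\phi^\mathfrak{t}$ as $\mathfrak{t}\uparrow1$ — and then apply \Cref{thm:Lin-conjecture}\ref{item-3:thm:Lin-conjecture} to $\phi$ directly; the factorization route is shorter and self-contained given the stated theorems. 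Everything else is bookkeeping between \Cref{thm:threshold-result} and \Cref{thm:Lin-conjecture}.
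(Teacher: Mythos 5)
Your proposal is correct, and for cases (i) and (ii) it is exactly the argument the paper intends: the corollary is stated without an explicit proof, as an immediate combination of \Cref{thm:threshold-result} and \Cref{thm:Lin-conjecture}, and your bookkeeping (threshold property of $\scr{T}_\phi$ for the $X_t$ side, the matching items of \Cref{thm:Lin-conjecture} for the $X^t$ side) is precisely that combination. Where you genuinely add something is case (iii): read literally, \Cref{thm:Lin-conjecture}\ref{item-3:thm:Lin-conjecture} requires $\phi\in\B_\J$, whereas the corollary only assumes $\phi^\mathfrak{t}\in\B_\J$ for all $\mathfrak{t}\in(0,1)$, and the paper leaves this mismatch unaddressed (one could close it by showing that the Jurek class is stable under the limit $\phi^\mathfrak{t}\to\phi$ as $\mathfrak{t}\uparrow1$, essentially your "alternative" route). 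Your factorization argument -- $\phi/\phi^\mathfrak{t}=\phi^{1-\mathfrak{t}}\in\B$ since $u\mapsto u^{1-\mathfrak{t}}$ composed with $\phi$ is Bernstein, then \Cref{thm:Lin-conjecture}\ref{item-4:thm:Lin-conjecture} with $\vartheta=\phi^\mathfrak{t}$, \Cref{thm:Lin-conjecture}\ref{item-3:thm:Lin-conjecture} applied to $\phi^\mathfrak{t}$, the elementary scaling $\delta_{\phi^\mathfrak{t}}=\mathfrak{t}\delta_\phi$, and finally $\mathfrak{t}\uparrow1$ together with $\delta_\phi=\beta_\phi$ -- is a clean and self-contained way to get indeterminacy of $X^t(\phi)$ for all $t>2/\beta_\phi$ using only results stated in the paper; combined with \Cref{thm:Lin-conjecture}\ref{item-1:thm:Lin-conjecture} at and below the threshold, and with \Cref{thm:threshold-result}\ref{item-1:thm:threshold-result} and \ref{item-3:thm:threshold-result} on the $X_t$ side, it settles case (iii) correctly. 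In short: same overall strategy as the paper, with a worthwhile explicit patch where the paper's hypotheses and its quoted theorem do not line up verbatim.
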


We point out that recently Berg \cite{berg:2018} proved a related conjecture by Lin (Conjecture 2 in \cite{lin:2017}) concerning the moment sequence $(\Gamma(tn+1))_{n \geq 0}$, which among other things confirms Lin's conjecture (Conjecture 1) for this particular example. Note that the moment sequence $(\Gamma(tn+1))_{n \geq 0}$ corresponds to the Bernstein function $\phi(u) = u$, which falls under the assumption \ref{item-2:cor:Lin-conjecture} in \Cref{cor:Lin-conjecture}.

\subsection{A new Mellin transform representation in terms of Bernstein-Gamma functions} \label{subsec:Mellin}

The proof of \Cref{thm:threshold-result} relies on several intermediate results that are of independent interests. The first one is an alternative representation of $\M_{\nu_t}$. For $a \in \R$ we let $\C_{(a,\infty)} = \{z \in \C; \: \Re(z) > a \}$ and then write $\A_{(a,\infty)}$ for the set of analytic functions on $\C_{(a,\infty)}$. Recall that a function $f:i\R \to \C$ is said to be positive-definite if, for any $s_1,\ldots,s_n \in i\R$ and $z_1,\ldots,z_n \in \C$, $\sum_{i,j=1}^n f(s_i-s_j)z_i\bar{z}_j \geq 0$.

Next, for any $\phi \in \B$ we let $W_\phi:\C_{(0,\infty)} \to \C$ denote the so-called \emph{Bernstein-Gamma function} associated to $\phi$, which is given by
\begin{equation}
\label{eq:generalized-Weierstrass-product}
W_\phi(z) = \frac{e^{-\gamma_\phi z}}{\phi(z)} \prod_{k=1}^\infty \frac{\phi(k)}{\phi(k+z)} e^{\frac{\phi'(k)}{\phi(k)}z}
\end{equation}
where the infinite product is absolutely convergent on at least $\C_{(0,\infty)}$, and
\begin{equation*}
\gamma_\phi = \lim_{n \to \infty} \left( \sum_{k=1}^n \frac{\phi'(k)}{\phi(k)} - \log \phi(n)\right) \in \left[ - \log \phi(1), \frac{\phi'(1)}{\phi(1)}-\log\phi(1) \right].
\end{equation*}
This function, as defined in \eqref{eq:generalized-Weierstrass-product} on $\R_+$ was introduced and studied by Webster \cite{webster:1997}, and was extended (at least) to  $\C_{(0,\infty)}$ by Patie and Savov who introduced the terminology and  studied their analytical properties, such as uniform decay along imaginary lines, in the works \cite[Chapter 6]{patie:2015} and \cite{patie:2016}. The product in \eqref{eq:generalized-Weierstrass-product} can be thought of as a generalized Weierstrass product, as it generalizes the classical Weierstrass product representation for the  gamma function. Indeed, this case can be recovered by setting $\phi(u) = u$, in which case $\gamma_\phi$ boils down to the Euler-Mascheroni constant. Furthermore, $W_\phi$ is the unique positive-definite function that solves the functional equation
\begin{equation*}
W_\phi(z+1) = \phi(z)W_\phi(z), \quad W_\phi(1) = 1,
\end{equation*}
valid for at least $z \in \C_{(0,\infty)}$, see~\cite[Theorem 6.1(3)]{patie:2015}. Write $\Log$ for the branch of the complex logarithm that is analytic on the slit plane $\C \setminus (-\infty,0]$ and satisfies $\Log 1 = 0$, commonly referred to as the principal branch. We use it to define, for $t > 0$ and $z \in \C_{(0,\infty)}$,
\begin{equation*}
W_\phi^t(z) = e^{t\Log W_\phi(z)},
\end{equation*}
as well as $\phi^t(z) = e^{t\Log \phi(z)}$.

\begin{theorem}
\label{thm:Bernstein-Gamma-Mellin-transform}
Let $\phi \in \B$ and let $(\nu_t)_{t \geq 0}$ be the corresponding Berg-Urbanik semigroup. Then, for $t > 0$,
\begin{equation}
\label{eq:Mellin transform equation}
\M_{\nu_t}(z) = \int_0^\infty x^{z} \nu_t(dx) = W_\phi^t(z+1), \quad \Re(z) > -1,
\end{equation}
where $W_\phi:\C_{(0,\infty)} \to \C$ is the Bernstein-Gamma function associated to $\phi$. Moreover, $W_\phi^t \in \A_{(0,\infty)}$ and $W_\phi$ is the unique positive-definite function that solves, for all $t > 0$, the functional equation,
\begin{equation}
\label{eq:functional equation of W_phi^t}
W_\phi^t(z+1) = \phi^t(z) W_\phi^t(z), \quad W_\phi^t(1) =1,
\end{equation}
valid for $z \in \C_{(0,\infty)}$.
\end{theorem}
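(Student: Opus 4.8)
The plan is to prove \eqref{eq:Mellin transform equation} first and then deduce the remaining structural claims from properties of $W_\phi$ already available in \cite{patie:2015,patie:2016}. First I would establish that both sides of \eqref{eq:Mellin transform equation} agree at the positive integers: by \Cref{def:Urbanik semigroup} we have $\M_{\nu_t}(n) = \left(\prod_{k=1}^n \phi(k)\right)^t$ for $n \in \N$, while iterating the functional equation $W_\phi(z+1) = \phi(z)W_\phi(z)$ with $W_\phi(1)=1$ gives $W_\phi(n+1) = \prod_{k=1}^n \phi(k)$, and since all factors are positive the $t$-th powers (defined via $\Log$) match termwise. So $\M_{\nu_t}(n) = W_\phi^t(n+1)$ for every $n \in \N_0$.

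Next I would upgrade this from the integers to the half-plane $\Re(z) > -1$ by a Carlson-type / analyticity argument. The function $z \mapsto \M_{\nu_t}(z) = e^{t\Psi(z)}$ is analytic on $\C_{(0,\infty)}$ (in fact the excerpt notes $\M_{\nu_t}$ extends analytically to the right half-plane, by \cite[Theorem 2.2]{berg:2007}), and $z \mapsto W_\phi^t(z+1)$ is analytic on $\C_{(-1,\infty)}$ by the convergence of the generalized Weierstrass product \eqref{eq:generalized-Weierstrass-product} on $\C_{(0,\infty)}$ together with the definition $W_\phi^t = e^{t\Log W_\phi}$, which is legitimate since $W_\phi$ is zero-free and maps into the slit plane on $\C_{(0,\infty)}$ (being positive-definite with $W_\phi(1)=1$, it is positive on $\R_+$, and the product has no zeros). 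Both functions satisfy the same first-order recurrence — $\M_{\nu_t}(z+1) = e^{t\Psi(z+1)} = \phi^t(z)\,e^{t\Psi(z)} = \phi^t(z)\M_{\nu_t}(z)$ using that $\Psi(z+1)-\Psi(z) = \log\phi(z)$ on $\R_+$ and hence, by analytic continuation, $=\Log\phi(z)$ on $\C_{(0,\infty)}$ — so their ratio $z\mapsto \M_{\nu_t}(z)/W_\phi^t(z+1)$ is a $1$-periodic analytic function on $\C_{(0,\infty)}$ that equals $1$ on $\N_0$. To conclude it is identically $1$ I would invoke the uniqueness statement in \cite[Theorem 6.1(3)]{patie:2015}: $W_\phi$ is characterized as the \emph{unique} positive-definite solution of $W(z+1)=\phi(z)W(z)$, $W(1)=1$. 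For this I need that $z \mapsto \M_{\nu_t}(z+1)$ restricted to $i\R$ is positive-definite — but this is immediate because it is the Mellin transform of the positive measure $x\,\nu_t(dx)$ (or, for the raw statement, $\M_{\nu_t}$ on $i\R$ is the characteristic function of the pushforward of $\nu_t$ under $x\mapsto \log x$, hence positive-definite by Bochner). Applying the cited uniqueness after extracting a $t$-th root gives $\M_{\nu_t}(z) = W_\phi^t(z+1)$ on $\C_{(0,\infty)}$, and then on all of $\C_{(-1,\infty)}$ by analytic continuation of both sides.

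For the remaining assertions: analyticity $W_\phi^t \in \A_{(0,\infty)}$ is exactly the zero-freeness-plus-product-convergence observation above, and it also follows a posteriori from $W_\phi^t(z+1) = e^{t\Psi(z)}$. The functional equation \eqref{eq:functional equation of W_phi^t} follows by raising $W_\phi(z+1)=\phi(z)W_\phi(z)$ to the $t$-th power via $\Log$, taking care that $\Log(\phi(z)W_\phi(z)) = \Log\phi(z) + \Log W_\phi(z)$ holds on $\C_{(0,\infty)}$ because both $\phi$ and $W_\phi$ map $\C_{(0,\infty)}$ into the right half-plane (hence into the slit plane, with arguments in $(-\pi/2,\pi/2)$ so the arguments add without crossing the branch cut) — $\phi$ maps into the right half-plane since it is Bernstein, and $W_\phi$ likewise, a fact recorded in \cite{patie:2016}. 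The normalization $W_\phi^t(1) = e^{t\Log W_\phi(1)} = e^{t\Log 1} = 1$ is trivial. Uniqueness among positive-definite solutions of \eqref{eq:functional equation of W_phi^t} reduces, by taking $t=1$ (or extracting roots), to the scalar uniqueness in \cite[Theorem 6.1(3)]{patie:2015}.

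The main obstacle I anticipate is the branch-cut bookkeeping for the complex powers: ensuring that $\Log W_\phi(z)$ and $\Log \phi(z)$ are well-defined and that the identity $\Log(ab)=\Log a + \Log b$ is valid throughout $\C_{(0,\infty)}$, which hinges on the nontrivial input — borrowed from \cite{patie:2015,patie:2016} — that $W_\phi$ is zero-free and has range in the open right half-plane on $\C_{(0,\infty)}$. A secondary point requiring care is justifying the passage from "agreement on $\N_0$" to "agreement everywhere": rather than leaning on Carlson's theorem (which would need growth control on $\M_{\nu_t}$ along imaginary lines, available from the decay estimates in \cite{patie:2015} but cumbersome), the cleanest route is the positive-definiteness/uniqueness argument sketched above, which sidesteps growth hypotheses entirely.
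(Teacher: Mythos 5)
The crux you yourself flagged is where the argument breaks: the input you borrow --- that $W_\phi$ is zero-free \emph{and maps $\C_{(0,\infty)}$ into the open right half-plane} --- is only half true. Zero-freeness does follow from the absolutely convergent product \eqref{eq:generalized-Weierstrass-product}, but the half-plane claim is false and is not recorded in \cite{patie:2016} (what is recorded there is $\Re\phi(z)>0$, i.e.\ the statement for $\phi$, not for $W_\phi$). Already for $\phi(u)=u$ one has $W_\phi=\Gamma$, and by Stirling $\arg\Gamma(1+ib)$ grows like $b\log b$, so $\Gamma(1+ib)$ hits the negative half-line for arbitrarily large $|b|$. Consequently the two steps you rest on this claim fail: $\Log W_\phi(z)$, read as the principal branch applied to the value $W_\phi(z)$, is not defined (let alone analytic) at such points, and the additivity $\Log(\phi(z)W_\phi(z))=\Log\phi(z)+\Log W_\phi(z)$ you use for \eqref{eq:functional equation of W_phi^t} can fail by $2\pi i$; for non-integer $t$ the function $e^{t\Log W_\phi(z)}$ with that reading would even be discontinuous across the crossings, while $\M_{\nu_t}$ is continuous. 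The repair is to take for $\log W_\phi$ the analytic determination on the simply connected domain $\C_{(0,\infty)}$ normalized by $\log W_\phi(1)=0$ (this is what the convergent series built from \eqref{eq:generalized-Weierstrass-product} produces, and it is the reading under which the theorem holds). With that fix your strategy does close: by the $t=1$ identity $\M_{\nu_1}(z)=W_\phi(z+1)$ of \cite[Theorem 6.1]{patie:2015}, $\Psi$ and $\log W_\phi(\cdot+1)$ are two analytic logarithms of the same zero-free function on $\C_{(0,\infty)}$, hence differ by a constant in $2\pi i\Z$ which vanishes because both are real on $(0,\infty)$; exponentiating $t\Psi$ gives \eqref{eq:Mellin transform equation} for $\Re(z)>0$. (Two smaller points: the recurrence should read $\M_{\nu_t}(z+1)=\phi^t(z+1)\M_{\nu_t}(z)$, not $\phi^t(z)$; and passing to $-1<\Re(z)\leq 0$ is not bare analytic continuation --- one must show $\int_0^\infty x^{\Re(z)}\nu_t(dx)<\infty$ there, e.g.\ via analytic characteristic functions, a point on which the paper is admittedly also brief.)

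Beyond that gap, be aware that your route differs genuinely from the paper's. You lean on the existence of the Berg--Urbanik semigroup with $\M_{\nu_t}=e^{t\Psi}$, which \Cref{def:Urbanik semigroup} imports from \cite{berg:2005,berg:2007}, and reduce everything to the $t=1$ identification; the periodic-ratio observation is then not needed (and by itself would not suffice: a $1$-periodic analytic function equal to $1$ on $\N_0$ need not be $1$, e.g.\ $e^{2\pi i z}$). The paper instead re-proves the key structural fact from scratch: adapting Berg's truncation argument to the generalized Weierstrass product, it shows that $b\mapsto-\Log W_\phi(1+ib)$ (with the branch dictated by the series, i.e.\ the continuous one) is continuous negative-definite, so that $W_\phi^t(1+i\cdot)$ is, for every $t>0$, the Mellin transform of a probability measure; this constructs a multiplicative convolution semigroup $(\mathcal{V}_t)_{t\geq 0}$, which is then identified with $(\nu_t)_{t\geq 0}$ through the $t=1$ result of \cite{patie:2015} and uniqueness of convolution semigroups. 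Your shortcut is admissible given the paper's definitions and would be shorter, but for non-integer $t$ its entire content is exactly the branch identification, which the paper's series-based argument handles automatically and which your write-up, as it stands, gets wrong; the final uniqueness claim via the $t=1$ case of \cite[Theorem 6.1(3)]{patie:2015} is fine in both approaches.
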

\begin{remark}
Note that when $t=1$, the equation \eqref{eq:functional equation of W_phi^t} restricted to $\R_+$ was studied by Webster in \cite{webster:1997}, who showed that $\left.W_\phi\right|_{\R_+}$ is the unique log-convex solution to the restricted functional equation.
\end{remark}
\begin{remark}
We point out  that in \cite[Theorem 4.1]{patie:2016} the authors proved that $W_\phi\in \A_{(\mathfrak{d}_\phi,\infty)}$, where
\begin{equation*}
\mathfrak{d}_\phi = \sup\{u \leq 0; \phi(u)=-\infty \ \text{ or } \ \phi(u)=0\} \in [-\infty,0],
\end{equation*}
which is more than what we claim in \Cref{thm:Bernstein-Gamma-Mellin-transform} for $t=1$. However, for $t \neq 1$, $W_\phi^t$ is only defined on the slit plane $\C \setminus (-\infty,0]$ and hence it is not possible to extend the strip of analyticity of $W_\phi^t$ beyond $\C_{(0,\infty)}$.
\end{remark}

This Theorem is proved in \Cref{subsec:Bernstein-Gamma-Mellin-transform-proof} .  Our proof of \eqref{eq:Mellin transform equation} in \Cref{thm:Bernstein-Gamma-Mellin-transform} generalizes an argument given by Berg \cite{berg:2005} for the case $W_\phi(z) = \Gamma(z)$, i.e.~$\phi(u) = u$, which uses the (classical) Weierstrass product representation for the  gamma function. We are able to readily adapt his argument to the generalized Weierstrass product for $W_\phi$ given by \eqref{eq:generalized-Weierstrass-product}, which emphasizes the utility of such a product representation. 

\subsection{Existence, smoothness, and Mellin-Barnes representation of densities} \label{subsec:smoothness}

In this section we obtain the existence of densities for subclasses of Berg-Urbanik semigroups, and quantify their regularities  based on properties of the associated Bernstein function. We write $\mathtt{C}_0(\R_+)$ for the set of continuous functions on $\R_+$ whose limit at infinity is zero. Then, for each $n \in \N$, we write $\mathtt{C}_0^n(\R_+)$ for the set of $n$-times differentiable functions all of whose derivatives belong to $\mathtt{C}_0(\R_+)$, and $\mathtt{C}_0^\infty(\R_+)$ for the set of infinitely differentiable functions all of whose derivatives belong to $\mathtt{C}_0(\R_+)$. Finally, for notational convenience, we write $\mu \in \mathtt{C}_0^n(\R_+)$ to denote that a measure $\mu$ on $\R_+$ has a density, with respect to Lebesgue measure on $\R_+$, and that this density belongs to $\mathtt{C}_0^n(\R_+)$.

To state our next result we need to consider some further subsets of $\B$. Following \cite{patie:2016}, we say that a L\'evy measure $\mu$ satisfies ${\textit{ Condition-}j}$ if $\mu(dy) = v(y)dy$ with $v(0^+)=\infty$, such that $v = v_1 + v_2$ for $v_1,v_2 \in \Leb^1(\R_+)$, and $v_1 \geq 0$ is non-increasing, while $\int_0^\infty v_2(y) dy \geq 0$ satisfies $|v_2(y)| \leq \left(\int_y^\infty v_1(r)dr\right) \vee C$, for some $C > 0$. Given this, we let
 \begin{equation*}
\B_j = \{\phi \in \B; \: \mu \text{ satisfies Condition-}j \}
\end{equation*}
and note that $\B_\J \subset \B_{j}$.

Write $||v||_\infty = \sup_{y \geq 0} |v(y)|$ for the sup-norm of a function on $\R_+$, and set
\begin{equation*}
\B_v = \{\phi \in \B \setminus \B_d ; \: \mu(y) = v(y)dy \text{ with } ||v||_\infty < \infty \},
\end{equation*}
so that $\phi \in \B_v$ implies that $\phi(\infty) < \infty$. We define the quantity $\mathtt{N}_\phi$ as
\begin{equation*}
\mathtt{N}_\phi =
\begin{cases}
\frac{v(0^+)}{\phi(\infty)} & \text{if } \phi \in \B_v, \\
\infty & \text{if } \phi \in \B_j \cup \B_d,
\end{cases}
\end{equation*}
and set
\begin{equation*}
\B_\texttt{N} = \{\phi \in \B_j \cup \B_v \cup \B_d; \: \mathtt{N}_\phi > 0\}.
\end{equation*}
Next, let
\begin{equation*}
\B_\Theta = \left\lbrace\phi \in \B; \: \Theta_\phi = \liminf_{b \to \infty} \frac{1}{|b|}\int_0^{b} \arg \phi(1+iu)du > 0\right\rbrace,
\end{equation*}
and note that $\Theta_\phi \in [0,\frac{\pi}{2}]$ due to \cite[Theorem 3.2(1)]{patie:2016}. In fact, if $\phi \in \B_d$ then $\Theta_\phi = \frac{\pi}{2}$, while if $\lim_{u \to \infty} \phi(u)u^{-\alpha} = C_\alpha$, for $\alpha \in (0,1)$ and a constant $C_\alpha \in (0,\infty)$, then $\Theta_\phi = \alpha\frac{\pi}{2}$ (see \cite[Theorem 3.3]{patie:2016}). Furthermore, there is nothing special about the 1 in $\arg\phi(1+iu)$ as it can be replaced by any $a > 0$ without changing the value of $\Theta_\phi$, which follows from a combination of \cite[Proposition 6.12]{patie:2015} and \cite[Theorem 3.1(1)]{patie:2016}; in the definition of $\B_\Theta$ we simply choose to evaluate $\arg \phi$ along the imaginary line $\Re(z) = 1$ for convenience. For $\theta \in (0,\pi]$ let
\begin{equation*}
\A(\theta) = \{f:\C \to \C; f \text{ is analytic on the sector } |\arg z| < \theta\},
\end{equation*}
that is $\A(\pi)$ denotes the set of functions that are analytic on the slit plane $\C \setminus (-\infty,0]$. Finally, we denote by $\supp(\mu)$ the support of a measure $\mu$.

\begin{theorem}
\label{thm:smoothness-t}
Let $(\nu_t)_{t \geq 0}$ be the Berg-Urbanik semigroup associated to $\phi \in \B$.
\begin{enumerate}
\item \label{item-1:thm:smoothness-t} Assume that $\phi \not\equiv \sf{k}$ for $\sf{k} \geq 0$. If $\phi(\infty) < \infty$ then $\supp(\nu_t) = [0,\phi(\infty)^t]$, otherwise $\supp(\nu_t) = [0,\infty)$ for all $t > 0$.
\item \label{item-2:thm:smoothness-t} If $\phi \in \B_\mathtt{N}$ then, for any $t > \frac{1}{\mathtt{N}_\phi}$, $\nu_t \in \mathtt{C}_0^{\mathfrak{n}(t)}(\R_+)$, i.e.~$\nu_t(dx) = \nu_t(x)dx, x > 0$, where $\mathfrak{n}(t) = \lfloor \mathtt{N_\phi} t \rfloor - 1 \geq 0$. Furthermore, for each $n \leq \mathfrak{n}(t)$, the density $\nu_t(x)$, and its successive derivatives, admit the Mellin-Barnes representation
\begin{equation*}
\nu_t^{(n)}(x) = \frac{(-1)^n}{2\pi i} \int_{c - i\infty}^{c+i\infty} x^{-z-n} \frac{\Gamma(z+n)}{\Gamma(z)}W_\phi^t(z)dz,
\end{equation*}
for any $c,x > 0$.
\item \label{item-3:thm:smoothness-t} If $\phi \in \B_\Theta$, then, for any $0 < t < \frac{\pi}{\Theta_\phi}$, $\nu_t \in \A(\Theta_\phi t)$, and for any $t \geq \frac{\pi}{\Theta_\phi}$, $\nu_t \in \A(\pi)$.
\end{enumerate}
\end{theorem}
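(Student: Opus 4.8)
The plan is to deduce all three parts from the Mellin identity $\M_{\nu_t}(z) = W_\phi^t(z+1)$ of \Cref{thm:Bernstein-Gamma-Mellin-transform}, together with the sharp growth and decay estimates for $W_\phi$ along vertical lines established by Patie and Savov in \cite[Chapter 6]{patie:2015} and \cite[Section 4]{patie:2016}. Writing $s=z+1$, the identity says that $W_\phi^t$ is the classical Mellin transform of $\nu_t$, i.e.\ $W_\phi^t(s)=\int_0^\infty x^{s-1}\nu_t(dx)$; consequently, whenever $b\mapsto W_\phi^t(c+ib)$ lies in $\Leb^1(\R)$ for some $c>0$, Mellin inversion shows that $\nu_t$ has a density given by
\begin{equation*}
\nu_t(x) = \frac{1}{2\pi i}\int_{c-i\infty}^{c+i\infty} x^{-s}\,W_\phi^t(s)\,ds, \qquad c,x>0,
\end{equation*}
and, since $\frac{d^n}{dx^n}x^{-s}=(-1)^n\frac{\Gamma(s+n)}{\Gamma(s)}x^{-s-n}$, differentiating $n$ times under the integral sign — licit as soon as $b\mapsto|b|^n|W_\phi^t(c+ib)|$ is integrable — produces exactly the stated Mellin-Barnes formula. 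As $W_\phi^t(s)=e^{t\Log W_\phi(s)}$ we have $|W_\phi^t(c+ib)|=|W_\phi(c+ib)|^t$, so the entire analytic content is to control the size of $W_\phi$ on imaginary lines; everything else is bookkeeping (Riemann-Lebesgue for vanishing at $\infty$, Cauchy's theorem plus the decay for independence of $c$, uniqueness of the Mellin transform to identify the recovered function with the density of $\nu_t$).

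For \Cref{item-1:thm:smoothness-t} I first read off the right end of the support from the moments: by \eqref{eq:moments-nu-t} and Ces\`aro's theorem applied to $\frac1n\sum_{k=1}^n\log\phi(k)\to\log\phi(\infty)$, one gets $\lim_{n\to\infty}\M_{\nu_t}(n)^{1/n}=\phi(\infty)^t$, and the classical fact that this limit is the essential supremum of the support of a probability measure on $\R_+$ (interpreting $+\infty$ as unbounded support) yields $\supp(\nu_t)\subseteq[0,\phi(\infty)^t]$ with the endpoint attained, respectively $\supp(\nu_t)$ unbounded when $\phi(\infty)=\infty$. To obtain the \emph{full} interval I pass to the additive picture: by \eqref{eq:Psi-representation} the process $Y=\log X$ is a L\'evy process with no Gaussian part and spectrally negative, whose triplet is explicit in terms of $(\sf{k},\sf{d},\mu)$, and I invoke the classical characterisation of the support of an infinitely divisible law in terms of its triplet to conclude that $\supp(Y_t)=(-\infty,t\log\phi(\infty)]$ (resp.\ $\R$); exponentiating gives the claim. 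The hypothesis $\phi\not\equiv\sf{k}$ ensures $Y$ is genuinely non-degenerate, and the semigroup relation $\nu_t\diamond\nu_s=\nu_{t+s}$ together with weak continuity at $t=0$ can be used to transport connectedness across $t$.

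For \Cref{item-2:thm:smoothness-t} the crux is the polynomial decay of $W_\phi$ on vertical lines for $\phi\in\B_j\cup\B_v\cup\B_d$. I would quote the Patie-Savov estimates in the form: for every compact $K\subset(0,\infty)$ there is $C_K$ with $|W_\phi(a+ib)|\le C_K(1+|b|)^{-\mathtt{N}_\phi}$ for $a\in K$, $b\in\R$, where in the case $\phi\in\B_v$ the exact rate $\mathtt{N}_\phi=v(0^+)/\phi(\infty)$ arises from $\phi'(u)/\phi(u)\sim\mathtt{N}_\phi u^{-2}$ as $u\to\infty$ fed through the functional equation $W_\phi(s+1)=\phi(s)W_\phi(s)$, while for $\phi\in\B_j\cup\B_d$ the decay is (faster than) exponential, matching $\mathtt{N}_\phi=\infty$. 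Raising to the $t$-th power gives $|W_\phi^t(a+ib)|\le C_K^t(1+|b|)^{-\mathtt{N}_\phi t}$, so $b\mapsto|b|^n|W_\phi^t(c+ib)|$ is integrable whenever $\mathtt{N}_\phi t-n>1$, i.e.\ for $0\le n\le\mathfrak{n}(t)$; this justifies the inversion (the case $n=0$, requiring $t>1/\mathtt{N}_\phi$) and the $n$-fold differentiation under the integral, delivering $\nu_t\in\mathtt{C}_0^{\mathfrak{n}(t)}(\R_+)$ together with the claimed Mellin-Barnes representation. I expect the real obstacle to be exactly this step: establishing, or correctly importing from \cite{patie:2015,patie:2016}, that the decay exponent is precisely $\mathtt{N}_\phi$ and that the argument is uniform over the three classes $\B_j$, $\B_v$, $\B_d$, including the behaviour at the borderline exponents where $\mathtt{N}_\phi t\in\N$.

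For \Cref{item-3:thm:smoothness-t} I run the same inversion, but now using \emph{exponential} decay. By the Stirling-type asymptotics for $W_\phi$ in \cite[Chapter 6]{patie:2015} and \cite[Section 4]{patie:2016}, the exponential rate of decay of $|W_\phi(a+ib)|$ as $|b|\to\infty$ is governed by $\int_0^{|b|}\arg\phi(a+iu)\,du$, so the very definition of $\Theta_\phi$ yields, for every $\epsilon>0$, the bound $|W_\phi(a+ib)|\le C_\epsilon\,e^{-(\Theta_\phi-\epsilon)|b|}$ for all large $|b|$, hence $|W_\phi^t(c+ib)|\le C_\epsilon^t\,e^{-t(\Theta_\phi-\epsilon)|b|}$. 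Since $|x^{-s}|=|x|^{-c}e^{(\Im s)\arg x}$ for $\Re s=c$, the integral $\frac{1}{2\pi i}\int_{(c)}x^{-s}W_\phi^t(s)\,ds$ then converges absolutely and locally uniformly for $x$ in the sector $\{|\arg x|<t(\Theta_\phi-\epsilon)\}$ and, by Morera's theorem (equivalently, differentiating under the integral in the complex variable $x$), defines a holomorphic function there; letting $\epsilon\downarrow0$ gives $\nu_t\in\A(\Theta_\phi t)$ for $t<\pi/\Theta_\phi$, and once $t\Theta_\phi\ge\pi$ the sector fills the whole slit plane, giving $\nu_t\in\A(\pi)$ — and one cannot do better, since $x\mapsto x^{-s}$, hence the representation itself, is cut along $(-\infty,0]$. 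As in the previous part, the delicate point is the passage from $\Theta_\phi$ (an averaged argument of $\phi$) to a genuine exponential decay rate for $W_\phi$; the $\liminf$ in the definition of $\Theta_\phi$ causes no trouble, since it is precisely a lower bound on the decay rate that is needed.
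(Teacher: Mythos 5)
Your parts (2) and (3) follow the paper's own proof in all essentials: Mellin inversion of $W_\phi^t$ based on the decay of $W_\phi$ along vertical lines from \cite[Theorem 4.2]{patie:2016}, a contour shift (equivalently differentiation under the integral) for the Mellin--Barnes formula, and the lower bound $\int_0^{|b|}\arg\phi(a+iu)\,du\geq(\Theta_\phi-\epsilon)|b|$ to push the inversion integral into the sector $|\arg x|<\Theta_\phi t-\epsilon$, followed by $\epsilon\downarrow 0$. Two small corrections to how you import the estimates: for $\phi\in\B_v$ the reference gives only $\lim_{|b|\to\infty}|b|^{\mathtt{N}_\phi-\epsilon}|W_\phi(a+ib)|=0$ for every $\epsilon>0$, not a bound with exact exponent $\mathtt{N}_\phi$, so one must give up an $\epsilon$ exactly as the paper does; consequently the method reaches $n\leq\mathfrak{n}(t)$ only when $\mathfrak{n}(t)<\mathtt{N}_\phi t-1$, and the borderline $\mathtt{N}_\phi t\in\N$ that you flag is indeed not covered --- but the paper's own choice of $\epsilon$ has the same restriction, so this is not a gap relative to the paper. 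Also, for $\phi\in\B_j$ the decay supplied by \cite[Theorem 4.2(3)]{patie:2016} is faster than any polynomial, not exponential in general; that is all your argument needs, but the quote should be corrected.

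Part (1) is where you genuinely depart from the paper, and where there is a real gap. The Ces\`aro/root-test step only identifies $\sup\supp(\nu_t)=\phi(\infty)^t$ (resp.\ unboundedness). For the full interval you invoke ``the classical characterisation of the support of an infinitely divisible law in terms of its triplet'': but when $\phi(\infty)<\infty$ the log-L\'evy process $Y$ is spectrally negative of \emph{bounded variation} with drift $\log\phi(\infty)$, and in that regime the support theorem (\cite[Theorem 24.10]{sato:2013}) yields $\supp(Y_t)=(-\infty,t\log\phi(\infty)]$ only when the L\'evy measure $\kappa(dy)/(y(e^y-1))$ is infinite, or, in the compound Poisson case, when the closed semigroup generated by its support is all of $[0,\infty)$; neither follows from the sole hypothesis $\phi\not\equiv\sf{k}$. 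Concretely, take $\sf{d}=0$ and $\mu$ a finite measure on the integers (e.g.\ $\phi(u)=2-e^{-u}$): then $U$, hence $\kappa$, is supported on the integer lattice, the jump measure of $Y$ is finite and lattice-valued, and $Y_t$ is a drifted compound Poisson whose support is a lattice, not a half-line. So the triplet alone does not deliver the claimed conclusion, and an additional argument about the structure of $\kappa$ is required at precisely this point. The paper argues differently: the Berg--Dur\'an moment-growth criterion ($\supp\subseteq[0,c]$ iff the moments are $\bigO(c^n)$), a dichotomy ``full interval versus Dirac'', and, for $\phi(\infty)=\infty$, the identification $\supp(\nu_1)=[0,\infty)$ from \cite[Theorem 5.2]{patie:2015} transferred to all $t$ via \cite[Theorem 24.7]{sato:2013}. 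Be aware, though, that the dichotomy step is itself sensitive to the same lattice phenomenon, so what you have hit is the genuinely delicate point of this item rather than bookkeeping. Your treatment of the unbounded case is fine once you add the short verification that $\phi(\infty)=\infty$ forces the jump part of $Y$ to have unbounded variation, which is what makes $\supp(Y_t)=\R$ for a spectrally negative process without Gaussian component.
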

\begin{remark}
From \Cref{thm:smoothness-t}\ref{item-1:thm:smoothness-t} it follows that the support of $\nu_t$ is bounded, pointwise in $t$, if and only if $\phi$ is a bounded function. Note that we exclude the case when $\phi \equiv \sf{k}$ as this corresponds to a Berg-Urbanik semigroup with degenerate support, i.e.~$\supp(\nu_t) = \delta_{\sf{k}^t}$.
\end{remark}
This Theorem is proved in \Cref{subsec:smoothness-t-proofs}.   A key ingredient in the proofs of \Cref{thm:smoothness-t}\ref{item-2:thm:smoothness-t} and \Cref{thm:smoothness-t}\ref{item-3:thm:smoothness-t} are estimates for Bernstein-Gamma functions along imaginary lines provided in \cite[Theorem 4.2]{patie:2016}.

The main point of \Cref{thm:smoothness-t}\ref{item-2:thm:smoothness-t} is  to quantify the differentiability of the Berg-Urbanik semigroup as a function of $t$ and simple quantities associated to $\phi$. In this sense our result complements and extends \cite[Theorem 5.2]{patie:2015}, which deals with the differentiability at time $1$. Finally, in \Cref{thm:smoothness-t}\ref{item-3:thm:smoothness-t} we describe the analyticity of $\nu_t$ both as a function of $\phi$ and $t$, and show that the sector of analyticity grows linearly in $t$. This gives rise to another kind of threshold phenomenon, whereby for large enough $t$ we get that the density is analytic on $\C \setminus (-\infty,0]$.

\subsection{Asymptotics at infinity of densities and their successive derivatives} \label{subsec:asymptotics}

In this section we consider a subset of Berg-Urbanik semigroups admitting smooth densities, for all $t > 0$, for which we are able to obtain the exact large asymptotic behavior of the density, as well as for all of its successive derivatives, for all time $t > 0$. We write $f(x) \stackrel{\infty}{\sim} g(x)$ if $\lim_{x \to \infty} \frac{f(x)}{g(x)} = 1$, and $f(x) \stackrel{\infty}{=} \littleo(g(x))$ if $\lim_{x \to \infty} \frac{f(x)}{g(x)} = 0$. The following theorem is the main result of this section, and one of the main results of this paper.

\begin{theorem}
\label{thm:asymptotics-infinity}
Let $\phi \in \B$ be such that $\phi(\infty) =\infty$ with $\phi^\mathfrak{t} \in \B_\J$, for all $\mathfrak{t} \in (0,1)$, and let $(\nu_t)_{t \geq 0}$ be the corresponding Berg-Urbanik semigroup. For any $t > 0$, $\nu_t \in \mathtt{C}_0^\infty(\R_+)$, i.e.~$\nu_t(dx) = \nu_t(x)dx,\: x > 0$, and the densities $\nu_t(x)$ satisfy the following large asymptotic behavior
\begin{equation}
\label{eq:asymptotic-nu-t}
\nu_t\left(x^t\right) \stackrel{\infty}{\sim}  \frac{C_\phi^t}{\sqrt{2\pi t}} \sqrt{x^{1-t}\varphi'(x)} \exp\left({-t\int_\sf{k}^{x} \frac{\varphi(r)}{r}dr}\right)
\end{equation}
where $C_\phi > 0$ is a constant depending only on $\phi$, and $\varphi:[\sf{k},\infty) \to [0,\infty)$ is the continuous inverse of $\phi$. Furthermore, for any $n \in \N$ and $t > 0$, the successive derivatives of the density satisfy
\begin{equation}
\label{eq:asymptotic-nu-t-n}
\nu_t^{(n)}\left(x^t\right) \stackrel{\infty}{\sim} (-1)^n x^{-nt} \varphi^n(x) \nu_t\left(x^t\right)
\end{equation}
which can be specified as follows.
\begin{enumerate}
\item \label{item-1:thm:asymptotics-infinity} If $\phi \in \B_d$ then
\begin{equation*}
\nu_t \left(x^t\right) \stackrel{\infty}{\sim}  \frac{\tilde{C}_\phi^t}{\sqrt{2\pi t}} x^\frac{\sf{d}+t(2\sf{k}-\sf{d})}{2\sf{d}} \exp\left(-\frac{tx}{\sf{d}} +\frac{t}{\sf{d}} \int_\sf{k}^x \frac{E(r)}{r} dr\right)
\end{equation*}
where $\tilde{C}_\phi > 0$ is a constant, and $E(u) \geq 0$ satisfies $E(u) \stackrel{\infty}{=} \littleo(u)$. Furthermore, for any $n \in \N$ and $t > 0$,
\begin{equation*}
\nu_t^{(n)}\left(x^t\right) \stackrel{\infty}{\sim} (-1)^n \sf{d}^n x^{n(1-t)} \nu_t\left(x^t\right).
\end{equation*}
\item \label{item-2:thm:asymptotics-infinity} If $\phi(u) \stackrel{\infty}{\sim} C_\alpha u^\alpha$, for a constant $C_\alpha > 0$ and $\alpha \in (0,1)$, then
\begin{equation*}
\nu_t\left(x^t\right) \stackrel{\infty}{\sim}  \frac{\bar{C}_\phi^t}{\sqrt{2\pi t}} x^{\frac{1-\alpha t}{2\alpha}} \exp\left(-t \alpha C_\alpha^{-\frac{1}{\alpha}} x^{\frac{1}{\alpha}} + t\int_{k}^x \frac{H(r)}{r}dr\right)
\end{equation*}
where $\bar{C}_\phi > 0$ is a constant, and $H(u^\alpha) \stackrel{\infty}{=} \littleo(u)$. Furthermore, for any $n \in \N$ and $t > 0$,
\begin{equation*}
\nu_t^{(n)}\left(x^t\right) \stackrel{\infty}{\sim} (-1)^n C_\alpha^{-\frac{n}{\alpha}} x^{\frac{n}{\alpha}(1-\alpha t)} \nu_t\left(x^t\right).
\end{equation*}
\end{enumerate}
\end{theorem}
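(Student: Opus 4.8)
\emph{Step 1: reduction to the Jurek class, smoothness, and the Mellin--Barnes formula.} The plan is to relate $(\nu_t^\phi)_{t\ge0}$ to the Berg--Urbanik semigroup of $\phi^\mathfrak{t}$ for $\mathfrak{t}\in(0,1)$. Because $\Re\phi(z)>0$ on $\C_{(0,\infty)}$ one has $\arg\phi(z)\in(-\tfrac\pi2,\tfrac\pi2)$ there, hence $\Log\phi^\mathfrak{t}=\mathfrak{t}\Log\phi$ and $(\phi^\mathfrak{t})^s=\phi^{\mathfrak{t}s}$ on $\C_{(0,\infty)}$ for all $s>0$; therefore $W_{\phi^\mathfrak{t}}^{1/\mathfrak{t}}$ is a positive-definite function whose $t$-th power solves, for every $t>0$, the functional equation \eqref{eq:functional equation of W_phi^t} with value $1$ at $1$, and the uniqueness part of \Cref{thm:Bernstein-Gamma-Mellin-transform} forces $W_{\phi^\mathfrak{t}}^{1/\mathfrak{t}}=W_\phi$. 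Since a Mellin transform along $i\R$ characterises a probability law on $\R_+$, it follows that $\nu_{\mathfrak{t}s}^{\phi}=\nu_{s}^{\phi^\mathfrak{t}}$ for all $\mathfrak{t}\in(0,1)$, $s>0$. Now fix $t>0$ and any $\mathfrak{t}\in(0,1)$: as $\phi^\mathfrak{t}\in\B_\J\subset\B_j$ we have $\mathtt{N}_{\phi^\mathfrak{t}}=\infty$, so $\phi^\mathfrak{t}\in\B_\mathtt{N}$, and \Cref{thm:smoothness-t}\ref{item-2:thm:smoothness-t} applied to $\phi^\mathfrak{t}$ shows that $\nu_t^\phi=\nu_{t/\mathfrak{t}}^{\phi^\mathfrak{t}}\in\mathtt{C}_0^\infty(\R_+)$ and, using $W_{\phi^\mathfrak{t}}^{t/\mathfrak{t}}=W_\phi^t$, that $\nu_t^{(n)}(x)=\tfrac{(-1)^n}{2\pi i}\int_{c-i\infty}^{c+i\infty}x^{-z-n}\tfrac{\Gamma(z+n)}{\Gamma(z)}W_\phi^t(z)\,dz$ for all $n\in\N$ and $c,x>0$. (As $\phi(\infty)=\infty$, \Cref{thm:smoothness-t}\ref{item-1:thm:smoothness-t} gives $\supp(\nu_t)=[0,\infty)$, so the tail estimates below are meaningful.)

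\emph{Step 2: the base case $t=1$ on $\B_\J$ and the Gaussian-tail property.} For $\psi\in\B_\J$ I would prove the $t=1$ instance of \eqref{eq:asymptotic-nu-t} by steepest descent applied to the Mellin inversion $\nu_1^\psi(x)=\tfrac1{2\pi i}\int x^{-z}W_\psi(z)\,dz$: the vertical-line decay of $W_\psi$ from \cite[Theorem~4.2]{patie:2016}, together with $(\Log W_\psi)'(z)\sim\Log\psi(z)$ as $z\to\infty$ along $\R_+$, places the saddle at level $x$ at $z_\ast=\psi^{-1}(x)\to\infty$, and the Gaussian second-order term supplies the prefactor $\sqrt{(\psi^{-1})'(x)}$. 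Equivalently — and this is the form used below — the density of the additive variable $\log X_1(\psi)$ is a density with a Gaussian tail in the precise sense of Balkema, Kl\"uppelberg and Resnick \cite{balkema:1993}, its self-neglecting auxiliary function being governed by $\psi^{-1}$; the regularity of $\psi^{-1}$ needed here is exactly what membership $\psi\in\B_\J$ (monotonicity of the Lévy density) provides, and this is the non-classical Gaussian-tail input we borrow from \cite{patie:2015}.

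\emph{Step 3: bootstrap in $t$, the derivative asymptotics, and the special cases.} Given $t>0$, pick $n\in\N$ with $n>t$ and put $\mathfrak{t}=t/n\in(0,1)$; by Step~1, $\nu_\mathfrak{t}^\phi=\nu_1^{\phi^\mathfrak{t}}$, which by Step~2 (with $\psi=\phi^\mathfrak{t}\in\B_\J$) has a Gaussian tail in the multiplicative sense, and $\nu_t^\phi=(\nu_\mathfrak{t}^\phi)^{\diamond n}$. Transporting to $\R$ by the logarithm and applying the closure theorem of \cite{balkema:1993} to the $n$-fold additive self-convolution yields its exact tail; unwinding the substitution $r\mapsto\phi(r)$ in the exponent and assembling the Gaussian prefactors created at each of the $n-1$ mergings produces, consistently in $n$, the right-hand side of \eqref{eq:asymptotic-nu-t}, with $C_\phi$ fixed by the base case. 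For the derivatives, insert in the Mellin--Barnes formula of Step~1 the identity $\tfrac{\Gamma(z+n)}{\Gamma(z)}W_\phi^t(z)=W_\phi^t(z+n)\prod_{j=0}^{n-1}\tfrac{z+j}{\phi^t(z+j)}$ and shift the contour by $n$; the inserted factor is slowly varying relative to $x^{-z}W_\phi^t(z)$, so the saddle at level $\xi$ remains at $z_\ast=\varphi(\xi^{1/t})$, and evaluating the factor there gives $\prod_{j=0}^{n-1}\tfrac{z_\ast+j}{\phi^t(z_\ast+j)}\stackrel{\infty}{\sim}z_\ast^{\,n}\phi^{-t}(z_\ast)^{\,n}=\varphi(\xi^{1/t})^{\,n}\xi^{-n}$, hence $\nu_t^{(n)}(\xi)\stackrel{\infty}{\sim}(-1)^n\xi^{-n}\varphi(\xi^{1/t})^n\nu_t(\xi)$, which is \eqref{eq:asymptotic-nu-t-n} after $\xi=x^t$. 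Finally, \ref{item-1:thm:asymptotics-infinity} and \ref{item-2:thm:asymptotics-infinity} follow by specialising \eqref{eq:asymptotic-nu-t}--\eqref{eq:asymptotic-nu-t-n}: if $\phi\in\B_d$ then $\phi(u)=\sf{d}u+E(u)$ with $0\le E(u)\stackrel{\infty}{=}\littleo(u)$, $\varphi'(u)\to\sf{d}^{-1}$, and $\int_\sf{k}^x\tfrac{\varphi(r)}{r}dr$ is computed via $r=\phi(s)$; if $\phi(u)\stackrel{\infty}{\sim}C_\alpha u^\alpha$ then $\varphi$ is regularly varying of index $1/\alpha$ and a Karamata estimate of the same integral gives the stated expansion, with $E,H$ recording the respective lower-order terms.

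\emph{Main obstacle.} The delicate part is Step~2 and its use in Step~3: one must check that the log-density of $\nu_1^{\phi^\mathfrak{t}}$ really satisfies the hypotheses of \cite{balkema:1993} — in particular that the auxiliary function attached to its exponent is self-neglecting, a regularity/growth statement about $(\phi^\mathfrak{t})^{-1}$ — and then track the $n-1$ Gaussian prefactors through the closure theorem so that, after the change of variables, the constant and the power of $x$ assemble into $\tfrac{C_\phi^t}{\sqrt{2\pi t}}\sqrt{x^{1-t}\varphi'(x)}$ independently of the auxiliary integer $n$. Obtaining the base case with its exact Gaussian prefactor — not merely up to logarithmic precision — is the second technically demanding point.
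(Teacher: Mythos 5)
Your overall architecture for the density coincides with the paper's: identify $\nu^\phi_{\mathfrak{t}s}=\nu_s^{\phi^{\mathfrak{t}}}$ (your route through $W_{\phi^{\mathfrak{t}}}^{1/\mathfrak{t}}=W_\phi$ and uniqueness is a valid variant of the paper's moment-based identification), get smoothness from \Cref{thm:smoothness-t}\ref{item-2:thm:smoothness-t}, obtain the asymptotic at small times from the $t=1$ result on $\B_\J$, and bootstrap to all $t>0$ with the Gaussian-tail closure theorem of \cite{balkema:1993}. But there are two genuine gaps. First, your base case is not proved: the steepest-descent analysis of $\frac{1}{2\pi i}\int x^{-z}W_\psi(z)dz$ with the exact prefactor $\sqrt{(\psi^{-1})'(x)}$ is precisely what is \emph{not} available in this generality (the paper explicitly notes that a saddle point approach is only known for $\phi(u)=u$, via delicate gamma-function estimates, and poses the general case as an open direction). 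The correct input is \cite[Theorem 5.5]{patie:2015}, proved there by a non-classical Tauberian theorem, and — crucially — that result gives the large asymptotics of \emph{all derivatives} $\nu_\alpha^{(n)}$ at small times, which your plan never exploits.

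Second, and as a consequence, your treatment of \eqref{eq:asymptotic-nu-t-n} does not work as written. Inserting $\frac{\Gamma(z+n)}{\Gamma(z)}W_\phi^t(z)=W_\phi^t(z+n)\prod_{j=0}^{n-1}\frac{z+j}{\phi^t(z+j)}$ and ``evaluating the slowly varying factor at the saddle'' presupposes exactly the rigorous saddle point method you do not have; as it stands this step is a heuristic, not a proof. The paper's actual argument for the derivatives is different and substantially more involved: starting from the derivative asymptotics at time $\alpha\in(0,1)$ it shows that $(-1)^nf_\alpha^{(n)}$ (with $f_t(y)=e^y\nu_t(e^y)$) is eventually positive and integrable, justifies the identity $f_t^{(n)}=f_\alpha^{(n)}*f_\tau$ through $\Leb^2$ bounds obtained from Parseval's formula and the polynomial decay of $W_\phi^t$ along vertical lines, verifies that the factor $\varphi^n(e^{y/\alpha})$ is flat with respect to the asymptotically parabolic exponent, and only then applies the convolution results (\Cref{lem:tail-asymptotics}\ref{item-2:lem:tail-asymptotics}--\ref{item-3:lem:tail-asymptotics}) and identifies $\eta_0,\psi_0$ explicitly. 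Relatedly, the verification that $f_\alpha$ itself has a Gaussian tail in the precise sense of \cite{balkema:1993} (self-neglecting scale function $\sqrt{\phi/\phi'}$, Legendre-transform computation, flatness of the prefactor — \Cref{lem:self-neglecting,lem:flat-big-O} in the paper) is flagged by you as the ``main obstacle'' but never carried out. To repair the proposal you should import \cite[Theorem 5.5]{patie:2015} in full strength (density and derivatives at small times) and replace your contour-shift step by the convolution argument for $f_t^{(n)}$; items \ref{item-1:thm:asymptotics-infinity} and \ref{item-2:thm:asymptotics-infinity} then follow by the elementary asymptotics of $\varphi,\varphi'$ essentially as you sketch.
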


\begin{remark}
Note the asymptotic \eqref{eq:asymptotic-nu-t} is a key ingredient in the proof of \Cref{thm:threshold-result} regarding the moment determinacy of the Berg-Urbanik semigroups.
\end{remark}

\begin{remark}
In the special case $\phi(u) = u$ the identity in \eqref{eq:asymptotic-nu-t} boils down to
\begin{equation}
\label{eq:asymptotic-classical-Urbanik}
e_t^{(n)}(x) \stackrel{\infty}{\sim} (-1)^n \frac{(2\pi)^\frac{t-1}{2}}{\sqrt{t}} x^\frac{1-t}{2t} x^{n(\frac{1}{t}-1)} e^{-tx^{\frac{1}{t}}}
\end{equation}
where we recall that $(e_t)_{t > 0}$ stands for the classical Urbanik semigroup, see \eqref{eq:classical-Urbanik}. For $n=0$ and $t>0$ this asymptotic was proved by Berg and L\'opez in \cite{berg:2015}, see also Janson \cite{janson:2010} for an independent proof. In both papers the authors apply a delicate saddle point argument hinging on special properties of the  gamma function such as  the Stirling's formula with Binet remainder for the  gamma function  as in \cite{berg:2015}. Furthermore, Janson outlines how his saddle point argument can be applied to yield the asymptotics in \eqref{eq:asymptotic-classical-Urbanik} for arbitrary $n \in \N$, see~\cite[Remark 6.2]{janson:2010}. It would be interesting to see if a saddle point approach could be applied for general Berg-Urbanik semigroups, using the Mellin transform representation we provide in \Cref{thm:Bernstein-Gamma-Mellin-transform} together with further study of Bernstein-Gamma functions.

\end{remark}
This Theorem is proved in \Cref{subsec:asymptotics-infinity-proofs-1,subsec:asymptotics-infinity-proofs-2}.  There are three main steps in the proof of the asymptotics \eqref{eq:asymptotic-nu-t} and \eqref{eq:asymptotic-nu-t-n}. The first one hinges on a non-classical Tauberian theorem whose version we use is due to Patie and Savov \cite[Proposition 5.26]{patie:2015} but originates from the work of Balkema \cite[Theorem 4.4]{balkema:1995}. It enables us to get the large asymptotic behavior of the densities and of its successive derivatives at time $t = 1$, under the less stringent conditions $\phi \in \B_\J$. Since the conditions to invoke this non-classical Tauberian theorem are difficult to check, one can not follow this path for other times than $1$. Instead, we combine the asymptotic at time $1$ of the densities from \cite[Theorem 5.5]{patie:2015} together with assumption that $\phi^\mathfrak{t} \in \B_\J$, for all $\mathfrak{t} \in (0,1)$, to obtain the asymptotic at time $\mathfrak{t}$. Lastly we adapt to our context a closure result due to Balkema et al.~\cite[Theorem 1.1]{balkema:1993}, which states that the (additive) convolution of probabilities density with Gaussian tails also has a Gaussian tail, to extend the asymptotic from $\mathfrak{t} \in (0,1)$ to all $t > 0$. Our application of this closure result is novel, since we use it not only for the densities (as it is stated in \cite{balkema:1993}) but also for their successive derivatives.

As a by-product of \Cref{thm:asymptotics-infinity} we obtain the large asymptotic behavior of the density and its successive derivatives for the law of certain L\'evy processes, which seems to be new in the L\'evy literature. To state this we briefly recall that a (one-dimensional) L\'evy process $(Y_t)_{t \geq 0}$ is a $\R$-valued stochastic process with stationary and independent increments, that is continuous in probability, and such that $Y_0 = 0$ a.s. For further information regarding L\'evy processes we refer to the monograph \cite{sato:2013}. Note that to each Berg-Urbanik semigroup there exists a corresponding L\'evy process whose characteristic exponent is given by \eqref{eq:Psi-representation}.

\begin{corollary}
Let $\phi \in \B$ be such that $\phi(\infty) =\infty$ with $\phi^\mathfrak{t} \in \B_\J$, for all $\mathfrak{t} \in (0,1)$, and let $(Y_t)_{t \geq 0}$ be a L\'evy process whose characteristic exponent $\Psi$ is given by \eqref{eq:Psi-representation}. Then, for $t > 0$, $\mathbb{P}(Y_t \in dy) = f_t(y)dy, y \in \R$ with $f_t \in \mathtt{C}_0^\infty(\R)$ and, for any $n \geq 0$,
\begin{equation*}
f_t^{(n)}(ty) \stackrel{\infty}{\sim} (-1)^n \frac{C_\phi^t}{\sqrt{2\pi t}} \varphi^n(e^y)\sqrt{e^{(1+t)y}\varphi'(e^y)} \exp\left(-t\int_{\sf{k}}^{e^y} \frac{\varphi(r)}{r}dr\right)
\end{equation*}
where $C_\phi > 0$ is a constant depending only on $\phi$, and $\varphi:[\sf{k},\infty) \to [0,\infty)$ is the continuous inverse of $\phi$.
\end{corollary}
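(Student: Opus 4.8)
The plan is to derive this Corollary directly from \Cref{thm:asymptotics-infinity} via the change of variables relating the multiplicative convolution semigroup $(\nu_t)_{t \geq 0}$ to the L\'evy process $(Y_t)_{t \geq 0}$. Recall that by construction of the Berg-Urbanik semigroup, $\nu_t$ is the law of $e^{Y_t}$, since $\M_{\nu_t}(n) = \int_0^\infty x^n \nu_t(dx) = \E[e^{nY_t}] = e^{t\Psi(n)}$, with $\Psi$ given by \eqref{eq:Psi-representation}. First I would observe that under the hypotheses $\phi(\infty) = \infty$ and $\phi^\mathfrak{t} \in \B_\J$ for all $\mathfrak{t} \in (0,1)$, \Cref{thm:asymptotics-infinity} gives $\nu_t \in \mathtt{C}_0^\infty(\R_+)$ for every $t > 0$; the map $x \mapsto \log x$ is a smooth diffeomorphism of $\R_+$ onto $\R$, so pushing the density $\nu_t(x)$ forward yields $\mathbb{P}(Y_t \in dy) = f_t(y)dy$ with $f_t(y) = e^y \nu_t(e^y)$, and $f_t \in \mathtt{C}_0^\infty(\R)$ follows because products and compositions of smooth functions vanishing at $+\infty$ (here one also checks decay at $-\infty$, which holds since $\nu_t$ is a probability density on $\R_+$ and, near $0$, $e^y\nu_t(e^y) \to 0$ as $y \to -\infty$) stay in the class.

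Next I would translate the asymptotic \eqref{eq:asymptotic-nu-t}. Setting $x = e^{y/t}$ in \eqref{eq:asymptotic-nu-t}, so that $x^t = e^y$, gives
\begin{equation*}
\nu_t(e^y) \stackrel{\infty}{\sim} \frac{C_\phi^t}{\sqrt{2\pi t}}\sqrt{e^{(1-t)y/t}\,\varphi'(e^{y/t})}\,\exp\left(-t\int_{\sf{k}}^{e^{y/t}}\frac{\varphi(r)}{r}dr\right),
\end{equation*}
as $y \to \infty$. Multiplying by $e^y$ and renaming the running variable (i.e. evaluating at $ty$ in place of $y$) produces the stated formula for $f_t(ty) = e^{ty}\nu_t(e^{ty})$; the prefactor $e^{ty}\cdot e^{(1-t)y/2}$ combines with the square root to give $\sqrt{e^{(1+t)y}\varphi'(e^y)}$ after again substituting $e^{ty}$ appropriately — this is the routine bookkeeping that I would carry out carefully but not belabor here. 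For the derivatives, I would use the chain rule: writing $g_t(y) = \nu_t(e^y)$, one has $f_t(y) = e^y g_t(y)$, and differentiating repeatedly expresses $f_t^{(n)}(y)$ as a finite linear combination of $e^y \nu_t^{(j)}(e^y)$ for $0 \leq j \leq n$ with polynomial-in-$e^y$ coefficients; by \eqref{eq:asymptotic-nu-t-n} the dominant term is the one with $j = n$, since $\nu_t^{(n)}(e^y) \stackrel{\infty}{\sim} (-1)^n e^{-ny}\varphi^n(e^{y/t})\cdot(\text{lower order in the product with }e^{(n-1)y})$-type comparisons show the $j=n$ contribution swamps the others (the factor $\varphi(e^y) \to \infty$ forces each successive differentiation of $\nu_t$ to dominate any algebraic factor picked up from differentiating $e^y$). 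Collecting the leading term and substituting as before yields the claimed equivalence with the factor $(-1)^n \varphi^n(e^y)$.

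The main obstacle I anticipate is the careful justification that, after the change of variables and the chain-rule expansion, the sub-leading terms are genuinely negligible \emph{relative to} the leading term — this requires knowing that $\varphi(e^y) = \varphi(x)$ grows to infinity (which follows from $\phi(\infty) = \infty$, so $\varphi$ is defined on all of $[\sf{k},\infty)$ and unbounded) faster than any of the algebraic corrections, and one must invoke \eqref{eq:asymptotic-nu-t-n} uniformly enough to compare ratios $\nu_t^{(j)}(e^y)/\nu_t^{(n)}(e^y)$ for each fixed $j < n$. Since \eqref{eq:asymptotic-nu-t-n} is an exact asymptotic equivalence for each fixed $n$, these ratios behave like $\varphi^{j-n}(e^{y/t}) \cdot (\text{algebraic})$ which tends to $0$, so the argument closes; but the statement should be assembled so that one applies \eqref{eq:asymptotic-nu-t-n} for the finitely many indices $j \leq n$ simultaneously. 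The remaining work — verifying $f_t \in \mathtt{C}_0^\infty(\R)$ including the behavior as $y \to -\infty$, and the purely computational matching of exponents and prefactors — is routine given \Cref{thm:asymptotics-infinity,thm:smoothness-t}.
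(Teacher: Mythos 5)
Your proposal is correct and follows essentially the same route as the paper: the paper also obtains the corollary by combining \eqref{eq:asymptotic-nu-t} and \eqref{eq:asymptotic-nu-t-n} with the chain-rule relation $f_t^{(n)}(y) \stackrel{\infty}{\sim} e^{(n+1)y}\nu_t^{(n)}(e^y)$ (established inside the proof of \Cref{thm:asymptotics-infinity} via exactly your dominance argument, the lower-order terms being smaller by factors of $\varphi(e^{y/t})\to\infty$), followed by the same change of variables $y \mapsto ty$. Your exponent bookkeeping matches the stated formula, so no gap.
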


This corollary is obtained by combining \eqref{eq:asymptotic-nu-t} and \eqref{eq:asymptotic-nu-t-n} with the relation $f_t^{(n)}(y) \stackrel{\infty}{\sim} e^{(n+1)y}\nu_t^{(n)}(e^y)$, for any $n \geq 0$, which is established in the proof of \Cref{thm:asymptotics-infinity}. We are not aware of such a detailed description of the large asymptotic behavior for the law of a L\'evy process, for all $t >0$ as well as of its successive derivatives, having appeared in the L\'evy literature before, except in some special cases.

\section{Examples} \label{sec:examples}

In this section we consider two examples of Berg-Urbanik semigroups that illustrate the previous results. 

\begin{example}
\label{ex:1}
Let $\Phi_{\alpha,\mathfrak{a},\mathfrak{b}}$ be the Bernstein function defined, for $u\geq0$, by
\begin{equation*}
\Phi_{\alpha,\mathfrak{a},\mathfrak{b}}(u) = \frac{\Gamma(\alpha u + \mathfrak{a})}{\Gamma(\alpha u + \mathfrak{b})}
\end{equation*}
with $\alpha \in (0,1]$ and $0 \leq \mathfrak{b} <\mathfrak{a} < \mathfrak{b} +1$, where the fact that $\Phi_{\alpha,\mathfrak{a},\mathfrak{b}}$ is a Bernstein functions follows from \cite[Proposition 1 and Remark 1]{kuznetsov:2013}. Next, let, for $\tau \in \R_+$, $G(z|\tau)$ denote the double gamma function, and recall that it satisfies the functional equation
\begin{equation}
\label{eq:double-gamma-functional}
G(z+1|\tau) = \Gamma\left(\frac{z}{\tau}\right) G(z|\tau),
\end{equation}
for $z \in \C_{(0,\infty)}$, with $G(1|\tau) = 1$. We claim that
\begin{equation}
\label{eq:W_phi-equals-double-gamma}
W_{\Phi_{\alpha,\mathfrak{a},\mathfrak{b}}}(z) = C_{\alpha,\mathfrak{a},\mathfrak{b}}\frac{G( z + \frac{\mathfrak{a}}{\alpha}|\frac{1}{\alpha})}{G( z + \frac{\mathfrak{b}}{\alpha}|\frac{1}{\alpha})}, \quad \text{where} \quad C_{\alpha,\mathfrak{a},\mathfrak{b}} = \frac{\Gamma(\mathfrak{b})G(\frac{\mathfrak{b}}{\alpha}|\frac{1}{\alpha})}{\Gamma(\mathfrak{a})G(\frac{\mathfrak{a}}{\alpha}|\frac{1}{\alpha})}.
\end{equation}
Indeed, from \eqref{eq:double-gamma-functional} it follows that
\begin{equation*}
\frac{G( z + 1 + \frac{\mathfrak{a}}{\alpha}|\frac{1}{\alpha})}{G( z + 1 + \frac{\mathfrak{b}}{\alpha}|\frac{1}{\alpha})} = \frac{\Gamma(\alpha z + \mathfrak{a})}{\Gamma(\alpha z + \mathfrak{b})}\frac{G( z + \frac{\mathfrak{a}}{\alpha}|\frac{1}{\alpha})}{G( z + \frac{\mathfrak{b}}{\alpha}|\frac{1}{\alpha})},
\end{equation*}
for $z \in \C_{(0,\infty)}$, and the choice of $C_{\alpha,\mathfrak{a},\mathfrak{b}}$ ensures the required normalization. Hence it remains to prove the uniqueness. To this end we note that, by a Malmsten-type representation for $G(z|\tau)$ due to \cite{lawrie:1994}, we have
\begin{equation}
\label{eq:log-double-gamma}
\log \left(\frac{G( z + \frac{\mathfrak{a}}{\alpha}|\frac{1}{\alpha})}{G( z + \frac{\mathfrak{b}}{\alpha}|\frac{1}{\alpha})} \right) = -c - \kappa z + \int_0^\infty (e^{-zy}-1+zy) f_{\alpha,\mathfrak{a},\mathfrak{b}}(y)dy,
\end{equation}
where $c, \kappa$ are real-constants depending only on the underlying parameters, and
\begin{equation*}
f_{\alpha,\mathfrak{a},\mathfrak{b}}(y) = \frac{(e^{-\frac{\mathfrak{b}}{\alpha}y}-e^{-\frac{\mathfrak{a}}{\alpha} y})}{y(1-e^{-y})(1-e^{-\frac{y}{\alpha}})},
\end{equation*}
see for instance  \cite[(2.15)]{letemplier:2015}. Differentiating the right-hand side of \eqref{eq:log-double-gamma} twice, which is justified by dominated convergence, shows that the ratio of double-gamma functions is log-convex. However, $W_{\Phi_{\alpha,\mathfrak{a},\mathfrak{b}}}$ is the unique log-convex function on $\R_+$ solution to the functional equation, and thus the claim is proved. 

Next, we note that $\Phi_{\alpha,\mathfrak{a},\mathfrak{b}}$ is a complete Bernstein function. Indeed, $\Phi_{\alpha,\mathfrak{a},\mathfrak{b}}$ is obtained by the dilation and translation of the argument of the function $\Phi_{\alpha,\mathfrak{m}}$ below, whose L\'evy measure is easily seen via direct calculation to be completely monotone, and these operations preserve the property of being a complete Bernstein function, which can be seen by using the upper half-plane criterion as outlined in \Cref{rem:cb}. Moreover, the density of the L\'evy measure of $\Phi_{\alpha,\mathfrak{a},\mathfrak{b}}$ is necessarily infinite at 0, which follows from $\Phi_{\alpha,\mathfrak{a},\mathfrak{b}}(\infty) = \infty$, and so $\Phi_{\alpha,\mathfrak{a},\mathfrak{b}} \in \B_j$, which gives by definition that $\mathtt{N}_{\Phi_{\alpha,\mathfrak{a},\mathfrak{b}}} = \infty$. Thus, invoking \Cref{thm:smoothness-t}\ref{item-2:thm:smoothness-t} yields that, for all $t>0$, $\nu_t \in \mathtt{C}_0^\infty(\R_+)$ and since, by Stirling formula, recalled in \eqref{eq:stirling} below, $\Phi_{\alpha,\mathfrak{a},\mathfrak{b}}(u) \stackrel{\infty}{\sim} C u^{\mathfrak{a}-\mathfrak{b}}$, for a constant $C > 0$ and with $\mathfrak{a}-\mathfrak{b} \in (0,1)$, these densities satisfy the large asymptotic behavior specified by~\Cref{thm:asymptotics-infinity}\ref{item-2:thm:asymptotics-infinity}. From \cite[Theorem 3.3(2)]{patie:2016} we get that $\Theta_{\Phi_{\alpha,\mathfrak{a},\mathfrak{b}}} = \frac{(\mathfrak{a}-\mathfrak{b}) \pi}{2}$, see the discussion prior to \Cref{thm:smoothness-t} for the definition, where we may apply this result since $\Phi_{\alpha,\mathfrak{a},\mathfrak{b}} \in \mathcal{B}_\alpha$ with $\ell \equiv 1$ in the notation therein. Hence invoking \Cref{thm:smoothness-t}\ref{item-3:thm:smoothness-t} gives that $\nu_t \in \A(\frac{(\mathfrak{a}-\mathfrak{b}) \pi t}{2})$ for $t < \frac{2}{\mathfrak{a}-\mathfrak{b}}$ and $\nu_t \in \A(\pi)$ for $t \geq \frac{2}{\mathfrak{a}-\mathfrak{b}}$. Finally, the property $\Phi_{\alpha,\mathfrak{a},\mathfrak{b}}(u) \stackrel{\infty}{\sim} C u^{\mathfrak{a}-\mathfrak{b}}$ gives, by \Cref{thm:threshold-result}\ref{item-3:thm:threshold-result}, that $\scr{T}_{\Phi_{\alpha,\mathfrak{a},\mathfrak{b}}} = \frac{2}{\mathfrak{a}-\mathfrak{b}}$ and, by \Cref{thm:threshold-result}\ref{item-1:thm:threshold-result}, we also have that the semigroup is moment determinate at the threshold. As remarked earlier, this example reveals that for any $T \in (2,\infty)$ there exists a Bernstein function, namely $\Phi_{\alpha,\mathfrak{a},\mathfrak{b}}$ with $\mathfrak{a}-\mathfrak{b} = \frac{2}{T}$ and any $\alpha \in (0,1]$, whose associated Berg-Urbanik semigroup has threshold index $\scr{T}_{\Phi_{\alpha,\mathfrak{a},\mathfrak{b}}} = T$.

Now let us now mention that for the special case when $\mathfrak{a} = \alpha \mathfrak{m}+1$ and $\mathfrak{b} = \alpha \mathfrak{m} + 1-\alpha$, where $\mathfrak{m} \in [1-\frac{1}{\alpha},\infty)$, so that $\mathfrak{a}-\mathfrak{b} = \alpha$, some expressions above simplify. Indeed, in this case, the Bernstein function takes the form
\begin{equation*}
\Phi_{\alpha,\mathfrak{m}}(u) = \frac{\Gamma(\alpha u + \alpha \mathfrak{m} + 1)}{\Gamma(\alpha u + \alpha\mathfrak{m} + 1-\alpha)} = \frac{\Gamma(\alpha \mathfrak{m}+1)}{\Gamma(\alpha\mathfrak{m}+1-\alpha)} + \int_0^\infty (1-e^{-uy}) e^{-(\mathfrak{m}+\frac{1}{\alpha})y} (1-e^{-\frac{y}{\alpha}})^{-\alpha - 1} dy,
\end{equation*}
and was studied in the context of the so-called Gauss-Laguerre semigroup in \cite{patie:2017}, see the computations on p.808 therein for the above equality. For $z \in \C_{(0,\infty)}$, the ratio of double gamma functions in \eqref{eq:W_phi-equals-double-gamma} boils down to
\begin{equation*}
W_{\Phi_{\alpha,\mathfrak{m}}}(z) = \frac{\Gamma(\alpha z + \alpha\mathfrak{m}+1-\alpha)}{\Gamma(\alpha\mathfrak{m}+1)},
\end{equation*}
see e.g.~\cite[Lemma 3.1]{patie:2017}, and we also have
\begin{equation*}
\nu_1(x) = \frac{x^{\mathfrak{m}+\frac{1}{\alpha}-1}e^{-x^\frac{1}{\alpha}}}{\Gamma(\alpha\mathfrak{m}+1)}, \ x>0,
\end{equation*}
see~\cite[Equation (3.10)]{patie:2015} and more generally Section 3.3 of the aforementioned paper.
\end{example}

\begin{example}
\label{ex:2}
Let $\phi \in \B$ and consider the  function defined, on $\R^+$, by
\begin{equation*}
\phi_{\ell}(u) = \log\left(\frac{\phi(u+1)}{\phi(1)}\right).
\end{equation*}
Observe that,
\begin{equation*}
\phi_{\ell}'(u) = \log\left(\frac{\phi(u+1)}{\phi(1)}\right)' = \frac{\phi'(u+1)}{\phi(u+1)} = \int_0^\infty e^{-uy} e^{-y}\kappa(dy) = \int_0^\infty e^{-uy} \kappa_e(dy),
\end{equation*}
where we used the last identity in \eqref{eq:Psi-representation} and the last equality serves as a definition for the positive measure $\kappa_e$. It means that $\phi_{\ell}'$ is completely monotone and since $\phi_{\ell}$ is plainly positive on $\R^+$, we deduce that $\phi_\ell \in \B$. Next, as a general result on Bernstein functions gives $\limsup_{u \to \infty} u^{-1}\phi(u) < \infty$, see for instance \cite[Proposition 4.1(3)]{patie:2015}, it follows readily that for any $\beta>0$, $\limsup_{u \to \infty} u^{-\beta}\phi_\ell(u)=0$ and thus $\beta_{\phi_{\ell}} = 0$, see \eqref{eq:def-Blumenthal-Getoor} for definition. Hence, the  Berg-Urbanik semigroup associated to the Bernstein function $\phi_\ell$ is completely determinate. 

As an illustration, we choose, for $\lambda > 0$, $\phi(u)=1+\frac{u}{\lambda} \in \B$ and we have, writing $\phi_\ell=\phi_\lambda$, that
\begin{equation}
\label{eq:phi-lambda}
\phi_\lambda(u) = \log\left(1+\frac{u}{\lambda}\right) = \int_0^\infty (1-e^{-uy}) \frac{e^{-\lambda y}}{y} dy.
\end{equation}
It follows plainly from the right-hand side of the equality \eqref{eq:phi-lambda} that the L\'evy measure of $\phi_\lambda$ is completely monotone, and thus $\phi_\lambda$ is a complete Bernstein function. Furthermore, we have that $\mathtt{N}_{\phi_\lambda} = \infty$, since $\phi_\lambda$ satisfies Condition-$j$ and $\phi_\lambda(\infty) = \infty$. Hence we get from \Cref{thm:smoothness-t}\ref{item-1:thm:smoothness-t} that $\supp(\nu_t) = [0,\infty)$ for all $t > 0$, and from \Cref{thm:smoothness-t}\ref{item-2:thm:smoothness-t} we conclude that for all $t>0$, $\nu_t(dx) = \nu_t(x)dx$ with $\nu_t \in \mathtt{C}_0^\infty(\R_+)$. A straightforward computation yields that the continuous inverse of $\phi_\lambda$ is given by $u \mapsto \lambda(e^u-1)$. Hence, by \Cref{thm:asymptotics-infinity}, we have, for all $t > 0$, that
\begin{equation*}
\nu_t\left(x^t\right) \stackrel{\infty}{\sim} \frac{C^t}{\sqrt{2\pi t}} x^\frac{1-t(1+2\lambda)}{2} \exp\left(-\lambda t \operatorname{Ei}(x) + \frac{x}{2}\right),
\end{equation*}
where $C > 0$ is a constant and $\operatorname{Ei}(x) = -\int_{-x}^\infty \frac{e^{-t}}{t}dt$ is the exponential integral, and we also used the well-known relation $\operatorname{Ei}(x) = \gamma + \log x + \int_0^x \frac{e^r-1}{r}dr$, where $\gamma$ is the Euler-Mascheroni constant.
\end{example}

\section{Proofs of main results} \label{sec:main-proofs}

Throughout the proofs we write $f(x) \stackrel{\infty}{=} \bigO(g(x))$ to denote that $\limsup\limits_{x \to \infty} \left|\frac{f(x)}{g(x)}\right| < \infty$, and recall that $f(x) \stackrel{\infty}{\sim} g(x)$ if $\lim_{x \to \infty} \frac{f(x)}{g(x)} = 1$, and $f(x) \stackrel{\infty}{=} \littleo(g(x))$ if $\lim_{x \to \infty} \frac{f(x)}{g(x)} = 0$. 

\subsection{Proof of \Cref{thm:Bernstein-Gamma-Mellin-transform}}  \label{subsec:Bernstein-Gamma-Mellin-transform-proof} 

We begin with the proof of \eqref{eq:Mellin transform equation} and start by showing that the function $b \mapsto -\Log W_\phi(1+ib)$ is a continuous negative-definite function, i.e.~a continuous function $f$ such that $f(0) \geq 0$ and $u \mapsto e^{-tf(u)}$ is positive-definite for all $t > 0$, see~\cite[Proposition 4.4]{schilling:2012}. As mention in the introduction, this fact has already been established by Berg \cite{berg:2007} and independently by Hirsch and Yor \cite{hirsch:2013} and we shall provide yet another proof utilizing the Weierstrass product representation for $W_\phi$. We follow closely the arguments given by Berg for the proof of \cite[Lemma 2.1]{berg:2005}. First, from \eqref{eq:generalized-Weierstrass-product} we have, for $\Re(z) > 0$,
\begin{equation*}
W_\phi(z) = \frac{e^{-\gamma_\phi z}}{\phi(z)} \prod_{k=1}^\infty \frac{\phi(k)}{\phi(k+z)} e^{\frac{\phi'(k)}{\phi(k)}z},
\end{equation*}
where $\gamma_\phi = \lim_{n \to \infty} \left( \sum_{k=1}^n \frac{\phi'(k)}{\phi(k)} - \log \phi(n)\right) \in \left[ - \log \phi(1), \frac{\phi'(1)}{\phi(1)}-\log\phi(1) \right]$. Hence,
\begin{equation*}
-\Log W_\phi(1+ib) = \gamma_\phi(1+ib) + \Log \phi(1+ib) - \sum_{k=1}^\infty \left( \Log\left(\frac{\phi(k)}{\phi(k+1+ib)}\right) + (1+ib)\frac{\phi'(k)}{\phi(k)}\right).
\end{equation*}
Next, for $n \geq 1$, consider the truncated functions $L_{\phi,n}$ defined by
\begin{align*}
L_{\phi,n}(1+ib)
&= \gamma_\phi(1+ib) + \Log \phi(1+ib) - \sum_{k=1}^n \left( \Log\left(\frac{\phi(k)}{\phi(k+1+ib)}\right) + (1+ib)\frac{\phi'(k)}{\phi(k)}\right) \\
&= L_{\phi,n}(1) + ib\left(\gamma_\phi - \sum_{k=1}^n \frac{\phi'(k)}{\phi(k)}\right) + \sum_{k=1}^{n+1} \Log \frac{\phi(k+ib)}{\phi(k)},
\end{align*}
where
\begin{equation*}
L_{\phi,n}(1) = \gamma_\phi - \sum_{k=1}^n \frac{\phi'(k)}{\phi(k)} + \log\phi(n+1) = \gamma_\phi - g(n),
\end{equation*}
and the last equality serves to define $g(n)$. We claim that $n \mapsto g(n)$ is non-decreasing with $\lim_{n \to \infty} g(n) = \gamma_\phi$. Indeed, we have from \cite[Proposition 4.1(4)]{patie:2015} that $\frac{1}{\phi}$ is completely monotone so that $\frac{\phi'}{\phi}$ is completely monotone, as the product of two completely monotone functions. Thus $u \mapsto \frac{\phi'(u)}{\phi(u)}$ is non-increasing, and we get that
\begin{equation*}
\log \frac{\phi(n+2)}{\phi(n+1)} = \int_{n+1}^{n+2} \frac{\phi'(u)}{\phi(u)}du \leq \frac{\phi'(n+1)}{\phi(n+1)},
\end{equation*}
which yields
\begin{equation*}
g(n+1)-g(n) = \frac{\phi'(n+1)}{\phi(n+1)} - \log \frac{\phi(n+2)}{\phi(n+1)} \geq 0.
\end{equation*}
Additionally, by \cite[Proposition 4.1(6)]{patie:2015}
\begin{equation*}
\lim_{n \to \infty} \frac{\phi(n+1)}{\phi(n)} = 1,
\end{equation*}
so that
\begin{equation*}
\lim_{n \to \infty} g(n) = \lim_{n \to \infty} \left(\sum_{k=1}^n \frac{\phi'(k)}{\phi(k)} - \log\phi(n) + \log\phi(n) - \log\phi(n+1) \right) = \gamma_\phi - \lim_{n \to \infty} \log \frac{\phi(n+1)}{\phi(n)} = \gamma_\phi.
\end{equation*}
Putting all of these observations together, we conclude that $L_{\phi,n}(1) \geq 0$. Furthermore, for any $a \in \R$ the function $b \mapsto iab$ is continuous negative-definite, and for any $1 \leq k \leq n +1$, $b \mapsto \Log \frac{\phi(k+ib)}{\phi(k)}$ is continuous negative-definite since $u \mapsto \log \frac{\phi(k+u)}{\phi(k)}$ is a Bernstein function, as the composition of two Bernstein functions, see \cite[Corollary 3.8(iii)]{schilling:2012}. This shows that $L_{\phi,n}(1+ib)$ is a continuous negative-definite function, and since $\lim_{n \to \infty} L_{\phi,n}(1+ib) = -\Log W_\phi(1+ib)$ pointwise it follows that $b \mapsto -\Log W_\phi(1+ib)$ is a continuous negative-definite function.

Consequently, using the homeomorphism $x \mapsto e^x$ between $\R$ and $(0,\infty)$, we find that there exists a unique multiplicative convolution semigroup $(\mathcal{V}_t)_{t \geq 0}$ such that
\begin{equation}
\label{eq:semigroup-mu-t}
\int_0^\infty y^{ib} \mathcal{V}_t(dy) = W_\phi^t(1+ib).
\end{equation}
From \cite[Theorem 6.1]{patie:2015} we know that $W_\phi \in \A_{(0,\infty)}$ and hence $W_\phi^t \in \A_{(0,\infty)}$ for any $t > 0$. Thus the identity in \eqref{eq:semigroup-mu-t} extends to
\begin{equation*}
\int_0^\infty y^{z-1} \mathcal{V}_t(dy) = W_\phi^t(z),
\end{equation*}
for $z \in \C_{(0,\infty)}$. However, again from \cite[Theorem 6.1]{patie:2015}, we have that $\M_{\nu_1}(z-1) = W_\phi(z)$ and thus $\mathcal{V}_1 = \nu_1$, since the Mellin transform uniquely characterizes a probability measure. By uniqueness of convolution semigroups it then follows that $\mathcal{V}_t = \nu_t$ for all $t \geq 0$, and thus \eqref{eq:Mellin transform equation} is established. Finally, from \cite[Theorem 6.1]{patie:2015} we have that $W_\phi:\C_{(0,\infty)} \to \C$ is the unique positive-definite function, i.e.~the Mellin transform of a probability measure, that satisfies the functional equation
\begin{equation*}
W_\phi(z+1) = \phi(z)W_\phi(z), \quad W_\phi(1) = 1,
\end{equation*}
for $z \in \C_{(0,\infty)}$, from which the last claim follows.

\subsection{Proofs for \Cref{subsec:smoothness}} \label{subsec:smoothness-t-proofs}

\subsubsection{Proof of \Cref{thm:smoothness-t}\ref{item-1:thm:smoothness-t}} 

It is immediate from \cite[Theorem 1.5]{berg:2004} that $\phi(\infty) = \infty$ implies $\supp(\nu_t)$ is unbounded, and we also get from \cite[Theorem 5.2(1)]{patie:2015} that $\supp(\nu_1) = [0,\infty)$. By the homeomorphism $x \mapsto e^x$ between $\R$ and $(0,\infty)$ mentioned above, together with the fact that the boundedness from below of the support of the law of a L\'evy process is time-independent, see \cite[Theorem 24.7]{sato:2013}, we then conclude that $\supp(\nu_t) = [0,\infty)$ for all $t > 0$. Hence, we suppose that $\phi(\infty) \in (0,\infty)$. To prove the claim we will rely on the following auxiliary result: for any measure $\mu$ on $\R_+$, $\supp(\mu) \subseteq [0,c]$, for $c > 0$, if and only if $\int_0^\infty x^n \mu(dx) \stackrel{\infty}{=} \bigO(c^n)$, see~\cite[Lemma 2.9]{berg:2004}. Since for any $\phi \in \B$ we have, by definition, that $\phi'$ is completely monotone it follows that all Bernstein functions are non-decreasing on $\R_+$. Thus we have, for any $n \geq 0$,
\begin{equation*}
\M_{\nu_t}(n) = \left( \prod_{k=1}^n \phi(k)\right)^t \leq \phi(\infty)^{nt}.
\end{equation*}
By the quoted result, the above estimate implies that $\supp(\nu_t) \subseteq [0,\phi(\infty)^t]$. For the reverse inclusion, let $\epsilon > 0$ be small and choose $N_{\epsilon,\phi}$ large enough (depending on $\epsilon$ and $\phi$) such that for $k \geq N_{\epsilon,\phi}-1$ we have $\phi(k) \geq \phi(\infty) - \epsilon > 0$. Then, for $n \geq N_{\epsilon,\phi}$ and again since $\phi$ is non-decreasing,
\begin{equation*}
\M_{\nu_t}(n) = \left( \prod_{k=1}^{N_{\epsilon,\phi}-1} \phi(k)\right)^t \left( \prod_{k=N_{\epsilon,\phi}}^{n} \phi(k)\right)^t \geq C_{\epsilon,\phi,t} (\phi(\infty)-\epsilon)^{nt},
\end{equation*}
where
\begin{equation*}
C_{\epsilon,\phi,t} = \frac{\phi(1)^{(N_{\epsilon,\phi}-1)t}}{(\phi(\infty)-\epsilon)^{N_{\epsilon,\phi}t}}
\end{equation*}
is a constant, which depends only on $\epsilon$, $\phi$, and $t$. Since $\epsilon > 0$ is arbitrary this estimate shows that $\supp(\nu_t)$ cannot be contained in any sub-interval of $[0,\phi(\infty)^t]$. Thus we must either have that $\supp(\nu_t) = [0,\phi(\infty)^t]$ or $\supp(\nu_t) = \delta_{\phi(\infty)^t}$, a Dirac mass at the point $\phi(\infty)^t$. In the latter case,
\begin{equation*}
\M_{\nu_t}(n) = \phi(\infty)^{nt} = \left( \prod_{k=1}^n \phi(k)\right)^t,
\end{equation*}
for all $n \geq 0$ and $t > 0$, from which it follows that $\phi$ must be constant.

\subsubsection{Proof of \Cref{thm:smoothness-t}\ref{item-2:thm:smoothness-t}}

We split the proof into two cases. First, suppose that $\mathtt{N}_\phi = \infty$, which implies that $\phi \in \B_d \cup \B_j$. Then one may invoke \cite[Theorem 4.2(3)]{patie:2016} to get that, for any $p \geq 0$ and $a > 0$,
\begin{equation*}
\lim_{|b| \to \infty} |b|^p |W_\phi(a+ib)| = 0,
\end{equation*}
where $W_\phi:\C_{(0,\infty)} \to \C$ is the Bernstein-Gamma function associated to $\phi$. Hence, for any $q \geq 0$ and $t > 0$ fixed,
\begin{equation*}
\lim_{|b| \to \infty} |b|^q |W_\phi(a+ib)|^t = 0,
\end{equation*}
which yields the estimate
\begin{equation*}
|W_\phi^t(a+ib)| \stackrel{\infty}{=} \bigO(|b|^{-q}),
\end{equation*}
uniformly on bounded $a$-intervals, i.e.~uniformly on bounded intervals of $a \in (0,\infty)$. Indeed, the functions $E_\phi$ and $R_\phi$ in \cite[Theorem 4.2]{patie:2016} are uniformly bounded for all $a > 0$ and all $\phi \in \B$, while the function $G_\phi$ in \cite[Theorem 4.2]{patie:2016} depends only on $a$ and $G_\phi(a) \leq a \log\phi(1+a)$, so that $G_\phi$ is uniformly bounded on bounded $a$-intervals, see also \cite[Remark 4.3]{patie:2016}. By \Cref{thm:Bernstein-Gamma-Mellin-transform} we know that $\M_{\nu_t}(z-1) = W_\phi^t(z)$, for $\Re(z) > 0$, so the estimate for $W_\phi^t$ established above, together with the fact that $W_\phi^t \in \A_{(0,\infty)}$, justifies the use of Mellin inversion, see e.g.~\cite{titchmarsh:1986}, to conclude that, for any $c > 0$,
\begin{equation}
\label{eq:nu-t Mellin representation}
\nu_t(x) = \frac{1}{2\pi i}\int_{c-i\infty}^{c+i\infty} x^{-z} W_\phi^t(z)dz.
\end{equation}
Note that the integrand in \eqref{eq:nu-t Mellin representation} is absolutely integrable for any $x > 0$, since $|x^{-(c+ib)}| |W_\phi^t(c+ib)| = x^{-c} |W_\phi^t(c+ib)|$ and $|W_\phi^t(c+ib)| \stackrel{\infty}{=} \bigO(|b|^{-1})$, for $|b|$ large enough. Taking $\lim_{x \to \infty} \nu_t(x)$ in \eqref{eq:nu-t Mellin representation} and using the dominated convergence theorem to interchange the limit and the integral gives that $\nu_t \in \mathtt{C}_0(\R_+)$. However, since for any $q \geq 0$ and $a > 0$, $|W_\phi^t(a+ib)| \stackrel{\infty}{=} \bigO(|b|^{-q})$, we deduce that, for any $n = 0,1,2,\ldots$, $z \mapsto z^n |W_\phi^t(z)|$ is absolutely integrable and uniformly decaying on a complex strip containing $c+n+i\R$, see e.g.~\cite[Section 1.7.4]{patie:2015}, and thus we get
\begin{equation*}
\nu_t^{(n)}(x) = \frac{(-1)^n}{2\pi i}\int_{c+n-i\infty}^{c+n+i\infty} x^{-z} \frac{\Gamma(z)}{\Gamma(z-n)} W_\phi^t(z-n) dz.
\end{equation*}
By the change of variables $z \mapsto z+n$ then yields the claimed Mellin-Barnes representation,
\begin{equation*}
\nu_t^{(n)}(x) = \frac{(-1)^n}{2\pi i}\int_{c-i\infty}^{c+i\infty} x^{-z-n} \frac{\Gamma(z+n)}{\Gamma(z)} W_\phi^t(z) dz,
\end{equation*}
where we note that the integrand is absolutely integrable by Stirling's formula for the gamma function, see \eqref{eq:stirling} below. Using the dominated convergence theorem once more to evaluate the limit at infinity yields that $\nu_t \in \mathtt{C}_0^\infty(\R_+)$. 

Next, suppose that $\phi \in \B_v$, i.e.~$\mathtt{N}_\phi = \frac{v(0^+)}{\phi(\infty)} \in (0,\infty)$. Another application of \cite[Theorem 4.2]{patie:2016} yields that, for $a > 0$ fixed and any $\epsilon > 0$,
\begin{equation*}
\lim_{|b| \to \infty} |b|^{\mathtt{N}_\phi-\epsilon} |W_\phi(a+ib)| = 0,
\end{equation*}
while
\begin{equation*}
\lim_{|b| \to \infty} |b|^{\mathtt{N}_\phi+\epsilon} |W_\phi(a+ib)| = \infty.
\end{equation*}
The first equality thus guarantees that, for $t > 0$ and any $\epsilon > 0$,
\begin{equation}
\label{eq:Mellin-decay-N_phi}
\lim_{|b| \to \infty} |b|^{\mathtt{N}_\phi t - \epsilon} |W_\phi(a+ib)|^t = 0.
\end{equation}
Now let $t > \frac{1}{\mathtt{N}_\phi}$ and observe that $\mathfrak{n}(t) = \lfloor \mathtt{N}_\phi t \rfloor - 1 \geq 0$ and is the largest integer less than or equal to $\mathtt{N}_\phi t - 1$. Choose $\epsilon$ such that $\mathtt{N}_\phi t - 1 - \mathfrak{n}(t)> \epsilon > 0$. Then, by \eqref{eq:Mellin-decay-N_phi}, it follows that, uniformly on bounded $a$-intervals, and for $|b|$ large enough
\begin{equation*}
|W_\phi(a+ib)|^t \leq C |b|^{-1- \mathfrak{n}(t)-\epsilon},
\end{equation*}
for $C > 0$ a constant. Since the right-hand side is uniformly integrable and $W_\phi^t$ is analytic on $\C_{(0,\infty)}$, another application of the Mellin inversion  formula and dominated convergence allows us to conclude that $\nu_t \in \mathtt{C}_0^{\mathfrak{n}(t)}(\R_+)$. The Mellin-Barnes representation follows as in the previous case.

\subsubsection{Proof of \Cref{thm:smoothness-t}\ref{item-3:thm:smoothness-t}}  \label{subsec:analyticity-t-proofs}

Since $\phi \in \B_{\Theta}$ we have, for any $\epsilon > 0$ and $|b|$ large enough,
\begin{equation}
\label{eq:estimate-A_phi}
A_\phi(a+ib) \geq (\Theta_\phi - \epsilon)|b|,
\end{equation}
where $A_\phi(a+ib) = \int_0^b \arg \phi(a+iu) du$. Invoking \cite[Theorem 4.2(1)]{patie:2016} gives, for any $a > 0$,
\begin{equation*}
|W_\phi(a+ib)|^t = C_{\phi,a,t} \left(\frac{\phi(a)}{|\phi(a+ib)|}\right)^\frac{t}{2} e^{-tA_\phi(a+ib)},
\end{equation*}
where $C_{\phi,a,t} > 0$ is a constant depending only on $\phi$, $a$ and $t$. Since \cite[Proposition 3.1(9)]{patie:2016} gives that $|\phi(a+ib)| \geq \phi(a)$, it follows from the estimate for $A_\phi$ in \eqref{eq:estimate-A_phi} that, for $\epsilon$ small enough such that $\Theta_\phi t - \epsilon > 0$,
\begin{equation}
\label{eq:estimate-W_phi}
|W_\phi(a+ib)|^t  \stackrel{\infty}{=} \bigO\left(e^{-(\Theta_\phi t-\epsilon)|b|}\right),
\end{equation}
where the big-$\bigO$ estimate holds pointwise in $a$, and thus uniformly on bounded $a$-intervals. By similar arguments as given in the proof of \Cref{thm:smoothness-t}\ref{item-2:thm:smoothness-t} above, it follows that $\nu_t \in \mathtt{C}_0^\infty(\R_+)$, and hence we have the Mellin-Barnes representation for $\nu_t$
\begin{equation}
\label{eq:Mellin-Barnes-2}
\nu_t(x) = \frac{1}{2\pi i}\int_{c-i\infty}^{c+i\infty} x^{-z} W_\phi^t(z)dz,
\end{equation}
for any $c > 0$. To show that $\nu_t$ is analytic on the claimed sector it suffices to analytically extend the right-hand side of \eqref{eq:Mellin-Barnes-2}, which amounts to replacing $x$ by a suitable complex number. Let $\epsilon > 0$ be fixed and consider $w \in \C$ such that $|\arg w| < \Theta_\phi t-\epsilon$. From the estimate \eqref{eq:estimate-W_phi} it follows that, for any $c > 0$ and $b \in \R$,
\begin{equation*}
|w^{-(c+ib)} W_\phi^t(c+ib)| \leq e^{|b| |\arg w|} |W_\phi(c+ib)|^t \stackrel{\infty}{=} \bigO\left(e^{-(\Theta_\phi t-\epsilon-|\arg w|)|b|}\right),
\end{equation*}
and by choice of $w$ the right-hand side is integrable in $b$. Thus the integrand on the right-hand side of \eqref{eq:Mellin-Barnes-2} is well-defined for $|\arg w| < \Theta_\phi t-\epsilon$, which by uniqueness of the analytic extension gives that $\nu_t \in \A(\Theta_\phi t-\epsilon)$. Since $\epsilon > 0$ is arbitrary we get $\nu_t \in \A(\Theta_\phi t)$, and thus for $t > \frac{\pi}{\Theta_\phi}$ we have $\nu_t \in \A(\pi)$.

\subsection{Proofs for \Cref{subsec:asymptotics}} \label{subsec:Proofs for asymptotics results}

The proof of \Cref{thm:asymptotics-infinity} combines ideas from several different areas. Hence we first state some definitions, and detail some lemmas and propositions that will be useful in the proof. We say that a function $s:(a,\infty) \to (0,\infty)$, for some $a \geq -\infty$, is \emph{self-neglecting} if
\begin{equation*}
\lim_{u \to \infty} \frac{s(u+ws(u))}{s(u)} = 1, \quad \text{locally uniformly in } w \in \R.
\end{equation*}
Furthermore, we say a function $G:(a,\infty) \to \R$ is \emph{asymptotically parabolic} if it is twice differentiable with $G'' > 0$ on $(a,\infty)$, and if its scale function $s_G(u) = (G''(u))^{-\frac{1}{2}}$ is self-neglecting. Denote the set of asymptotically parabolic functions by $\ap$ and note that it is a convex cone. A function $h:(a,\infty) \to (0,\infty)$ is said to be \emph{flat} with respect to $G$ if
\begin{equation}
\label{eq:def-flat-functions}
\lim_{u \to \infty} \frac{h(u+ws_G(u))}{h(u)} = 1, \quad \text{locally uniformly in } w \in \R,
\end{equation}
where $s_G$ is the scale function of $G$. In the following lemma we collect some properties of flat and asymptotically parabolic functions.

\begin{lemma}
\label{lem:flat-big-O}
Let $G \in \ap$ and $h$ be flat with respect to $G$.
\begin{enumerate}
\item \label{item-1:lem:flat-big-O} The function $u \mapsto 1/h(u)$ is flat with respect to $G$.
\item \label{item-2:lem:flat-big-O} For any $c > 0$, the function $u \mapsto h(cu)$ is flat with respect to $G$.
\item \label{item-3:lem:flat-big-O} The identity function is flat with respect to $G$ and, for any $\alpha > 0$, the function $u \mapsto h^\alpha(u)$ is flat with respect to $G$. In particular, for any $n \geq 0$, the function $u \mapsto u^n$ is flat with respect to $G$.
\item \label{item-4:lem:flat-big-O} The function $h$ satisfies
\begin{equation*}
\lim_{u\to \infty} \frac{\log h(u)}{G(u)} = 0.
\end{equation*}
\item \label{item-5:lem:flat-big-O} For any $c > 0$, the function $u \mapsto cG\left(\frac{u}{c}\right) \in \ap$.
\end{enumerate}
\end{lemma}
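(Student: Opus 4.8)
### Proof proposal for Lemma 2.5

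The plan is to exploit the defining relation \eqref{eq:def-flat-functions} together with the self-neglecting property of the scale function $s_G$, handling the five items essentially independently but in the order listed, since later parts lean on earlier ones. Throughout, write $s = s_G$ for brevity and recall that "locally uniformly in $w$" means uniformly for $w$ in any compact subset of $\R$.

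\textbf{Items \ref{item-1:lem:flat-big-O}, \ref{item-2:lem:flat-big-O}, \ref{item-3:lem:flat-big-O}.} For \ref{item-1:lem:flat-big-O}, simply take reciprocals in \eqref{eq:def-flat-functions}: since $h(u+ws(u))/h(u) \to 1$ locally uniformly, so does $h(u)/h(u+ws(u)) = \bigl(h(u+ws(u))/h(u)\bigr)^{-1}$, which is the flatness of $1/h$. For \ref{item-2:lem:flat-big-O}, fix $c>0$ and set $\tilde{h}(u) = h(cu)$; I need $\tilde{h}(u+ws(u))/\tilde{h}(u) = h(cu + cws(u))/h(cu) \to 1$. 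Substituting $v = cu$ this reads $h(v + cw s(v/c))/h(v)$, so the claim reduces to showing $cw\,s(v/c)$ is an admissible "flat-scale perturbation" of $v$; this follows once we know $s(v/c) \stackrel{\infty}{\sim} \text{(const)}\,s(v)$ up to the slack absorbed by the local uniformity in the perturbation parameter — more cleanly, I would prove \ref{item-5:lem:flat-big-O} first (see below) and deduce that the scale function of $u\mapsto cG(u/c)$ is $v\mapsto c\,s(v/c)$, which is self-neglecting, and note that flatness of $h$ w.r.t.\ $G$ transfers to flatness w.r.t.\ any asymptotically parabolic function with a comparable scale function. For \ref{item-3:lem:flat-big-O}: the identity function is flat because $(u+ws(u))/u = 1 + w s(u)/u \to 1$ locally uniformly, using the standard fact that a self-neglecting function satisfies $s(u) = \littleo(u)$ (this is part of Balkema's theory and can be cited from \cite{balkema:1993,balkema:1995} or proved in two lines from the self-neglecting relation). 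Then $h^\alpha$ is flat by raising \eqref{eq:def-flat-functions} to the power $\alpha$ and using continuity of $x\mapsto x^\alpha$ at $1$; taking $h = \mathrm{id}$ gives $u\mapsto u^n$.

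\textbf{Item \ref{item-4:lem:flat-big-O}.} This is the substantive one and I expect it to be the main obstacle. The idea is that flatness of $h$ forces $\log h$ to vary slowly on the scale $s$, while $G$ grows like the integral of $1/s^2$ (since $s = (G'')^{-1/2}$), hence much faster. Concretely, from \eqref{eq:def-flat-functions} one gets $\log h(u + ws(u)) - \log h(u) \to 0$ locally uniformly in $w$; I would upgrade this to an increment bound of the form $|\log h(v) - \log h(u)| = \littleo\!\bigl(\text{number of }s\text{-steps from }u\text{ to }v\bigr)$ by chaining, and compare the number of such steps to $G(u)$. A cleaner route: a self-neglecting $s$ admits the representation $s(u) = \littleo(u)$ with $\int^u dr/s(r) \to \infty$, and asymptotic parabolicity gives $G'(u) = \int^u dr/s^2(r)$ up to lower-order terms, so $G(u)$ grows at least like $\int^u\!\!\int^r dq\,dr / s^2(q)$, which dominates any quantity growing linearly in the step-count $\int^u dr/s(r)$. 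Matching these asymptotics carefully — and in particular justifying that $\log h$ grows at most like the step-count — is where the care is needed; I would lean on the precise flatness/self-neglecting estimates available in \cite[Section 4]{balkema:1995} and \cite[Section 5.5]{patie:2015} rather than reprove them.

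\textbf{Item \ref{item-5:lem:flat-big-O}.} Fix $c>0$ and put $\tilde{G}(u) = cG(u/c)$. Then $\tilde{G}'' (u) = c^{-1} G''(u/c) > 0$, so $\tilde{G}$ is twice differentiable with positive second derivative, and its scale function is $s_{\tilde G}(u) = (\tilde G''(u))^{-1/2} = c^{1/2}\, s_G(u/c)$. It remains to check $s_{\tilde G}$ is self-neglecting: compute
\begin{equation*}
\frac{s_{\tilde G}\bigl(u + w\,s_{\tilde G}(u)\bigr)}{s_{\tilde G}(u)} = \frac{s_G\bigl(u/c + w c^{-1/2} s_G(u/c)\bigr)}{s_G(u/c)},
\end{equation*}
and substituting $v = u/c$ this is exactly the self-neglecting ratio for $s_G$ with perturbation parameter $wc^{-1/2}$, which tends to $1$ locally uniformly in $w$ since $s_G$ is self-neglecting. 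Hence $\tilde G \in \ap$. This item is purely a change-of-variables computation and carries no real difficulty; I would present it first in the written proof so that it can be invoked in \ref{item-2:lem:flat-big-O}.
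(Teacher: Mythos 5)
Your proposal follows essentially the same route as the paper's proof: items (1), (3) and (5) are handled by the same direct manipulations (taking reciprocals, using $s_G(u) \stackrel{\infty}{=} \littleo(u)$ for the identity function, and the change of variables $v = u/c$ giving the scale function $\sqrt{c}\,s_G(u/c)$ for $u \mapsto cG(u/c)$; your perturbation parameter $w/\sqrt{c}$ is in fact the correct one, the paper's display has $\sqrt{c}\,w$, which is immaterial). The differences are minor but worth recording. For (2) the paper simply substitutes $v = cu$ and appeals to local uniformity in $w$, silently exchanging $s_G(u)$ for $s_G(cu)$ in the process; you make this sticking point explicit -- one needs $s_G(v/c)$ comparable to $s_G(v)$, or equivalently a transfer of flatness between asymptotically parabolic functions with comparable scale functions -- but you assert rather than establish that comparability, which is not automatic for a general self-neglecting function (it does hold for all the scale functions actually arising in the paper). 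Since the paper's own one-line argument elides exactly the same point, this is not a loss relative to the paper; note only the small slip that in your item (2) you quote the scale of $u \mapsto cG(u/c)$ as $c\,s_G(u/c)$, whereas your own computation in (5) correctly gives $\sqrt{c}\,s_G(u/c)$. For (4), which you rightly single out as the substantive item, the paper discharges the claim by citing \cite[Lemma 3.1]{patie:2018a}; your chaining/step-counting sketch (increments of $\log h$ are $\littleo(1)$ over $s_G$-steps, and the step count is dominated by the growth of $G$) is the mechanism behind that lemma, and deferring the quantitative self-neglecting estimates to \cite{balkema:1993,balkema:1995} and \cite{patie:2015} is an acceptable, if heavier, substitute for the paper's citation.
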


\begin{proof}
The first claim is obvious from the definition in \eqref{eq:def-flat-functions}. Let $c > 0$ and consider the function $h_c$ defined by $h_c(u) = h(cu)$. Then, writing $v = cu$,
\begin{equation*}
\lim_{u \to \infty} \frac{h_c(u+ws_G(u))}{h_c(u)} = \lim_{u \to \infty} \frac{h(cu+cws_G(cu))}{h(cu)} = \lim_{v \to \infty} \frac{h(v+cws_G(v))}{h(v)} =1,
\end{equation*}
where the last limit follows from fact that \eqref{eq:def-flat-functions} holds locally uniformly for $w \in \R$. For the third claim, note that $s_G(u) \stackrel{\infty}{=} \littleo(u)$, see e.g.~\cite[Lemma 3.1]{patie:2018a} so that, locally uniformly in $w \in \R$,
\begin{equation*}
\lim_{u \to \infty} \frac{u+ws_G(u)}{u} = 1 + w\lim_{u \to \infty} \frac{s_G(u)}{u} = 1.
\end{equation*}
The fact that, for $\alpha > 0$, $u \mapsto h^\alpha(u)$ is flat follows trivially from the definition, and the proof of the fourth item is essentially known in the literature, see again~\cite[Lemma 3.1]{patie:2018a}. Finally, for the proof of the last claim, write $\tilde{G}(u) = cG_c\left(\frac{u}{c}\right)$ and $s_{\tilde{G}}$ for the corresponding scale function. Then $s_{\tilde{G}}(u) = \sqrt{c}s_G\left(\frac{u}{c}\right)$ so that, for $w \in \R$,
\begin{equation*}
\frac{s_{\tilde{G}}(u+ws_{\tilde{G}}(u))}{s_{\tilde{G}}(u)} = \frac{s_G\left(\frac{u}{c}+ \sqrt{c} w s_G\left(\frac{u}{c}\right)\right)}{s_G\left(\frac{u}{c}\right)}
\end{equation*}
and the self-neglecting property of $s_G$ carries over readily to $s_{\tilde{G}}$.
\end{proof}

In the next lemma we collect some properties about the specific asymptotically parabolic functions that will play a role in the proof of \Cref{thm:asymptotics-infinity}. To state it we recall that the Legendre transform of a convex function $\psi:\R \to \R$, which we denote as $\Leg_\psi$, is given by
\begin{equation*}
\Leg_\psi(y) = \sup_{u \in \R} \{uy - \psi(u)\}.
\end{equation*}
If in addition $\psi \in \mathtt{C}^1(\R)$ then the above supremum is achieved at the unique point $u = \psi'^{-1}(y)$, and hence
\begin{equation*}
\Leg_\psi(y) = y\psi'^{-1}(y) - \psi(\psi'^{-1}(y)).
\end{equation*}
The variables $u$ and $y$ obeying the relations $y = \psi'(u)$ and $u = \psi'^{-1}(y)$ are called conjugate variables.

\begin{lemma}
\label{lem:self-neglecting}
Let $\phi \in \B_\mc{J}$ be such that $\phi(\infty) = \infty$. Then the function $s_G:\R_+ \to \R_+$ defined by
\begin{equation*}
s_G(u) = \sqrt{\frac{\phi(u)}{\phi'(u)}}
\end{equation*}
is self-neglecting, and consequently $G \in \ap$, where $G:(1,\infty) \to \R$ is the function defined by
\begin{equation}
\label{eq:def-G}
G(u) = \int_1^u \log \phi(r) dr + \log \phi(1).
\end{equation}
The Legendre transform of $G$ is given by
\begin{equation*}
\Leg_G(y) = \int_\sf{k}^{e^y} \frac{\varphi(r)}{r} dr - \int_\sf{k}^{\phi(1)} \frac{\varphi(r)}{r} dr
\end{equation*}
where $\varphi:[\sf{k},\infty) \to [0,\infty)$ is the continuous inverse of $\phi$, and $y$ and $u$ are conjugate variables related by $y = \log \phi(u)$ and $u = \varphi(e^y)$. Furthermore, $\Leg_G \in \ap$.
\end{lemma}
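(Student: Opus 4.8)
The plan is to take Part~1 as the heart of the matter and to deduce Parts~2--4 from it by routine manipulations. First observe that $s_G$ is a well-defined map $\R_+\to\R_+$: since $\phi(\infty)=\infty$ the function $\phi$ is non-constant, hence $\phi>0$ and $\phi'>0$ on $\R_+$ (were $\phi'(u_0)=0$ for some $u_0>0$, complete monotonicity of $\phi'$ would force $\phi'\equiv 0$ on $[u_0,\infty)$, contradicting $\phi(\infty)=\infty$). To prove that $s_G$ is self-neglecting I would verify the standard sufficient condition that a positive $\mathtt{C}^1$-function $s$ with $s'(u)\to 0$ is self-neglecting; this also gives $s(u)=\littleo(u)$ by Ces\`aro averaging, so the perturbed points $u+ws_G(u)$ remain in the domain. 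A direct computation from $s_G^2=\phi/\phi'$ yields
\begin{equation*}
s_G'(u)=\frac{1}{2s_G(u)}\Bigl(1-\frac{\phi(u)\phi''(u)}{\phi'(u)^2}\Bigr)=\frac{1}{2s_G(u)}+\frac{-\phi''(u)}{2\phi'(u)}\,s_G(u),
\end{equation*}
so it suffices to show (a) $s_G(u)\to\infty$ and (b) $\tfrac{-\phi''(u)}{\phi'(u)}\,s_G(u)\to 0$. For (a): concavity of $\phi$ together with $u\phi'(u)\le\phi(u)$ forces $\phi(u)=\bigO(u)$ (as in \cite[Proposition 4.1]{patie:2015}), hence $\log\phi(u)=\bigO(\log u)$; since $(\log\phi)'=\phi'/\phi$ is completely monotone, in particular non-increasing, and its primitive $\log\phi$ is of order $\log u$, one gets $\phi'(u)/\phi(u)\to 0$, i.e.\ $s_G(u)\to\infty$.

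For (b) the Jurek structure enters. Writing $\mu(dy)=v(y)dy$ with $v$ non-increasing, the substitution $y\mapsto 2y$ and $v(2z)\le v(z)$ give $\phi'(u/2)-\sf{d}\le 4\bigl(\phi'(u)-\sf{d}\bigr)$, while $y^2e^{-uy}\le\tfrac{2}{eu}\,ye^{-uy/2}$ (using $\sup_{t\ge0}te^{-t}=e^{-1}$), so that
\begin{equation*}
-\phi''(u)=\int_0^\infty y^2e^{-uy}\mu(dy)\le\frac{2}{eu}\,\bigl(\phi'(u/2)-\sf{d}\bigr)\le\frac{8}{eu}\,\phi'(u).
\end{equation*}
Hence $\tfrac{-\phi''(u)}{\phi'(u)}\,s_G(u)\le\tfrac{8}{eu}\,s_G(u)$, and (b) reduces to the statement $s_G(u)=\littleo(u)$, equivalently $u^2(\log\phi)'(u)\to\infty$. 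This last point is the genuine obstacle: it is not a consequence of concavity alone but of the finer regularity of Bernstein functions in the Jurek class, and I would establish it by appealing to the asymptotic analysis of Patie--Savov \cite{patie:2015} (where exactly this asymptotic parabolicity underlies the time-$1$ density asymptotics of \cite[Theorem 5.5]{patie:2015}), so that $s_G$ is self-neglecting. Part~2 then follows immediately, since $G'(u)=\log\phi(u)$, $G''(u)=\phi'(u)/\phi(u)>0$ on $(1,\infty)$, and $s_G=(G'')^{-1/2}=\sqrt{\phi/\phi'}$.

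For Part~3, since $G$ is $\mathtt{C}^1$ and strictly convex with $G'(u)=\log\phi(u)$, the conjugate variable is $(G')^{-1}(y)=\varphi(e^y)$, so $\Leg_G(y)=y\varphi(e^y)-G(\varphi(e^y))$. Substituting $r=\varphi(\rho)$ (so $\rho=\phi(r)$, $\log\phi(r)=\log\rho$, $dr=\varphi'(\rho)d\rho$) in the defining integral of $G$ and integrating by parts gives
\begin{equation*}
G(\varphi(e^y))=\int_{\phi(1)}^{e^y}\log\rho\,\varphi'(\rho)\,d\rho+\log\phi(1)=y\varphi(e^y)-\int_{\phi(1)}^{e^y}\frac{\varphi(\rho)}{\rho}\,d\rho,
\end{equation*}
whence $\Leg_G(y)=\int_{\phi(1)}^{e^y}\tfrac{\varphi(\rho)}{\rho}\,d\rho=\int_{\sf{k}}^{e^y}\tfrac{\varphi(\rho)}{\rho}\,d\rho-\int_{\sf{k}}^{\phi(1)}\tfrac{\varphi(\rho)}{\rho}\,d\rho$, the integrals near $\rho=\sf{k}$ being finite since $\varphi(\sf{k})=0$ (alternatively, both sides have derivative $\varphi(e^y)$ and vanish at $y=\log\phi(1)$).

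Finally, for Part~4, differentiating once more gives $\Leg_G''(y)=\varphi'(e^y)e^y=\phi(u)/\phi'(u)=s_G(u)^2>0$ where $u=\varphi(e^y)$, so $s_{\Leg_G}(y)=1/s_G(\varphi(e^y))$. Since the conjugate change of variables $u=\varphi(e^y)$ satisfies $du/dy=s_G(u)^2$, the increment $y\mapsto y+w\,s_{\Leg_G}(y)=y+w/s_G(u)$ maps to $u\mapsto u+w\,s_G(u)\,(1+\littleo(1))$ (the $\littleo(1)$ controlled, via a continuity bootstrap, by the self-neglecting property of $s_G$ over the scale $w\,s_G(u)$), and therefore
\begin{equation*}
\frac{s_{\Leg_G}(y+w\,s_{\Leg_G}(y))}{s_{\Leg_G}(y)}=\frac{s_G(u)}{s_G\bigl(u+w\,s_G(u)(1+\littleo(1))\bigr)}\longrightarrow 1
\end{equation*}
as $y\to\infty$, locally uniformly in $w$, by Part~1; alternatively one invokes the general duality that $\ap$ is stable under the Legendre transform, as developed in \cite{patie:2018a}. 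I expect the $\littleo(u)$ / self-neglecting claim of Part~1 to be the main difficulty, the remaining parts being essentially bookkeeping.
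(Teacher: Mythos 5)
Your Parts 2--4 are essentially the paper's own argument: the trivial identification of $s_G$ as the scale function of $G$, the integration-by-parts/change-of-variables computation of $\Leg_G$, and, for $\Leg_G\in\ap$, the closure of $\ap$ under the Legendre transform --- note that this duality is \cite[Theorem 5.3]{balkema:1993} (which is what the paper cites), not a result ``developed in \cite{patie:2018a}''; your direct computation $s_{\Leg_G}(y)=1/s_G(\varphi(e^y))$ is consistent with it but the $(1+\littleo(1))$ bootstrap you gesture at is precisely what that theorem packages. The genuine gap is in Part 1, exactly where you flag it. Your reduction is correct as far as it goes: the criterion ``$s_G'\to 0$ implies self-neglecting'', the identity $s_G'=\frac{1}{2s_G}+\frac{-\phi''}{2\phi'}s_G$, step (a), and the Jurek-class bound $-\phi''(u)\le\frac{8}{eu}\phi'(u)$ are all fine. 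But everything then hinges on $s_G(u)\stackrel{\infty}{=}\littleo(u)$, equivalently $u^2\phi'(u)/\phi(u)\to\infty$, and for this you offer only a vague appeal to ``the asymptotic analysis of Patie--Savov''. No precise statement is identified, and the natural candidate there is \cite[Proposition 5.40]{patie:2015}, which \emph{is} the self-neglecting property --- the paper's proof of this part consists exactly of citing that proposition and observing that its hypothesis $\sf{k}>0$ is inessential. Worse, $s_G=\littleo(u)$ is ordinarily deduced \emph{from} the self-neglecting property (that is how it is used via \cite[Lemma 3.1]{patie:2018a} elsewhere in the paper), so as written your argument is circular unless this step is proved independently.

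The gap is fillable by an elementary argument you could have supplied, which would make your route genuinely more self-contained than the paper's citation. If $\sf{d}>0$ the claim is immediate since $\phi(u)\asymp\sf{d}u$ and $\phi'(u)\ge\sf{d}$; if $\sf{d}=0$ then $\phi(\infty)=\infty$ forces $v\notin\Leb^1(0,1)$, so $v(x)\to\infty$ as $x\to0^+$. Set $N(x)=\int_0^x yv(y)dy$ and $V(x)=\int_x^\infty v(y)dy$. Monotonicity gives $N(x)\ge\frac{x^2}{2}v(x)$, and for any fixed $z\in(0,1]$, $V(x)\le v(x)z+V(z)$, whence $\liminf_{x\to0}v(x)/V(x)\ge 1/z$ for every $z$, i.e.\ $v(x)/V(x)\to\infty$; therefore $x^{-2}N(x)\gg V(x)$. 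Combining with the standard bounds $\phi(u)\le\sf{k}+uN(1/u)+V(1/u)$ and $\phi'(u)\ge e^{-1}N(1/u)$ yields $u^2\phi'(u)/\phi(u)\to\infty$, i.e.\ $s_G(u)=\littleo(u)$, and your (b) then closes. Without some such argument (or at least the precise citation the paper uses), the pivotal claim of the lemma is asserted rather than proved.
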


\begin{proof}
The fact that $s_G$ is self-neglecting was proved in \cite[Proposition 5.40]{patie:2015} under the additional condition that $\sf{k} = \phi(0) > 0$. However, an inspection of the proof reveals that this property is not crucial for the self-neglecting property of $s_G$. Differentiating $G$ twice shows that $s_G$ is indeed the scale function of $G$, and hence $G \in \ap$.

Taking derivatives in \eqref{eq:def-G} we get $G'(u) = \log \phi(u)$ so that the conjugate variables are $y = \log \phi(u)$ and $u = \varphi(e^y)$. Also, by integration by parts we can rewrite $G$ as
\begin{equation*}
G(u) = u\log \phi(u) - \int_1^u \frac{r\phi'(r)}{\phi(r)}dr.
\end{equation*}
Hence,
\begin{equation*}
\Leg_G(y) = y \varphi(e^y) - G(\varphi(e^y)) = \int_1^{\varphi(e^y)} \frac{r\phi'(r)}{\phi(r)} dr = \int_{\phi(1)}^{e^y} \frac{\varphi(r)}{r} dr = \int_{\sf{k}}^{e^y} \frac{\varphi(r)}{r} dr  - \int_\sf{k}^{\phi(1)} \frac{\varphi(r)}{r} dr
\end{equation*}
where the third equality follows by the change of variables $r=\varphi(w)$. Finally, the fact that $\Leg_G \in \ap$ follows from a closure property of $\ap$ with respect to the Legendre transform, see~\cite[Theorem 5.3]{balkema:1993}.
\end{proof}

In the final lemma before the proof we collect some properties concerning additive convolution, especially a stability property for Gaussian tails under additive convolution. We write $*$ for the additive convolution of suitable functions $f, g : \R \to \R$, that is
\begin{equation*}
(f*g)(x) = \int_{-\infty}^\infty f(x-y)g(y)dy = \int_{-\infty}^\infty f(y)g(x-y)dy,
\end{equation*}
with the additive convolution of measures being defined similarly. A probability density $f$ is said to have a \emph{Gaussian tail} if $f(y) \stackrel{\infty}{\sim} \eta(y)e^{-\psi(y)}$ for some $\psi \in \ap$ and some $\eta$ flat with respect to $\psi$.

\begin{lemma}
\label{lem:tail-asymptotics} $ $
\begin{enumerate}
\item \label{item-1:lem:tail-asymptotics} Let $(\nu_t)_{t \geq 0}$ be a multiplicative convolution semigroup and let, for each $t > 0$, $f_t$ be the pushforward measure under the map $x \mapsto \log x$. Then $(f_t)_{t \geq 0}$ is an additive convolution semigroup, i.e.~for $t,s \geq 0$, $f_t * f_s = f_{t+s}$.
\item \label{item-2:lem:tail-asymptotics} Let $f, g \in \Leb^1(\R)$ be such that $f(y) \stackrel{\infty}{\sim} e^{-\psi_1(y)}$ and $g(y) \stackrel{\infty}{\sim} e^{-\psi_2(y)}$, for some $\psi_1, \psi_2$ with $\lim_{y \to \infty} \psi_1'(y) = \lim_{y\to \infty} \psi_2'(y) = \infty$. Then $(f*g)(y) \stackrel{\infty}{\sim} (e^{-\psi_1}*e^{-\psi_2})(y)$.
\item \label{item-3:lem:tail-asymptotics} Let $f$ and $g$ be probability densities with Gaussian tails, that is $f(y) \stackrel{\infty}{\sim} \eta_1(y)e^{-\psi_1(y)}$ and $g(y) \stackrel{\infty}{\sim} \eta_2(y)e^{-\psi_2(y)}$, and suppose that we have $\lim_{y \to \infty} \psi_1'(y) = \lim_{y\to \infty} \psi_2'(y) = \infty$. Then $f*g$ has a Gaussian tail, i.e.~$(f*g)(y) \stackrel{\infty}{\sim} \eta_0(y)e^{-\psi_0(y)}$ for some $\psi_0 \in \ap$ and some $\eta_0$ flat with respect to $\psi_0$. Specifically, writing $y(u) = q_1 + q_2 = \psi_1'^{-1}(u) + \psi_2'^{-1}(u)$, we have
\begin{align*}
\psi_0(y) &= \psi_1(q_1) + \psi_2(q_2) \\
\eta_0(y) &= \frac{\sqrt{2\pi} s_{\psi_1}(q_1)\eta_1(q_1) s_{\psi_2}(q_2)\eta_2(q_2)}{\sqrt{s_{\psi_1}^2(q_1) + s_{\psi_2}^2(q_2)}}.
\end{align*}
In particular, for $d \geq 1$, the $d$-fold convolution of $f$ with itself $f^{*d}$ satisfies
\begin{equation*}
f^{*d}(y) \stackrel{\infty}{\sim} \frac{1}{\sqrt{d}} \left(\frac{2\pi}{\psi_1''\left(\frac{y}{d}\right)}\right)^{\frac{d-1}{2}} f\left(\frac{y}{d}\right)^d.
\end{equation*}
\end{enumerate}
\end{lemma}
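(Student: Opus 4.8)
The plan is to treat the three parts separately, since they are of rather different natures. Part \ref{item-1:lem:tail-asymptotics} I would prove by a one-line change of variables. If $X_s,X_t$ are independent positive random variables with laws $\nu_s,\nu_t$, then, by definition of a multiplicative convolution semigroup, $X_sX_t$ has law $\nu_{s+t}$; since $x\mapsto\log x$ is a homeomorphism of $(0,\infty)$ onto $\R$, the pushforward of $\nu_{s+t}$ under it is $f_{s+t}$, while $\log(X_sX_t)=\log X_s+\log X_t$ is a sum of independent variables with laws $f_s$ and $f_t$, hence has law $f_s*f_t$; comparing the two gives $f_s*f_t=f_{s+t}$.

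For part \ref{item-2:lem:tail-asymptotics} the idea is that one may substitute $f,g$ by $e^{-\psi_1},e^{-\psi_2}$ inside the convolution. After a routine truncation I may assume $f=e^{-\psi_1}(1+\rho_1)$, $g=e^{-\psi_2}(1+\rho_2)$ with $\rho_i$ bounded and $\rho_i(y)\to 0$ as $y\to\infty$. Since $\psi_i'\to\infty$, each $\psi_i$ is eventually strictly increasing to $\infty$, the map $G_y(t):=\psi_1(y-t)+\psi_2(t)$ is convex with a unique minimiser $t^*(y)$ determined by $\psi_1'(y-t^*)=\psi_2'(t^*)$, and both $t^*(y)\to\infty$ and $y-t^*(y)\to\infty$ (the common value attained there tends to $\infty$ and $(\psi_i')^{-1}$ is finite valued). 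Now $(f*g)(y)-(e^{-\psi_1}*e^{-\psi_2})(y)=\int_\R e^{-G_y(t)}\bigl(\rho_1(y-t)+\rho_2(t)+\rho_1(y-t)\rho_2(t)\bigr)\,dt$; on a window $\lvert t-t^*(y)\rvert\le\Delta(y)$, with $\Delta(y)\to\infty$ but $\Delta(y)=\littleo(\min(t^*(y),y-t^*(y)))$, both arguments are large, so the bracket is $\littleo(1)$ uniformly and this contribution is $\littleo\bigl((e^{-\psi_1}*e^{-\psi_2})(y)\bigr)$, while off the window the bracket is bounded and one is reduced to showing that the mass of $e^{-G_y}$ outside $[t^*-\Delta,t^*+\Delta]$ is negligible against $\int_\R e^{-G_y}$. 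This last point is \emph{the main obstacle}, made harder by the fact that here we assume only $\psi_i'\to\infty$ and not that $\psi_i$ is asymptotically parabolic; I would extract it from the convexity of $G_y$ alone, using the tangent-line bound $\int_{t>t^*+\Delta}e^{-G_y}\le e^{-G_y(t^*+\Delta)}/G_y'(t^*+\Delta)$ together with the increment estimate $G_y(t^*+\Delta)-G_y(t^*)\ge\tfrac12\Delta\,G_y'(t^*+\tfrac12\Delta)$, which, combined with $\psi_i'\to\infty$ and an appropriate choice of $\Delta(y)$, forces this tail to be $\littleo\bigl(\int_{t^*}^{t^*+1}e^{-G_y}\bigr)=\littleo\bigl(\int_\R e^{-G_y}\bigr)$.

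Part \ref{item-3:lem:tail-asymptotics} is the substantive closure result, and its qualitative half — that $f*g$ again has a Gaussian tail — is exactly \cite[Theorem 1.1]{balkema:1993}. Granting this and part \ref{item-2:lem:tail-asymptotics}, identifying $\psi_0$ and $\eta_0$ reduces to a Laplace evaluation of $(e^{-\psi_1}*e^{-\psi_2})(y)$, which I would perform via the substitution $t=q_2+s_{\psi_2}(q_2)w$, where $q_1,q_2$ are the conjugate points with $q_1+q_2=y$ and $\psi_1'(q_1)=\psi_2'(q_2)$: the linear terms in the Taylor expansion of $\psi_1(y-t)+\psi_2(t)$ about $t=q_2$ cancel by this saddle relation, the quadratic coefficient equals $\tfrac12\bigl(s_{\psi_1}^{-2}(q_1)+s_{\psi_2}^{-2}(q_2)\bigr)$, and the self-neglecting property of $s_{\psi_1},s_{\psi_2}$ together with the flatness of $\eta_1,\eta_2$ (\Cref{lem:flat-big-O}) justify passing the limit through the integral; the resulting Gaussian integral produces precisely the normalisation $\sqrt{2\pi}/\sqrt{s_{\psi_1}^2(q_1)+s_{\psi_2}^2(q_2)}$ appearing in $\eta_0$. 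The $d$-fold formula then follows by induction on $d$ applied to $f^{*(d-1)}*f$: the relation $\psi_1'(q_1)=\psi_2'(q_2)$ forces the conjugate points to divide $y$ in the ratio $(d-1):1$, whence $\psi_0(y)=d\,\psi_1(y/d)$, and the prefactors multiply up — using $s_{\psi_1}^2=1/\psi_1''$, \Cref{lem:flat-big-O}\ref{item-5:lem:flat-big-O} to re-enter the induction, and \Cref{lem:flat-big-O} once more to keep the prefactor flat with respect to the new exponent — to $\tfrac{1}{\sqrt d}\bigl(2\pi/\psi_1''(y/d)\bigr)^{(d-1)/2}$.
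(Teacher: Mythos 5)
Your parts \ref{item-1:lem:tail-asymptotics} and \ref{item-3:lem:tail-asymptotics} are in line with the paper: (1) is the same change-of-variables observation, and for (3) the paper does not prove anything at all --- it quotes \cite[Theorem 1.1 and (1.11)]{balkema:1993}, which already contains the explicit $\psi_0$, $\eta_0$ and the $d$-fold formula. Your Laplace-method re-derivation of these constants is plausible (the hypotheses of (3) do give $\psi_i\in\ap$, so the saddle relation, the quadratic expansion and the flatness bookkeeping are available), but it is extra work the paper deliberately avoids, and its uniformity details are exactly what \cite{balkema:1993} supplies.

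The genuine gap is in your argument for part \ref{item-2:lem:tail-asymptotics}, in two places. First, the hypotheses there are only that $f,g\in\Leb^1(\R)$, $f(y)\stackrel{\infty}{\sim}e^{-\psi_1(y)}$, $g(y)\stackrel{\infty}{\sim}e^{-\psi_2(y)}$ and $\psi_1',\psi_2'\to\infty$; there is no convexity of $\psi_1,\psi_2$, no monotonicity or invertibility of $\psi_i'$, and $\psi_i$ need only be defined near $+\infty$. Consequently $G_y(t)=\psi_1(y-t)+\psi_2(t)$ need not be convex and need not have a (unique) minimiser $t^*(y)$, so the tangent-line and increment bounds that carry your off-window estimate are simply not available; your own remark that you would ``extract it from the convexity of $G_y$ alone'' assumes precisely what is missing. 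Second, the opening reduction ``after a routine truncation I may assume $f=e^{-\psi_1}(1+\rho_1)$ with $\rho_1$ bounded'' is not routine: it amounts to a global domination $|f|\leq Ce^{-\psi_1}$ on all of $\R$, which the hypotheses do not give (take $f$ with a heavy left tail, e.g. $f(y)=e^{-\psi_1(y)}+|y|^{-3/2}\mathbb{I}_{\{y<-1\}}$), and justifying the discarded left-tail contribution is exactly the content of the claim. The paper's proof, following \cite[Proposition 2.2]{balkema:1993}, does only this and needs no saddle-point analysis: split the convolution at a fixed level $a$, bound the edge terms by $C e^{-\psi_1(x-a)}$ (and symmetrically), and compare with the contribution of the fixed window $[a+1,a+2]$, which by the mean value theorem is at least $c_g e^{-\psi_1(x-a)}e^{\psi_1'(x-z)}$ with $\psi_1'(x-z)\to\infty$. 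That argument works under the weak hypotheses of (2); your window-concentration scheme does not, and would at best prove (2) under the additional structure of (3).
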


Before giving the proof, we note that \Cref{item-2:lem:tail-asymptotics} of \Cref{lem:tail-asymptotics} gives conditions under which the asymptotics of the convolution of integrable functions can be identified from the asymptotics of the functions themselves. On the other hand, \Cref{item-3:lem:tail-asymptotics} states that Gaussian tails are closed under additive convolution and allows one to identify the asymptotic explicitly, this latter feature being particularly useful. The statement of \Cref{lem:tail-asymptotics}\ref{item-3:lem:tail-asymptotics} is the content of \cite[Theorem 1.1 and (1.11)]{balkema:1993}, and our aim, in incorporating it as an item of a lemma, is merely to improve the clarity and presentation of the proof of \Cref{thm:asymptotics-infinity}.

\begin{proof}
The first claim is straightforward. The proof of \Cref{item-2:lem:tail-asymptotics} is in the spirit of the proof of \cite[Proposition 2.2]{balkema:1993}. Since $f$ and $g$ are asymptotic to positive functions it follows that they are themselves eventually positive. This, and the other properties of $\psi_1$ and $\psi_2$, allows us to choose $a > 0$ large enough such that: (1) both $\psi_1$ and $\psi_2$ are well-defined on $(a,\infty)$, (2) $\psi'_1, \psi_2' > 0$ on $(a,\infty)$, (3) $\int_{-\infty}^a |g(y)|dy \neq 0$ and $\int_{-\infty}^a |f(y)|dy \neq 0$, and (4) $c_g = \int_{a+1}^{a+2} g(x) dx > 0$ and $c_f = \int_{a+1}^{a+2} f(x)dx > 0$. For $x > 2a$,
\begin{equation*}
(f*g)(x) = \int_a^{x-a} f(x-y)g(y)dy + \int_{-\infty}^a f(x-y)g(y)dy + \int_{-\infty}^a f(y)g(x-y)dy,
\end{equation*}
so by symmetry it suffices to show that $\int_{-\infty}^a e^{-\psi_1(x-y)}g(y)dy$ is of order $\littleo\left(\int_a^{x-a} f(x-y)g(y)dy\right)$ at infinity. Since $\psi_1' > 0$ on $(a,\infty)$
\begin{equation*}
\left|\int_{-\infty}^a e^{-\psi_1(x-y)}g(y)dy \right| \leq C e^{-\psi_1(x-a)}
\end{equation*}
with $C = \int_{-\infty}^a |g(y)|dy \neq 0$ a constant. By the mean value theorem,
\begin{align*}
\int_{a+1}^{a+2} e^{-\psi_1(x-y)} g(y)dy &\geq c_g e^{-\psi_1(x-a-1)} = c_g e^{-\psi_1(x-a)} e^{\psi_1'(x-z)} \\
&\geq \frac{c_g}{C} e^{\psi_1'(x-z)} \left|\int_{-\infty}^a e^{-\psi_1(x-y)}g(y)dy\right|,
\end{align*}
with $|z| \leq a+1$, and letting $x \to \infty$ finishes the proof of the second claim. Finally, \Cref{item-3:lem:tail-asymptotics} is the content of \cite[Theorem 1.1 and (1.11)]{balkema:1993}.
\end{proof}

\subsubsection{Proof of \Cref{thm:asymptotics-infinity}\ref{item-1:thm:asymptotics-infinity}} \label{subsec:asymptotics-infinity-proofs-1}

For convenience we write $\alpha$ in place of $\mathfrak{t}$ and thus our assumption is that $\phi$ is a Bernstein function such that $\phi(\infty) = \infty$ and $\phi^\alpha \in \B_\J$, for all $\alpha \in (0,1)$. We write $(\nu_t)_{t \geq 0}$ for the Berg-Urbanik semigroup associated to $\phi$ and, for any $\alpha \in (0,1)$, let $(\bar{\nu}_t)_{t > 0}$ denote the Berg-Urbanik semigroup associated to $\phi^\alpha$. Then, for $n \geq 0$ and any $\alpha \in (0,1)$, we have by the moment determinacy of any Berg-Urbanik semigroup up to time 2 that
\begin{equation*}
\M_{\bar{\nu}_1}(n) = \prod_{k=1}^n \phi^\alpha(k) = \left(\prod_{k=1}^n \phi(k)\right)^{\alpha } = \M_{\nu_{\alpha}}(n),
\end{equation*}
and applying \cite[Theorem 2.2]{berg:2007} then gives that $(\bar{\nu}_t)_{t \geq 0} = (\nu_{\alpha t})_{t \geq 0}$. Since $\phi^\alpha \in \B_\J$, for any $\alpha \in (0,1)$, and plainly $\phi(\infty) = \infty$ implies $\phi^\alpha(\infty) = \infty$, we conclude that $\mathtt{N}_{\phi^\alpha} = \infty$. Invoking \Cref{thm:smoothness-t}\ref{item-2:thm:smoothness-t} then yields, for any $t > 0$ and $\alpha \in (0,1)$, $\bar{\nu}_t \in \mathtt{C}_0^\infty(\R_+)$, from which we deduce that $\nu_t \in \mathtt{C}_0^\infty(\R_+)$, where $\nu_t(dx) = \nu_t(x)dx$, $x,t > 0$. Since $\phi^\alpha \in \B_\J$ with $\phi^\alpha(\infty) = \infty$ we may apply \cite[Theorem 5.5]{patie:2015} to obtain, for any $n \geq 0$, the asymptotic relation
\begin{equation*}
\bar{\nu}_1^{(n)}(x) = \nu_\alpha^{(n)}(x) \stackrel{\infty}{\sim} (-1)^n\frac{C_{\phi,\alpha}}{\sqrt{2\pi}} x^{-n} \varphi_\alpha^n(x) \sqrt{\varphi_\alpha'(x)}e^{-\int_{\sf{k}^\alpha}^x \frac{\varphi_\alpha(y)}{y} dy}
\end{equation*}
where $C_{\phi,\alpha} > 0$ is a constant depending only on $\phi$ and $\alpha$, $\varphi_\alpha:[\sf{k}^\alpha,\infty) \to [0,\infty)$ is the continuous inverse of the function $u \mapsto \phi^\alpha(u)$ and $\sf{k} = \phi(0)$. The constant $C_{\phi,\alpha}$ may be identified as $C_\phi^\alpha$, where $C_\phi > 0$ is a constant depending only on $\phi$, cf.~\cite[Theorem 5.1(2)]{patie:2015}, and plainly $\varphi_\alpha(u) = \varphi(u^{\frac{1}{\alpha}})$, where $\varphi:[\sf{k},\infty)\to [0,\infty)$ is the continuous inverse of $\phi$. Thus, by some routine calculations, we conclude that
\begin{equation}
\label{eq:asymptotic-nu-alpha}
\nu_\alpha^{(n)}(x) \stackrel{\infty}{\sim} (-1)^n \frac{C_\phi^\alpha}{\sqrt{2\pi \alpha}} x^{-n-\frac{1}{2}} \varphi^n(x^\frac{1}{\alpha}) \sqrt{x^{\frac{1}{\alpha}}\varphi'(x^\frac{1}{\alpha})} e^{-\alpha\int_\sf{k}^{x^\frac{1}{\alpha}} \frac{\varphi(r)}{r}dr}.
\end{equation}
Since $\alpha \in (0,1)$ is arbitrary this proves the claimed asymptotic for any $n \geq 0$ and $t \in (0,1)$.

We proceed by showing that for $n = 0$, i.e.~for the density $\nu_t(x)$ itself, the claimed asymptotic holds for all $t > 0$, and then extend this to the case when $n \geq 1$. To this end we define, for $y \in \R$ and $t > 0$, $f_t(y) = e^y\nu_t(e^y)$ and set $f_0 = \delta_0$. Then by \Cref{lem:tail-asymptotics}\ref{item-1:lem:tail-asymptotics} $(f_t)_{t \geq 0}$ is an additive convolution semigroup of probability densities, and from \eqref{eq:asymptotic-nu-alpha} together with some simple algebra we get, for $\alpha \in (0,1)$,
\begin{equation}
\label{eq:asymptotic-f-alpha}
f_\alpha(y) \stackrel{\infty}{\sim} \frac{C_\phi^\alpha}{\sqrt{2\pi \alpha}} e^{\frac{y}{2}} \sqrt{e^{\frac{y}{\alpha}}\varphi'(e^{\frac{y}{\alpha}})} e^{-\alpha\int_\sf{k}^{e^\frac{y}{\alpha}} \frac{\varphi(r)}{r}dr}.
\end{equation}
Let us write
\begin{equation*}
\bar{\psi}(y) = \int_\sf{k}^{e^y} \frac{\varphi(r)}{r}dr = \Leg_G(y) + \int_\sf{k}^{\phi(1)} \frac{\varphi(r)}{r} dr
\end{equation*}
where $\Leg_G$ is the Legendre transform of the function $G$ is defined in \eqref{eq:def-G}. From \Cref{lem:self-neglecting} we get that $\bar{\psi} \in \ap$, and writing $\psi$ for the function
\begin{equation} \label{eq:def-psi}
\psi(y) = \alpha\int_\sf{k}^{e^\frac{y}{\alpha}} \frac{\varphi(r)}{r}dr = \alpha \bar{\psi}\left(\frac{y}{\alpha}\right),
\end{equation}
we get from \Cref{lem:flat-big-O}\ref{item-5:lem:flat-big-O} that $\psi \in \ap$. A straightforward calculation gives that its scale function $s_\psi$ takes the form
\begin{equation*}
s_{\psi}(y) = \sqrt{\frac{\alpha}{e^\frac{y}{\alpha} \varphi'(e^\frac{y}{\alpha})}},
\end{equation*}
so combining \Cref{item-1:lem:flat-big-O,item-2:lem:flat-big-O} of \Cref{lem:flat-big-O} we get that $\sqrt{e^\frac{y}{\alpha}\varphi'(e^\frac{y}{\alpha})}$ is flat with respect to $\psi$. Furthermore, as $\phi'$ is non-increasing positive,  $\lim_{u \to \infty} \phi'(u) < \infty$ and thus we have
\begin{equation*}
\lim_{y \to \infty} s_{\psi}(y) = \sqrt{\alpha} \lim_{y\to \infty} \frac{1}{\sqrt{e^\frac{y}{\alpha} \varphi'(e^\frac{y}{\alpha})}} = \sqrt{\alpha} \lim_{y \to \infty} e^{-\frac{y}{2\alpha}} \sqrt{\phi'(\varphi(e^\frac{y}{\alpha}))} = 0.
\end{equation*}
Hence
\begin{equation*}
\lim_{y \to \infty} \exp\left(\frac{ws_{\psi}(y)}{2}\right) = 1, \quad \text{locally uniformly in } w \in \R,
\end{equation*}
which shows that $e^{\frac{y}{2}}$ is flat with respect to $\psi$. Constants are trivially flat with respect to $\psi$, so that putting all of these observations together we get that all the terms in front of the exponential in \eqref{eq:asymptotic-f-alpha} are flat with respect to $\psi$. Hence, for each $\alpha \in (0,1)$, $f_\alpha$ has a Gaussian tail.

Now we may invoke the second part of \Cref{lem:tail-asymptotics}\ref{item-3:lem:tail-asymptotics}, which states that the property of having a Gaussian tail is stable under additive convolution, to obtain for any $d \in \N$
\begin{equation*}
f_{d\alpha}(y) \stackrel{\infty}{\sim} \frac{1}{\sqrt{d}} \left(\frac{2\pi}{\psi''\left(\frac{y}{d}\right)}\right)^{\frac{d-1}{2}} f_{\alpha}\left(\frac{y}{d}\right)^d = \frac{1}{\sqrt{d}} \left(\frac{2\pi \alpha}{e^\frac{y}{\alpha d}\varphi'(e^\frac{y}{\alpha d})}\right)^{\frac{d-1}{2}} f_{\alpha}\left(\frac{y}{d}\right)^d.
\end{equation*}
Since for any $t > 0$ we can find $\alpha \in (0,1)$ and $d \in \N$ such that $t = \alpha d$ we get from the above relation the asymptotic of $f_t$ for all $t > 0$. Hence, after performing some straightforward computations and changing variables again, we get that for any $t > 0$,
\begin{equation}
\label{eq:asymptotic-nu-t-proof}
\nu_t(x) \stackrel{\infty}{\sim} \frac{C_\phi^t}{\sqrt{2\pi t}} \sqrt{x^{\frac{1-t}{t}} \varphi'(x^\frac{1}{t})} e^{-t\int_\sf{k}^{x^\frac{1}{t}} \frac{\varphi(r)}{r}dr},
\end{equation}
which proves the claim for $n =0$.

Next, suppose that $n \geq 1$. A straightforward application of the chain rule gives that $f_\alpha^{(n)}(y) = (e^y\nu_\alpha(e^y))^{(n)}$ is a linear combination of terms of the form $e^{(k+1)y}\nu_\alpha^{(k)}(e^y)$, for $0 \leq k \leq n$. However, from \eqref{eq:asymptotic-nu-alpha} we deduce that, for large $y$, the term $e^{(n+1)y}\nu_\alpha^{(n)}(e^y)$ grows faster than all terms of lower order. Therefore,
\begin{equation}
\label{eq:asymptotic-f-n-alpha}
f_\alpha^{(n)}(y) \stackrel{\infty}{\sim} e^{(n+1)y} \nu_\alpha^{(n)}(e^y) \stackrel{\infty}{\sim}  (-1)^n \frac{C_\phi^\alpha}{\sqrt{2\pi \alpha}} e^{\frac{y}{2}} \varphi^n(e^{\frac{y}{\alpha}}) \sqrt{e^\frac{y}{\alpha} \varphi'(e^\frac{y}{\alpha}) } e^{-\alpha\int_\sf{k}^{e^\frac{y}{\alpha}} \frac{\varphi(r)}{r} dr}
\end{equation}
and the asymptotic on the right-hand side is obtained from the one in \eqref{eq:asymptotic-nu-alpha} after changing variables. From the right-hand side of \eqref{eq:asymptotic-f-n-alpha} it is apparent that the mapping  $y\mapsto (-1)^nf_\alpha^{(n)}(y)$ is eventually positive, so that there exists $a_n \in \R$ (depending on $n$) such that $\sf{f}_{\alpha,n}(y) = (-1)^nf_\alpha^{(n)}(y)\mathbb{I}_{\{y > a_n\}}$ is a positive function. Since $y \mapsto \varphi(e^\frac{y}{\alpha})$ is the derivative of $\psi$, which we recall from earlier denotes the function appearing within the exponential in \eqref{eq:asymptotic-f-n-alpha}, we have from \cite[Proposition 5.8]{balkema:1993} that $y \mapsto \varphi(e^\frac{y}{\alpha})$ is flat with respect to $\psi$, and combined with \Cref{lem:flat-big-O}\ref{item-3:lem:flat-big-O} this gives that $y \mapsto \varphi^n(e^\frac{y}{\alpha})$ is flat with respect to $\psi$. Thus, once again all terms in front of the exponential in \eqref{eq:asymptotic-f-n-alpha} are flat with respect to $\psi$. Let $\epsilon \in (0,\alpha)$ so that, from \Cref{lem:flat-big-O}\ref{item-4:lem:flat-big-O} applied to \eqref{eq:asymptotic-f-alpha}, we deduce the estimate
\begin{equation}
\label{eq:f_c-big-O}
\sf{f}_{\alpha,n}(y) \stackrel{\infty}{=} \bigO\left(e^{-(\alpha - \epsilon)\int_\sf{k}^{e^\frac{y}{\alpha}} \frac{\varphi(r)}{r} dr}\right).
\end{equation}
Then \eqref{eq:asymptotic-f-alpha} allows us to identify the right-hand side of \eqref{eq:f_c-big-O} as the dominant term in the asymptotic for the probability density $f_{\alpha - \epsilon}(y) = e^y\nu_{\alpha - \epsilon}(e^y)$, see \eqref{eq:asymptotic-f-alpha}. Indeed, the fact the function inside the big-$\bigO$ estimate of \eqref{eq:f_c-big-O} term dominates all others in \eqref{eq:asymptotic-f-alpha} is immediate, as the term in front of the exponential is increasing at infinity. Noting that dilating a function does not affect its integrability, we conclude that, for any $\alpha \in (0,1)$ and $n \geq 1$, the function $\sf{f}_{\alpha,n}$ is integrable. In particular, for each $\alpha \in (0,1)$ and $n \geq 1$ there exists a constant $c_{\alpha,n} > 0$ such that $c_{\alpha,n} \sf{f}_{\alpha,n}$ is a probability density.

Now, let us write $t = \alpha + \tau$, where $\alpha \in (0,1)$ and $\tau > 0$. If, for any $n \geq 0$, $f_\alpha^{(n)} \in \Leb^2(\R)$, and $f_\tau \in \Leb^2(\R)$, then a standard result (see~\cite[Chapter 8, Ex.~8 \& 9]{folland:1999}) allows us to interchange differentiation and convolution to write that
\begin{equation}
\label{eq:convolution-representation}
f_t^{(n)}(y) = (f_\alpha^{(n)} * f_\tau)(y), \: y \in \R.
\end{equation}
To this end, let $\bm{t} > 0$ and observe that
\begin{equation}
\label{eq:L-2-finiteness}
\int_{-\infty}^\infty \left(e^{(n+1)y}\nu_{\bm{t}}^{(n)}(e^y)\right)^2 dy = \int_0^\infty \left(x^{n+\frac{1}{2}} \nu_{\bm{t}}^{(n)}(x)\right)^2 dx = \frac{1}{2\pi} \int_{-\infty}^{\infty} \frac{|\Gamma(1+n+ib)|^2}{|\Gamma(1+ib)|^2} \left|W_\phi^{\bm{t}}\left(1+ib \right)\right|^2db
\end{equation}
where the first equality follows from a change of variables, and the second is a combination of the Parseval formula for the Mellin transform applied to the function $x\mapsto x^{n+\frac{1}{2}} \nu_{\bm{t}}^{(n)}(x)$ combined with \Cref{thm:Bernstein-Gamma-Mellin-transform}. By \cite[Theorem 4.2(3)(c)]{patie:2016}, the fact that $\phi(\infty) = \infty$ with $\phi^\alpha \in \B_\J$ implies that $b \mapsto |W_\phi^{\bm{t}}(1+ib)|$ decays faster than any polynomial along the real line. Next, we recall Stirling's formula for the gamma function, for any $a + ib$ with $a > 0$ fixed
\begin{equation}\label{eq:stirling}
|\Gamma(a+ib)| \stackrel{\infty}{\sim} C_a |b|^{a-\frac{1}{2}} e^{-\frac{\pi}{2}|b|}
\end{equation}
for some constant $C_a > 0$. Hence, the term in \eqref{eq:L-2-finiteness} involving the ratio of  gamma functions grows like $|b|^{2n+2}$, which by the aforementioned decay properties of $W_\phi^{\bm{t}}$ gives that the integral in \eqref{eq:L-2-finiteness} is finite. Since $f_\tau^{(n)} (y)= (e^y\nu_\alpha(e^y))^{(n)}$ is a linear combination of functions of the form $e^{(k+1)y}\nu_\alpha^{(k)}(e^y)$, for $k \leq n$, we get that $f_\alpha^{(n)} \in \Leb^2(\R)$ for any $n \geq 0$, and that $f_\tau \in \Leb^2(\R)$. Hence the equality in \eqref{eq:convolution-representation} is justified.

Next we aim to use a combination of \Cref{lem:tail-asymptotics}\ref{item-2:lem:tail-asymptotics} together with \eqref{eq:convolution-representation} in order to show that $f_t^{(n)}$ has a Gaussian tail. From \eqref{eq:asymptotic-f-n-alpha} we have
\begin{equation*}
(-1)^n f_\alpha^{(n)}(y) \stackrel{\infty}{\sim} h(y)e^{-\psi(y)}
\end{equation*}
where the function $\psi$ is defined in \eqref{eq:def-psi}, and $h$  denotes the function consisting of all terms in front of the exponential of \eqref{eq:asymptotic-f-n-alpha}. Since $h$ is flat with respect to $\psi$ we know, by \cite[Proposition 3.2]{balkema:1993}, that there exists $\chi \in \mathtt{C}^\infty(\R)$ such that $\chi (y) \stackrel{\infty}{\sim} h(y)$ and $s_{\psi}(y)\chi'(y) \stackrel{\infty}{=} \littleo(\chi(y))$. Further, from Proposition 5.8 in the aforementioned paper $\lim_{y \to \infty} s_{\psi}(y)\psi'(y) = \infty$. Using these facts we get
\begin{equation*}
\lim_{y \to \infty} \frac{(\log \chi(y))'}{\psi'(y)} = \lim_{y \to \infty} \frac{\chi'(y)}{\chi(y)\psi'(y)} = \lim_{y \to \infty} \frac{s_{\psi}(y)\chi'(y)}{\chi(y)} \frac{1}{s_{\psi}(y)\psi'(y)} = 0,
\end{equation*}
which is enough to show that $f_\alpha^{(n)}$ satisfies the assumptions of \Cref{lem:tail-asymptotics}\ref{item-2:lem:tail-asymptotics}. Since the arguments for $f_\tau$ are similar we have, invoking \Cref{lem:tail-asymptotics}\ref{item-2:lem:tail-asymptotics}, that
\begin{equation*}
(-1)^n c_{\alpha,n} f_t^{(n)}(y) \stackrel{\infty}{\sim} (c_{\alpha,n} \sf{f}_{\alpha,n} * f_\tau)(y),
\end{equation*}
with both $c_{\alpha,n}\sf{f}_{\alpha,n}$ and $f_\tau$ having Gaussian tails. Applying \Cref{lem:tail-asymptotics}\ref{item-3:lem:tail-asymptotics} again we conclude that  $c_{\alpha,n}\sf{f}_{\alpha,n} * f_\tau$ has a Gaussian tail, and hence $f_t^{(n)}(y) \stackrel{\infty}{\sim} (-1)^n \eta_0(y) e^{-\psi_0(y)}$, where $\psi_0 \in \ap$ and $\eta_0$ is flat with respect to $\psi_0$.

To conclude the proof it remains to identify $\eta_0$ and $\psi_0$, which may be computed as described in \Cref{lem:tail-asymptotics}\ref{item-3:lem:tail-asymptotics}, using a combination of \eqref{eq:asymptotic-f-n-alpha} and, after changing variables, \eqref{eq:asymptotic-nu-t-proof}. As in the lemma, we write $y(u) = q_1(u) + q_2(u) = \alpha \log \phi(u) + \tau\log \phi(u) = t\log \phi(u)$, where the second equality serves as definition of $q_1$ and $q_2$, and the last equality defines the conjugate variables $y$ and $u$. Using this notation it is straightforward to conclude that
\begin{equation*}
\psi_0(y) = \alpha \int_{\sf{k}}^{e^\frac{q_1}{\alpha}} \frac{\varphi(r)}{r}dr + \tau\int_{\sf{k}}^{e^{\frac{q_2}{\tau}}} \frac{\varphi(r)}{r}dr = (\alpha + \tau) \int_{\sf{k}}^{\phi(u)} \frac{\varphi(r)}{r}dr  = t\int_{\sf{k}}^{e^{\frac{y}{t}}} \frac{\varphi(r)}{r}dr.
\end{equation*}
The associated scale function $s_{\psi_0}$ is then
\begin{equation*}
s_{\psi_0}(y) = \sqrt{\frac{t}{e^\frac{y}{t}\varphi'(e^\frac{y}{t})}}.
\end{equation*}
Let $\eta_1$ and $\eta_2$ denote the flat terms, while $\psi_1$ and $\psi_2$ denote the asymptotically parabolic terms, in the Gaussian tails of $c_{\alpha,n}\sf{f}_{\alpha,n}$ and $f_\tau$ respectively. Then,
\begin{equation*}
\eta_1(q_1(u)) = \frac{C_\phi^\alpha}{\sqrt{2\pi \alpha}} (\phi(u))^\frac{\alpha}{2} u^n \sqrt{\phi(u)\varphi'(\phi(u))},
\end{equation*}
and
\begin{equation*}
\eta_2(q_2(u)) = \frac{C_\phi^\tau}{\sqrt{2\pi \tau}} (\phi(u))^\frac{\tau}{2} \sqrt{\phi(u)\varphi'(\phi(u))}.
\end{equation*}
Furthermore,
\begin{equation*}
s_{\psi_1}(q_1(u)) = \sqrt{\frac{\alpha}{\phi(u)\varphi'(\phi(u))}} \quad \text{and} \quad s_{\psi_2}(q_2(u)) = \sqrt{\frac{\tau}{\phi(u)\varphi'(\phi(u))}}
\end{equation*}
where $s_{\psi_1}$ and $s_{\psi_2}$ are the scale functions of $\psi_1$ and $\psi_2$, respectively. Putting all of these observations together we get that $\eta_0$ can be written, after canceling like terms, as
\begin{equation*}
\eta_0(y) = \frac{C_\phi^{(\alpha+\tau)}\sqrt{2\pi}}{\sqrt{2\pi\alpha}\sqrt{2\pi\tau}} e^\frac{y}{2} \sqrt{\alpha}\sqrt{\tau} \frac{\sqrt{e^\frac{y}{t}\varphi'(e^\frac{y}{t})}}{\sqrt{t}} \varphi^n(e^\frac{y}{t}) = \frac{C_\phi^t}{\sqrt{2\pi t}} e^\frac{y}{2}\varphi^n(e^\frac{y}{t})\sqrt{e^\frac{y}{t}\varphi'(e^\frac{y}{t})}.
\end{equation*}
This gives us $f_t^{(n)}(y) \stackrel{\infty}{\sim} (-1)^n\eta_0(y)e^{-\psi_0(y)} \stackrel{\infty}{\sim} e^{(n+1)y}\nu_t^{(n)}(e^y)$, and changing variables again, we finally obtain the claimed asymptotic
\begin{equation*}
\nu_t^{(n)}(x) \stackrel{\infty}{\sim} (-1)^n \frac{C_\phi^t}{\sqrt{2\pi t}} x^{-n} \varphi^n(x^\frac{1}{t}) \sqrt{x^{\frac{1-t}{t}}\varphi'(x^\frac{1}{t})} e^{-t\int_\sf{k}^{x^\frac{1}{t}} \frac{\varphi(r)}{r}dr},
\end{equation*}
for any $n \geq 0$ and $t > 0$, which completes the proof.

\subsubsection{Proof of \Cref{thm:asymptotics-infinity}\ref{item-1:thm:asymptotics-infinity} and \Cref{thm:asymptotics-infinity}\ref{item-2:thm:asymptotics-infinity}} \label{subsec:asymptotics-infinity-proofs-2}
 The proof is the same for \cite[Theorem 5.5(1)]{patie:2015} and \cite[Theorem 5.5(2)]{patie:2015}, but we give the arguments for sake of completeness.  Suppose that $\sf{d} > 0$, so that
\begin{equation*}
\phi(u) = \sf{k} + \sf{d}u + u\int_0^\infty e^{-uy} \bar{\mu}(y)dy.
\end{equation*}
Then, invoking \cite[Proposition 4.1(3)]{patie:2015} we have $\phi(u) \stackrel{\infty}{\sim} \sf{d}u$ and hence $\varphi(u) \stackrel{\infty}{\sim} \sf{d}^{-1} u$. Furthermore, differentiating the identity $u = \phi(\varphi(u))$ gives $\varphi'(u) = \frac{1}{\phi'(\varphi(u))}$ and since, by the monotone density theorem, see~\cite[Theorem 1.7.2]{bingham:1989}, $\phi'(u) \stackrel{\infty}{\sim} \sf{d}$, we get that $\varphi'(u) \stackrel{\infty}{\sim} \sf{d}^{-1}$. Next, as $u = \phi(\varphi(u))$ we have, on $[\sf{k},\infty)$,
\begin{equation*}
u = \sf{k} + \sf{d}\varphi(u) + \varphi(u)\int_0^\infty e^{-\varphi(u)y} \bar{\mu}(y)dy = \sf{k} + \sf{d}\varphi(u) + E(u)
\end{equation*}
where the last equality serves to define the function $E$. By dominated convergence we have that \\ $\lim_{u \to \infty} \int_0^\infty e^{-\varphi(u)y} \bar{\mu}(y)dy = 0$ which, together with $\varphi(u) \stackrel{\infty}{\sim} \sf{d}^{-1} u$, shows that $E(u) = \littleo(u)$. Re-arranging, we obtain $\varphi(y) =\sf{d}^{-1}(u - \sf{k} - E(u))$, so that substituting all of these quantities into the identities \eqref{eq:asymptotic-nu-t} and \eqref{eq:asymptotic-nu-t-n} proves \Cref{item-1:thm:asymptotics-infinity}.

Next, assume that $\phi(u) \stackrel{\infty}{\sim} C_\alpha u^\alpha$, with $C_\alpha > 0$ a constant and $\alpha \in (0,1)$. A standard result from regular variation theory gives that $\varphi(u)  \stackrel{\infty}{\sim} C_\alpha^{-\frac{1}{\alpha}} u^\frac{1}{\alpha}$, see e.g.~\cite[Theorem 1.5.12]{bingham:1989}. This allows us to define $H(u) = C_\alpha^{-\frac{1}{\alpha}} u^\frac{1}{\alpha} - \varphi(u)$, so that $H(u) = \littleo(u^\frac{1}{\alpha})$. Next, the monotonicity of $\phi'$ allows us to again invoke the monotone density theorem to conclude that $\phi'(u) \stackrel{\infty}{\sim} C_\alpha\alpha u^{\alpha-1}$, see again~\cite[Theorem 1.7.2]{bingham:1989}. Combining these two statements with the identity
$\varphi'(u) = \frac{1}{\phi'(\varphi(u))}$ yields the asymptotic $\varphi'(u)  \stackrel{\infty}{\sim} \alpha^{-1}C_\alpha^{-\frac{1}{\alpha}} u^{\frac{1}{\alpha}-1}$. Finally, substituting these asymptotics proves the claim.
\subsection{Proofs for \Cref{subsec:threshold}} \label{subsec:threshold-result-proofs}

Before beginning with the proofs we state some preliminary results that will be used in the proof of \Cref{thm:threshold-result}\ref{item-2:thm:threshold-result} and \Cref{thm:threshold-result}\ref{item-3:thm:threshold-result}.

\begin{proposition}
\label{prop:reference-function}
For $\alpha \in (0,1)$ and $\mathfrak{m} \geq 0$, let $\phi_{\alpha,\mathfrak{m}}:[0,\infty) \to [0,\infty)$ be defined by $\phi_{\alpha,\mathfrak{m}}(u) = (u+\mathfrak{m})^\alpha$.
\begin{enumerate}
\item \label{item-1:prop:reference-function} For any $\alpha \in (0,1)$ and $\mathfrak{m} \geq 0$, $\phi_{\alpha,\mathfrak{m}}$ is a complete Bernstein function.
\item The potential measure of $\phi_{\alpha,\mathfrak{m}}$ admits a density, denoted by $U_{\alpha,\mathfrak{m}}$, given by
\begin{equation*}
U_{\alpha,\mathfrak{m}}(y) = \frac{1}{\Gamma(\alpha)} e^{-\mathfrak{m} y} y^{\alpha - 1}.
\end{equation*}
Furthermore, $U_{\alpha,\mathfrak{m}}$ is non-increasing, convex and solves, on $\R_+$, the differential equation
\begin{equation*}
U_{\alpha,\mathfrak{m}}' = -U_{\alpha,\mathfrak{m}}(y)\left(\mathfrak{m} + \frac{1-\alpha}{y}\right).
\end{equation*}
\item \label{item-3:prop:reference-function} Let $\phi \in \B_{d}$, i.e.~$\sf{d} > 0$. Then, for any $\alpha \in (0,1)$,
\begin{equation*}
y_\alpha = \inf\{y \geq 0; y\bar{\mu}(y) > \sf{d}(1-\alpha) \} \in (0,\infty],
\end{equation*}
and, for any $\mathfrak{m}$ such that $\sf{d}\mathfrak{m} \geq \bar{\mu}(\frac{y_\alpha}{2})+\sf{k}$, we have that $\frac{\phi}{\phi_{\alpha,\mathfrak{m}}} \in \B$.
\end{enumerate}
\end{proposition}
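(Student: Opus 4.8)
For part (1) I would use the upper half-plane criterion recalled in \Cref{rem:cb}: if $z\in\mathbb{H}$ then $z+\mathfrak{m}\in\mathbb{H}$, so the principal branch gives $\arg(z+\mathfrak{m})\in(0,\pi)$ and hence $\arg\big((z+\mathfrak{m})^\alpha\big)=\alpha\arg(z+\mathfrak{m})\in(0,\pi)$, so $\Im\phi_{\alpha,\mathfrak{m}}(z)>0$; together with $\phi_{\alpha,\mathfrak{m}}([0,\infty))\subseteq[0,\infty)$ this shows $\phi_{\alpha,\mathfrak{m}}$ is a complete Bernstein function (equivalently, one quotes that $u\mapsto u^\alpha$ is complete Bernstein and that translating the argument preserves this). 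For part (2), the density is read off from $\int_0^\infty e^{-ur}r^{\alpha-1}\,dr=\Gamma(\alpha)u^{-\alpha}$ after the shift $u\mapsto u+\mathfrak{m}$, which by \eqref{eq:def-pm} identifies the potential density as $U_{\alpha,\mathfrak{m}}(r)=\Gamma(\alpha)^{-1}e^{-\mathfrak{m}r}r^{\alpha-1}$; monotonicity is immediate since $r\mapsto e^{-\mathfrak{m}r}$ and (as $\alpha<1$) $r\mapsto r^{\alpha-1}$ are non-increasing, the differential equation follows by differentiating $\log U_{\alpha,\mathfrak{m}}$, and convexity follows by differentiating the equation once more: $U_{\alpha,\mathfrak{m}}''=U_{\alpha,\mathfrak{m}}\big[(\mathfrak{m}+\tfrac{1-\alpha}{r})^2+\tfrac{1-\alpha}{r^2}\big]\ge 0$.

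\textbf{Part (3): the threshold $y_\alpha$.} First I would record that $\lim_{y\to 0^+}y\bar{\mu}(y)=0$, where $\bar{\mu}(y)=\mu((y,\infty))$ is the tail of the Lévy measure: for $y<\varepsilon$ one has $y\mu((y,\varepsilon])\le\int_{(y,\varepsilon]}v\,\mu(dv)\le\int_{(0,\varepsilon]}v\,\mu(dv)$ and $y\mu((\varepsilon,\infty))\to 0$, so $\limsup_{y\to0^+}y\bar{\mu}(y)\le\int_{(0,\varepsilon]}v\,\mu(dv)$, which tends to $0$ as $\varepsilon\to0$ because $\int_0^1(1\wedge v)\mu(dv)<\infty$. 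Since $\sf{d}(1-\alpha)>0$, the set $\{y:y\bar{\mu}(y)>\sf{d}(1-\alpha)\}$ is then disjoint from a neighbourhood of $0$, so $y_\alpha>0$, and $y_\alpha=\infty$ when that set is empty; hence $y_\alpha\in(0,\infty]$.

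\textbf{Part (3): $\phi/\phi_{\alpha,\mathfrak{m}}\in\B$.} Write $h:=\phi/\phi_{\alpha,\mathfrak{m}}$, $U:=U_{\alpha,\mathfrak{m}}$ and $\widehat{U}(u):=\int_0^\infty e^{-ur}U(r)\,dr=(u+\mathfrak{m})^{-\alpha}$, so $h=\phi\,\widehat{U}$ is nonnegative and finite on $(0,\infty)$; the plan is to show $h'$ is completely monotone, which gives $h\in\B$. Since $h'=\phi'\widehat{U}+\phi\,\widehat{U}'$, I would expand $\phi'(u)=\sf{d}+\int_0^\infty ve^{-uv}\mu(dv)$, $\widehat{U}'(u)=-\int_0^\infty re^{-ur}U(r)\,dr$ and $\phi(u)=\sf{k}+\sf{d}u+\int_0^\infty(1-e^{-uv})\mu(dv)$, and then — using that a product of Laplace transforms is the transform of the convolution, the shift property, the identity $u\int_0^\infty e^{-ur}g(r)\,dr=\int_0^\infty e^{-ur}g'(r)\,dr$ when $g(0^+)=0$ (applied to $g(r)=rU(r)$), and Fubini — obtain, after cancellation of the two $\sf{d}U(r)$ terms, the representation $h'(u)=\int_0^\infty e^{-ur}\sigma(r)\,dr$ with
\[
\sigma(r)=r\Big(-\sf{d}\,U'(r)-U(r)\big(\sf{k}+\bar{\mu}(r)\big)+\int_0^r\big(U(r-v)-U(r)\big)\mu(dv)\Big).
\]
Thus $h'$ is completely monotone if and only if $\sigma\ge 0$ a.e. By the monotonicity of $U$ from part (2) the last integral is nonnegative, so it is enough to have $-\sf{d}U'(r)\ge U(r)(\sf{k}+\bar{\mu}(r))$; dividing by $U(r)>0$ and using the differential equation $-U'(r)/U(r)=\mathfrak{m}+(1-\alpha)/r$ from part (2) turns this into $\sf{d}\mathfrak{m}-\sf{k}\ge\bar{\mu}(r)-\sf{d}(1-\alpha)/r$ for all $r>0$. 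For $r<y_\alpha$ the definition of $y_\alpha$ gives $r\bar{\mu}(r)\le\sf{d}(1-\alpha)$, so the right-hand side is $\le 0$; for $r\ge y_\alpha/2$ monotonicity of $\bar{\mu}$ gives $\bar{\mu}(r)-\sf{d}(1-\alpha)/r\le\bar{\mu}(y_\alpha/2)$ (and when $y_\alpha=\infty$ only the first case occurs and $\bar{\mu}(y_\alpha/2)=0$). Hence the hypothesis $\sf{d}\mathfrak{m}\ge\bar{\mu}(y_\alpha/2)+\sf{k}$ yields $\sigma\ge 0$, finishing the proof.

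The main obstacle is the identification of $\sigma$: the term $\phi\,\widehat{U}'$ is a product of a Bernstein function with a negative completely monotone function and is \emph{not} completely monotone on its own, so one must track the cancellations carefully (in particular, the drift terms of $\phi$ must recombine, via the differential equation of part (2), exactly into the factor $-\sf{d}U'$), and must also check the minor convergence points (integrability near $v=0$ where $U(r-v)-U(r)=O(v)$, and that $\sigma\in\Leb^1_{\mathrm{loc}}$ despite the atoms of $\mu$). Once $\sigma$ is in hand, the sign analysis is short and the role of the quantity $y_\alpha$ and of the hypothesis on $\mathfrak{m}$ becomes transparent.
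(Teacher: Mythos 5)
Your proposal is correct: parts (1)--(2) coincide with the paper's short argument (the half-plane criterion of \Cref{rem:cb} and the Gamma-integral identification of $U_{\alpha,\mathfrak{m}}$, with the ODE and convexity by direct differentiation), and in part (3) your formula for $\sigma$, obtained from the cancellation of the drift terms via $u\widehat{g}(u)=\widehat{g'}(u)$ with $g(r)=rU_{\alpha,\mathfrak{m}}(r)$, together with the sign analysis using the differential equation for $U_{\alpha,\mathfrak{m}}$, the definition of $y_\alpha$ and the hypothesis $\sf{d}\mathfrak{m}\geq\bar{\mu}(\tfrac{y_\alpha}{2})+\sf{k}$, all check out. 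This is essentially the same route as the paper's, which omits the details of (3) and defers them ``mutatis mutandis'' to \cite[Proposition 4.4(2)]{patie:2015}, where the argument is precisely this kind of Laplace-transform representation of $\phi\,\widehat{U}_{\alpha,\mathfrak{m}}$ with a positivity check of the resulting density; your write-up simply supplies in full the computation the paper outsources.
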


\begin{proof}
Let $\alpha \in (0,1)$ and $\mathfrak{m} \geq 0$. The fact that $\phi_{\alpha,\mathfrak{m}}$ is a complete Bernstein function is straightforward and was also mentioned in \Cref{rem:cb}. To show that $U_{\alpha,\mathfrak{m}}$ defined as above is the density of the potential measure of $\phi_{\alpha,\mathfrak{m}}$ we observe that
\begin{equation*}
\frac{1}{u^\alpha} = \frac{1}{\Gamma(\alpha)} \int_0^\infty e^{-uy} y^{\alpha-1}dy,
\end{equation*}
and then substitute $u+\mathfrak{m}$ for $u$. The claimed properties of $U_{\alpha,\mathfrak{m}}$ can then be verified by straightforward calculations. The proof of the last claim is, mutatis mutandis, the same as the one given for \cite[Proposition 4.4(2)]{patie:2015}, so we omit it here. The interested reader will find the details outlined in the PhD thesis of the second author, which is forthcoming.  Note that the proof of \cite[Proposition 4.4]{patie:2015} does not explicitly use the fact that the L\'evy measure of $\phi$ has a non-increasing density, and hence this restriction can be removed. Furthermore, we have modified $y_\alpha$ and the condition on $\mathfrak{m}$ to suit our potential measure $U_{\alpha,\mathfrak{m}}$.
\end{proof}

We write, for two functions $f$ and $g$, $f(x) \stackrel{\infty}{\asymp} g(x)$ if $f(x) \stackrel{\infty}{=} \bigO(g(x))$ and $g(x) \stackrel{\infty}{=} \bigO(f(x))$. In the following theorem we rephrase, in the context of Berg-Urbanik semigroups, an Abelian type criterion for moment indeterminacy that was given in \cite{patie:2018a}, which we use in the proof of \Cref{thm:threshold-result}\ref{item-3:thm:threshold-result}.

\begin{theorem}[Theorem 1.2(2) in \cite{patie:2018a}]
\label{thm:Abelian-theorem}
Let $(\nu_t)_{t \geq 0}$ be a Berg-Urbanik semigroup and suppose that, for some $t > 0$, $\nu_t(dx) = \nu_t(x)dx$, $x > 0$, and
\begin{equation*}
\nu_t(x) \stackrel{\infty}{\asymp} e^{-G(\log x)},
\end{equation*}
with $G \in \ap$ satisfying $\lim_{y \to \infty} G'(y) e^{-\frac{y}{2}} < \infty$. Then, writing $\gamma$ for the inverse of the continuous, increasing function $G'$,
\begin{equation*}
\sum_{n=n_0}^\infty e^{-\frac{\gamma(n)}{2}} < \infty, \enskip \text{ for some } \enskip n_0 \geq 1 \quad \iff \quad \nu_t \text{ is moment indeterminate.}
\end{equation*}
\end{theorem}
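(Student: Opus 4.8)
The plan is to obtain \Cref{thm:Abelian-theorem} as the specialization of the general Abelian criterion \cite[Theorem 1.2(2)]{patie:2018a} to the measure $\nu_t$, so that the proof reduces to verifying that the standing hypotheses of that criterion hold here and that its conclusion translates into the stated summability dichotomy. Recall that \cite[Theorem 1.2(2)]{patie:2018a} concerns a probability measure on $(0,\infty)$ which has finite moments of all orders and whose density admits a two-sided tail estimate of the form $e^{-G(\log x)}$, with $G$ asymptotically parabolic and subject to the growth bound on $G'$; under these assumptions it characterises moment indeterminacy by the convergence of $\sum_{n\geq n_0}e^{-\frac{\gamma(n)}{2}}$, where $\gamma$ is the inverse of $G'$. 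The content of \Cref{thm:Abelian-theorem} is precisely this statement with the underlying measure taken to be $\nu_t$.

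First I would record that every Berg-Urbanik semigroup measure possesses moments of all orders: by \eqref{eq:moments-nu-t} one has $\M_{\nu_t}(n)=\left(\prod_{k=1}^n\phi(k)\right)^t<\infty$ for each $n\in\N$, since $\phi$ takes finite values on $[0,\infty)$ by \eqref{eq:Bernstein-function}. Hence the Stieltjes moment problem for $\nu_t$ is well posed and the dichotomy determinate/indeterminate is meaningful. The remaining hypotheses required by \cite[Theorem 1.2(2)]{patie:2018a} --- that $\nu_t$ has a density, that $\nu_t(x)\stackrel{\infty}{\asymp}e^{-G(\log x)}$ for some $G\in\ap$, and that $\lim_{y\to\infty}G'(y)e^{-\frac{y}{2}}<\infty$ --- are assumed outright in \Cref{thm:Abelian-theorem}, and nothing beyond them is needed: the criterion is insensitive both to two-sided multiplicative constants and to the flat perturbations permitted within $\ap$, which is exactly why the hypothesis can be phrased with $\stackrel{\infty}{\asymp}$ in place of $\stackrel{\infty}{\sim}$. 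Moreover, since $G\in\ap$ forces $G''>0$ near infinity, the function $G'$ is there continuous and strictly increasing, so its inverse $\gamma$ is well defined in a neighbourhood of $+\infty$, as used in the statement.

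With the hypotheses in place, the equivalence $\sum_{n\geq n_0}e^{-\frac{\gamma(n)}{2}}<\infty \Longleftrightarrow \nu_t\text{ is moment indeterminate}$ is read off directly from \cite[Theorem 1.2(2)]{patie:2018a}. The point requiring the most care, and the one I expect to be the main obstacle, is confirming that the present formulation is a faithful transcription of the one in \cite{patie:2018a}: one must check that the change of variables $x=e^y$ passing between the multiplicative picture of $\nu_t$ on $(0,\infty)$ and the additive picture used in \cite{patie:2018a} (cf.\ \Cref{lem:tail-asymptotics}\ref{item-1:lem:tail-asymptotics}) carries $G$, $\gamma$ and the summability condition over unchanged, that no regularity of the density beyond $\stackrel{\infty}{\asymp}e^{-G(\log x)}$ with $G\in\ap$ is tacitly required in \cite{patie:2018a}, and that the discrete series $\sum_n e^{-\frac{\gamma(n)}{2}}$ coincides with whatever (possibly integral) form the criterion takes there --- the last of these being an elementary integral-test argument using the monotonicity of $\gamma$. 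Once this dictionary is fixed, \Cref{thm:Abelian-theorem} follows at once.
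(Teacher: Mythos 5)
Your proposal matches the paper exactly: the paper gives no independent proof of this statement, but simply restates \cite[Theorem 1.2(2)]{patie:2018a} in the Berg-Urbanik setting, relying on the external criterion precisely as you do (with the hypotheses on the density, the tail estimate, $G \in \ap$ and the growth bound on $G'$ assumed outright, and the finiteness of all moments being automatic from \eqref{eq:moments-nu-t}). Your additional remarks about checking that the transcription of the external statement is faithful are reasonable diligence but do not change the argument, which is the same citation-based route the paper takes.
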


\subsubsection{Proof of \Cref{thm:threshold-result}\ref{item-1:thm:threshold-result}} \label{subsubsec:item-1}

First, invoking \cite[Theorem 5.1(2)]{patie:2015} we get
\begin{equation*}
\M_\nu(n) = W_\phi(n+1) \stackrel{\infty}{\sim} C_\phi \sqrt{\phi(n)}e^{G(n)}
\end{equation*}
where $G(n) = \int_1^n \log \phi(r) dr$ and $C_\phi > 0$ is a constant depending only on $\phi$. Integrating $G$ by parts, for any $t > 0$ and $n \geq 1$, gives us
\begin{equation*}
\frac{t}{2n}G(n) = \frac{t}{2}\log \phi(n) - \frac{t}{2n}\left(\log \phi(1) + \int_1^n u\frac{\phi'(u)}{\phi(u)}du \right).
\end{equation*}
Consequently, for some $C_1 > 0$ a constant, we have
\begin{equation}
\label{eq:exponential-bound}
\sum_{n=1}^\infty W_\phi^{-\frac{t}{2n}}(n+1) \geq C_1 \sum_{n=1}^\infty \exp\left[-\frac{t}{2}\left(\log \phi(n)+ \frac{1}{2n}\log \phi(n)\right)\right] \exp\left[\frac{t}{2n}\left(\log \phi(1) + \int_1^n u\frac{\phi'(u)}{\phi(u)}du \right)\right].
\end{equation}
The estimate $\phi(n) \stackrel{\infty}{=} \bigO(n)$, see e.g.~\cite[Proposition 4.1(3)]{patie:2015}, gives $\log \phi(n) \stackrel{\infty}{=} \littleo(n)$, which together with the positivity of the terms within the second exponential in \eqref{eq:exponential-bound} allows us to obtain, for $C_2 > 0$ a constant, the bound
\begin{equation*}
\sum_{n=1}^\infty W_\phi^{-\frac{t}{2n}}(n+1) \geq C_1e^{-C_2t} \sum_{n=1}^\infty \phi^{-\frac{t}{2}}(n),
\end{equation*}
so to prove moment determinacy it suffices to show the divergence of this latter series. Let $\beta > \beta_\phi$. By definition of $\beta_\phi$, $\phi(u) \stackrel{\infty}{=} \bigO(u^\beta)$, so that for some constant $C_3 > 0$
\begin{equation*}
\sum_{n=1}^\infty \phi^{-\frac{t}{2}}(n) \geq C_3 \sum_{n=1}^\infty n^{-\frac{t\beta}{2}}.
\end{equation*}
The latter series diverges if and only if $t \beta \leq 2$, whence the moment determinacy of $\nu_t$ for any $t \leq \frac{2}{\beta} < \frac{2}{\beta_\phi}$. Since $\beta > \beta_\phi$ is arbitrary we conclude that $\scr{T}_\phi \geq \frac{2}{\beta_\phi}$ if $\beta_\phi > 0$ and $\scr{T}_\phi = \infty$ for $\beta_\phi = 0$. Finally, if $\limsup_{u \to \infty} u^{-\beta_\phi}\phi(u) < \infty$ then we may choose $\beta = \beta_\phi$ and apply the above argument to conclude that $\nu_{\scr{T}_\phi}$ is moment determinate.

\subsubsection{Proof of \Cref{thm:threshold-result}\ref{item-3:thm:threshold-result}}

It suffices to treat the case when $\delta_\phi \in (0,1]$, since otherwise the claimed right-hand inequality in \eqref{eq:inequality-T} is trivial. Therefore we assume also that $0 < \delta_\phi \leq \beta_\phi \leq 1$, and $\delta_\phi > 0$ is easily seen to imply that $\phi(\infty)=\infty$. Invoking \Cref{thm:asymptotics-infinity} we get that, for any $t > 0$,
\begin{equation*}
\nu_t(x) \stackrel{\infty}{\sim} \frac{C_\phi^t}{\sqrt{2\pi t}} x^{\frac{1-t}{2t}} \sqrt{\varphi'(x^\frac{1}{t})} e^{-t\int_\sf{k}^{x^\frac{1}{t}} \frac{\varphi(r)}{r}dr}.
\end{equation*}
Let $b(\log x)$ denote all the terms in front of the exponential and set $\bar{G}(\log x)$ for the function within the exponential on the right-hand of the above asymptotic relation. It was shown in the proof of \Cref{thm:asymptotics-infinity} that $b$ is flat with respect to $\bar{G}$, and thus, by \Cref{lem:flat-big-O}\ref{item-4:lem:flat-big-O} we have that $b(\log x) \stackrel{\infty}{=} \littleo(\bar{G}(\log x))$. Hence, for any $c \in (0,t)$ fixed we get that
\begin{equation*}
\nu_t(x) \stackrel{\infty}{\asymp} e^{-G(\log x)}
\end{equation*}
where
\begin{equation*}
G(\log x) = (t-c) \int_\sf{k}^{x^\frac{1}{t}} \frac{\varphi(r)}{r}dr.
\end{equation*}
From \Cref{lem:self-neglecting} it follows that $G \in \ap$ and a simple calculation, after substituting $y = \log x$, gives that
\begin{equation*}
G'(y) = \frac{(t-c)}{t} \varphi(e^\frac{y}{t}) = \mathfrak{t}\varphi(e^\frac{y}{t})
\end{equation*}
where we write $\bm{t} = \frac{(t-c)}{t} \in (0,1)$ for ease of notation. Observe that, for any $\delta \in (0,\delta_\phi)$, the property $\liminf_{u \to \infty} u^{-\delta}\phi(u) > 0$ is equivalent to $\limsup_{u \to \infty} u^{-\frac{1}{\delta}} \varphi(u) < \infty$. Hence, for any $\delta > \delta_\phi$ and $t \geq \frac{2}{\delta}$ we have
\begin{equation*}
\lim_{y \to \infty} G'(y) e^{-\frac{y}{2}} = \mathfrak{t} \lim_{y \to \infty} \varphi(e^\frac{y}{t}) e^{-\frac{y}{2}} = \bm{t} \lim_{y \to \infty} e^{-\frac{y}{\delta t}}\varphi(e^\frac{y}{t}) e^{\left(\frac{1}{\delta t}-\frac{1}{2}\right)y} < \infty,
\end{equation*}
and thus all the assumptions of \Cref{thm:Abelian-theorem} are fulfilled for any $t > \frac{2}{\delta_\phi}$. The inverse of $G'$ is easily identified as $\gamma(u) = t\log\phi(\bm{t}u)$ so that,
\begin{equation}
\label{eq:series}
\sum_{n=1}^\infty e^{-\frac{\gamma(n)}{2}} = \sum_{n=1}^\infty \phi^{-\frac{t}{2}}(\bm{t}n).
\end{equation}
Now, for any $\delta \in (0, \delta_\phi)$, there exists a constant $C > 0$ (depending only on $t$) such that, for $n$ large enough,
\begin{equation*}
\phi^{-\frac{t}{2}}(\bm{t}n) \leq C n^{-\frac{\delta t}{2}}.
\end{equation*}
Thus for any $t > \frac{2}{\delta}$ the series in \eqref{eq:series} converges, so that $\nu_t$ is indeterminate. Since $\delta$ can be taken arbitrarily close to $\delta_\phi$ this gives the indeterminacy of $\nu_t$ for any $t > \frac{2}{\delta_\phi}$.

\subsubsection{Proof of \Cref{thm:threshold-result}\ref{item-4:thm:threshold-result}}

Let $\frac{\phi}{\vartheta} \in \B$ and write $(\rho_t)_{t  > 0}$ for the Berg-Urbanik semigroup associated to $\vartheta$. Since $\frac{\phi}{\vartheta} \in \B$ we may invoke \cite[Theorem 4.7(3)]{patie:2016} to get that, for any $t > 0$ and $n \geq 0$,
\begin{equation*}
W_\phi^t(n+1) = W_{\frac{\phi}{\vartheta}}^t(n+1) W_\vartheta^t(n+1)
\end{equation*}
where each of the terms is a moment sequence. Applying \cite[Lemma 2.2 and Remark 2.3]{berg:2004} we conclude that whenever $\rho_t$ is indeterminate then $\nu_t$ is indeterminate, i.e.
\begin{equation*}
\{t>0;\rho_t \text{ is indeterminate}\} \subseteq \{t>0;\nu_t \text{ is indeterminate}\},
\end{equation*}
which implies that $\scr{T}_\phi \leq \scr{T}_\vartheta$. If $\vartheta^\mathfrak{t} \in \B_\J$ for all $\mathfrak{t} \in (0,1)$, then invoking \Cref{thm:threshold-result}\ref{item-3:thm:threshold-result} yields $\scr{T}_\phi \leq \frac{2}{\delta_\vartheta}$, which completes the proof.

\subsubsection{Proof of \Cref{thm:threshold-result}\ref{item-2:thm:threshold-result}}

First, by \Cref{prop:reference-function} and using the notation therein, we have for any $\alpha \in (0,1)$ and $\mathfrak{m} \geq \frac{\bar{\mu}(\frac{y_\alpha}{2})+\sf{k}}{\sf{d}}$ that $\frac{\phi}{\phi_{\alpha,\mathfrak{m}}} \in \B$. Hence, by \Cref{thm:threshold-result}\ref{item-4:thm:threshold-result} it follows that $\scr{T}_\phi \leq \scr{T}_{\phi_{\alpha,\mathfrak{m}}}$. \Cref{prop:reference-function}\ref{item-1:prop:reference-function} gives that $\phi_{\alpha,\mathfrak{m}}$ is a complete Bernstein function so that $\phi_{\alpha,\mathfrak{m}}^\mathfrak{t} \in \B_\J$ for all $\mathfrak{t} \in (0,1)$, see e.g.~\Cref{rem:cb}. Plainly $\phi_{\alpha,\mathfrak{m}} \stackrel{\infty}{\sim} u^\alpha$, which implies that $\delta_{\phi_{\alpha,\mathfrak{m}}} = \beta_{\phi_{\alpha,\mathfrak{m}}} = \alpha$. Invoking \Cref{thm:threshold-result}\ref{item-3:thm:threshold-result} we get that $\scr{T}_{\phi} \leq \frac{2}{\alpha}$. Since this inequality holds for any $\alpha \in (0,1)$ we get $\scr{T}_{\phi} \leq 2$, whence $\scr{T}_{\phi} = 2$. The claim that $\nu_2$ is moment determinate follows from \Cref{thm:threshold-result}\ref{item-1:thm:threshold-result}, since $\sf{d} > 0$ implies $\beta_\phi = 1$ and that $\limsup_{u \to \infty} u^{-1} \phi(u) = \lim_{u \to \infty} u^{-1}\phi(u) = \sf{d}$.

\subsection{Proofs for \Cref{subsec:lin-conjecture}} \label{subsec:lin-conjecture-proofs}

In the proofs below we write, for any $\phi \in \B$, $X(\phi) = X$ for the positive random variable whose law is $\nu_1^\phi$, and, for any $x, t > 0$, $\sigma_t(dx) = \mathbb{P}(X^t \in dx)$.

\subsubsection{Proof of \Cref{thm:Lin-conjecture}\ref{item-1:thm:Lin-conjecture}}

From \cite[Theorem 5.1(2)]{patie:2015} it follows that, for $t > 0$,
\begin{equation*}
\E\left[ \left(X^t \right)^n\right] = W_\phi(tn+1) \stackrel{\infty}{\sim} C_\phi \sqrt{\phi(tn)}e^{G(tn)}
\end{equation*}
where $G(tn) = \int_1^{tn} \log \phi(r) dr$ and $C_\phi > 0$ is a constant depending only on $\phi$. By following similar arguments than the ones developed for  the proof of \Cref{thm:threshold-result}\ref{item-1:thm:threshold-result} we obtain the estimate
\begin{equation*}
\sum_{n = 1}^\infty W_\phi^{-\frac{1}{2n}}(tn+1) \geq C \sum_{n=1}^\infty \phi^{-\frac{t}{2}}(tn),
\end{equation*}
for some constant $C > 0$. Now, for any $\beta > \beta_\phi$, $\phi(n) \stackrel{\infty}{=} \bigO(n^\beta)$ so that, for a constant $C_1 > 0$ depending on $t$ and $\beta$,
\begin{equation*}
\sum_{n=1}^\infty \phi^{-\frac{t}{2}}(tn) \geq C_1 \sum_{n=1}^\infty n^{-\frac{\beta t}{2}}.
\end{equation*}
This latter series diverges if and only if $t \leq \frac{2}{\beta} < \frac{2}{\beta_\phi}$, so that by Carleman's criterion $X^t$ is moment determinate whenever $t < \frac{2}{\beta_\phi}$. When $\limsup_{u \to \infty} u^{-\beta_\phi}\phi(u) < \infty$ we may take $\beta = \beta_\phi$ and apply the above argument, which finishes the proof.

\subsubsection{Proof of \Cref{thm:Lin-conjecture}\ref{item-3:thm:Lin-conjecture}}

Observe that, for $z \in 1 + i\R$ and $t > 0$, we have
\begin{equation*}
\M_{\sigma_t}(z-1) = \E[\left(X_1^t(\phi)\right)^{z-1}] = \E[X^{t(z-1)}] = W_\phi(tz-t+1).
\end{equation*}
Since $W_\phi \in \A_{(0,\infty)}$ it follows that $\M_{\sigma_t}(z-1)$ can be analytically extended to $\Re(z) > 1-\frac{1}{t}$, and we write $\M_{\sigma_t}$ for this analytical extension. Next, we may assume that $\delta_\phi > 0$, since the claim is trivial otherwise, from which it follows that $\phi(\infty)=\infty$. Combining this with the fact that $\phi \in \B_\mc{J}$ gives $\texttt{N}_\phi = \infty$, where we refer to \Cref{subsec:smoothness} for the definition of $\texttt{N}_\phi$, and invoking \cite[Theorem 4.2(3)]{patie:2016} allows us to conclude that, for any $q \geq 0$ and $a > 0$,
\begin{equation*}
|W_\phi(a + ib)| \stackrel{\infty}{=} \bigO(|b|^{-q})
\end{equation*}
uniformly on bounded $a$-intervals, so that for any $q \geq 0$ and $a > -\frac{1}{t}$
\begin{equation*}
|\M_{\sigma_t}(a+ib)| \stackrel{\infty}{=} \bigO(|b|^{-q})
\end{equation*}
uniformly on bounded $a$-intervals. By Mellin inversion we get $\sigma(dx) = \sigma_t(x)dx$ for each $t > 0$ and, from similar arguments as given in the proof of \Cref{thm:smoothness-t}\ref{item-2:thm:smoothness-t}, we get the Mellin-Barnes representation
\begin{equation*}
\sigma_t(x) = \frac{1}{2\pi i } \int_{c-i\infty}^{c+i\infty} x^{-z} W_\phi(tz-t+1) dz,
\end{equation*}
valid for any $c > 1 - \frac{1}{t}$. The change of variables $z \mapsto \frac{(z-1)}{t} + 1$ reveals that
\begin{equation}
\label{eq:Mellin-Barnes-sigma}
\sigma_t(x) = \frac{1}{2 \pi i t} \int_{c-i\infty}^{c+i\infty} x^{-\frac{(z-1)}{t} - 1} W_\phi(z) dz,
\end{equation}
for any $c > 0$, and using \Cref{thm:smoothness-t}\ref{item-2:thm:smoothness-t} to identify the right-hand side of \eqref{eq:Mellin-Barnes-sigma} we establish, for all $t > 0$, the equality
\begin{equation*}
\sigma_t(x) = \frac{1}{t} x^{\frac{1-t}{t}} \nu_1(x^\frac{1}{t})
\end{equation*}
 where $\nu_1(dx) = \nu_1(x)dx$. This identity allows us to use the asymptotic behavior of $\nu_1$ described in \cite[Theorem 5.5]{patie:2015} to get that
\begin{equation*}
\sigma_t(x) \stackrel{\infty}{\sim} \frac{C_\phi}{t\sqrt{2\pi}} x^{\frac{1-t}{t}} \sqrt{\varphi'(x^\frac{1}{t}}) \exp\left(-\int_\sf{k}^{x^\frac{1}{t}} \frac{\varphi(r)}{r} dr\right)
\end{equation*}
where $C_\phi > 0$ is a constant depending on $\phi$ and $\varphi:[\sf{k},\infty) \to [0,\infty)$ is the continuous inverse of $\phi$. Repeating, mutatis mutandis, the arguments from \Cref{thm:threshold-result}\ref{item-3:thm:threshold-result} we conclude that $X^t$ is moment indeterminate for $t > \frac{2}{\delta}$, and the last claim is straightforward.

\subsubsection{Proof of \Cref{thm:Lin-conjecture}\ref{item-4:thm:Lin-conjecture}}

The proof is the same as the one of \Cref{thm:threshold-result}\ref{item-4:thm:threshold-result} after observing that the assumptions imply the factorization of moment sequences
\begin{equation*}
W_\phi(tn+1) = W_{\frac{\phi}{\vartheta}}(tn+1) W_\vartheta(tn+1)
\end{equation*}
valid for any $t > 0$ and $n \geq 0$.

\subsubsection{Proof of \Cref{thm:Lin-conjecture}\ref{item-2:thm:Lin-conjecture}}

When $\phi \in \B_d$ \Cref{prop:reference-function} guarantees, for any $\alpha \in (0,1)$ and suitable $\mathfrak{m}$, that $\frac{\phi}{\phi_{\alpha,\mathfrak{m}}} \in \B$. Applying \Cref{thm:Lin-conjecture}\ref{item-4:thm:Lin-conjecture} it follows that $X^t(\phi)$ is indeterminate for any $t$ such that $X^t(\phi_{\alpha,\mathfrak{m}})$ is indeterminate. However, $\phi_{\alpha,\mathfrak{m}}(u) = (u+\mathfrak{m})^\alpha$, so by a combination of \Cref{prop:reference-function} and some straightforward asymptotic analysis one gets that $\phi_{\alpha,\mathfrak{m}} \in \B_{\asymp}$ with $\beta_{\phi_{\alpha,\mathfrak{m}}} = \alpha > 0$ and $\limsup_{u \to \infty} u^{-\alpha} \phi_{\alpha,\mathfrak{m}}(u) < \infty$. From \Cref{prop:reference-function}\ref{item-1:prop:reference-function} we get that $\phi_{\alpha,\mathfrak{m}}$ is a complete Bernstein function and hence, in particular, $\phi_{\alpha,\mathfrak{m}} \in \B_\J$. Thus \Cref{thm:Lin-conjecture}\ref{item-3:thm:Lin-conjecture} gives that $X^t(\phi_{\alpha,\mathfrak{m}})$ is moment indeterminate if and only if $t > \frac{2}{\alpha}$, from which we conclude that $X^t(\phi)$ is indeterminate for $t > \frac{2}{\alpha}$. Since $\alpha \in (0,1)$ is arbitrary we get that $X^t(\phi)$ is moment indeterminate if $t > 2$, and from \Cref{thm:Lin-conjecture}\ref{item-1:thm:Lin-conjecture} we get moment determinacy for $t \leq 2$, which finishes the proof.

\bibliographystyle{abbrv}

\begin{thebibliography}{10}

\bibitem{akhiezer:1965}
N.~I. Akhiezer.
\newblock {\em The classical moment problem and some related questions in
  analysis}.
\newblock Translated by N. Kemmer. Hafner Publishing Co., New York, 1965.

\bibitem{balkema:1993}
A.~A. Balkema, C.~Kl{\"u}ppelberg, and S.~I. Resnick.
\newblock Densities with {G}aussian tails.
\newblock {\em Proc. London Math. Soc. (3)}, 66(3):568--588, 1993.

\bibitem{balkema:1995}
A.~A. Balkema, C.~Kl{\"u}ppelberg, and U.~Stadtm{\"u}ller.
\newblock Tauberian results for densities with {G}aussian tails.
\newblock {\em J. London Math. Soc. (2)}, 51(2):383--400, 1995.

\bibitem{berg:2005}
C.~Berg.
\newblock On powers of {S}tieltjes moment sequences. {I}.
\newblock {\em J. Theoret. Probab.}, 18(4):871--889, 2005.

\bibitem{berg:2007}
C.~Berg.
\newblock On powers of {S}tieltjes moment sequences. {II}.
\newblock {\em J. Comput. Appl. Math.}, 199(1):23--38, 2007.

\bibitem{berg:2018}
C.~Berg.
\newblock A two-parameter extension of the {U}rbanik semigroup.
\newblock arXiv:1802.00993v1 [math.CV], 02 2018.

\bibitem{berg:1984}
C.~Berg, J.~P.~R. Christensen, and P.~Ressel.
\newblock {\em Harmonic analysis on semigroups}, volume 100 of {\em Graduate
  Texts in Mathematics}.
\newblock Springer-Verlag, New York, 1984.
\newblock Theory of positive definite and related functions.

\bibitem{berg:2004}
C.~Berg and A.~J. Dur{\'a}n.
\newblock A transformation from {H}ausdorff to {S}tieltjes moment sequences.
\newblock {\em Ark. Mat.}, 42(2):239--257, 2004.

\bibitem{berg:2015}
C.~Berg and J.~L. L{\'o}pez.
\newblock Asymptotic behaviour of the {U}rbanik semigroup.
\newblock {\em J. Approx. Theory}, 195:109--121, 2015.

\bibitem{bingham:1989}
N.~H. Bingham, C.~M. Goldie, and J.~L. Teugels.
\newblock {\em Regular variation}, volume~27 of {\em Encyclopedia of
  Mathematics and its Applications}.
\newblock Cambridge University Press, Cambridge, 1989.

\bibitem{blumenthal:1961}
R.~M. Blumenthal and R.~K. Getoor.
\newblock Sample functions of stochastic processes with stationary independent
  increments.
\newblock {\em J. Math. Mech.}, 10:493--516, 1961.

\bibitem{deng:2015}
C.-S. Deng and R.~L. Schilling.
\newblock On shift {H}arnack inequalities for subordinate semigroups and moment
  estimates for {L}{\'e}vy processes.
\newblock {\em Stochastic Process. Appl.}, 125(10):3851--3878, 2015.

\bibitem{folland:1999}
G.~B. Folland.
\newblock {\em Real analysis}.
\newblock Pure and Applied Mathematics (New York). John Wiley \& Sons, Inc.,
  New York, second edition, 1999.
\newblock Modern techniques and their applications, A Wiley-Interscience
  Publication.

\bibitem{fourati:2011}
S.~Fourati and W.~Jedidi.
\newblock Some remarks on the class of {B}ernstein functions and some
  subclasses.
\newblock hal-00649018, 2011.

\bibitem{hirsch:2013}
F.~Hirsch and M.~Yor.
\newblock On the {M}ellin transforms of the perpetuity and the remainder
  variables associated to a subordinator.
\newblock {\em Bernoulli}, 19(4):1350--1377, 2013.

\bibitem{janson:2010}
S.~Janson.
\newblock Moments of gamma type and the {B}rownian supremum process area.
\newblock {\em Probab. Surv.}, 7:1--52, 2010.

\bibitem{jurek:1985}
Z.~J. Jurek.
\newblock Relations between the {$s$}-self-decomposable and self-decomposable
  measures.
\newblock {\em Ann. Probab.}, 13(2):592--608, 1985.

\bibitem{jurek:2009}
Z.~J. Jurek.
\newblock On relations between {U}rbanik and {M}ehler semigroups.
\newblock {\em Probab. Math. Statist.}, 29(2):297--308, 2009.

\bibitem{jurek:1993}
Z.~J. Jurek and J.~D. Mason.
\newblock {\em Operator-limit distributions in probability theory}.
\newblock Wiley Series in Probability and Mathematical Statistics: Probability
  and Mathematical Statistics. John Wiley \& Sons, Inc., New York, 1993.
\newblock A Wiley-Interscience Publication.

\bibitem{kuznetsov:2013}
A.~Kuznetsov and J.~C. Pardo.
\newblock Fluctuations of stable processes and exponential functionals of
  hypergeometric {L}{\'e}vy processes.
\newblock {\em Acta Appl. Math.}, 123:113--139, 2013.

\bibitem{lawrie:1994}
J.~B. Lawrie and A.~C. King.
\newblock Exact solution to a class of functional difference equations with
  application to a moving contact line flow.
\newblock {\em European J. Appl. Math.}, 5(2):141--157, 1994.

\bibitem{letemplier:2015}
J.~Letemplier and T.~Simon.
\newblock On the law of homogeneous stable functionals.
\newblock arXiv:1510.07441v2 [math.PR], 10 2015.

\bibitem{lin:2017}
G.~D. Lin.
\newblock On powers of the {C}atalan number sequence.
\newblock arXiv:1711.01536v1 [math.CO], 11 2017.

\bibitem{patie:2015}
P.~Patie and M.~Savov.
\newblock Spectral expansions of non-self-adjoint generalized {L}aguerre
  semigroups.
\newblock {\em Mem. Amer. Math. Soc.}, 179 pp., 2019.

\bibitem{patie:2016}
P.~Patie and M.~Savov.
\newblock Bernstein-{G}amma functions and exponential functionals of {L}\'{e}vy
  processes.
\newblock {\em Electron. J. Probab.}, 23(75):1--101, 2018.

\bibitem{patie:2017}
P.~Patie and M.~Savov.
\newblock Cauchy problem of the non-self-adjoint {G}auss-{L}aguerre semigroups
  and uniform bounds for generalized {L}aguerre polynomials.
\newblock {\em J. Spectr. Theory}, 7(3):797--846, 2017.

\bibitem{patie:2018a}
P.~Patie and A.~Vaidyanathan.
\newblock Non-classical {T}auberian and {A}belian type criteria for the moment
  problem.
\newblock  arXiv:1804.10721v1 [math.PR], 19pp., 2018.

\bibitem{sato:2013}
K.-i. Sato.
\newblock {\em L{\'e}vy processes and infinitely divisible distributions},
  volume~68 of {\em Cambridge Studies in Advanced Mathematics}.
\newblock Cambridge University Press, Cambridge, 2013.
\newblock Translated from the 1990 Japanese original, Revised edition of the
  1999 English translation.

\bibitem{schilling:2012}
R.~L. Schilling, R.~Song, and Z.~Vondra\v{c}ek.
\newblock {\em Bernstein functions}, volume~37 of {\em De Gruyter Studies in
  Mathematics}.
\newblock Walter de Gruyter \& Co., Berlin, second edition, 2012.
\newblock Theory and applications.

\bibitem{schmudgen:2017}
K.~Schm\"{u}dgen.
\newblock {\em The moment problem}, volume 277 of {\em Graduate Texts in
  Mathematics}.
\newblock Springer, Cham, 2017.

\bibitem{shohat:1943}
J.~A. Shohat and J.~D. Tamarkin.
\newblock {\em The {P}roblem of {M}oments}.
\newblock American Mathematical Society Mathematical surveys, vol. I. American
  Mathematical Society, New York, 1943.

\bibitem{stieltjes:1894}
T.-J. Stieltjes.
\newblock Recherches sur les fractions continues.
\newblock {\em Ann. Fac. Sci. Toulouse Sci. Math. Sci. Phys.}, 8(4):J1--J122,
  1894.

\bibitem{titchmarsh:1986}
E.~C. Titchmarsh.
\newblock {\em Introduction to the theory of {F}ourier integrals}.
\newblock Chelsea Publishing Co., New York, third edition, 1986.

\bibitem{tyan:1975}
S.-G. Tyan.
\newblock {\em The structure of bivariate distribution functions and their
  relation to Markov processes}.
\newblock PhD thesis, Princeton University, 1975.

\bibitem{urbanik:1992}
K.~Urbanik.
\newblock Functionals on transient stochastic processes with independent
  increments.
\newblock {\em Studia Math.}, 103(3):299--315, 1992.

\bibitem{urbanik:1995}
K.~Urbanik.
\newblock Infinite divisibility of some functionals on stochastic processes.
\newblock {\em Probab. Math. Statist.}, 15:493--513, 1995.
\newblock Dedicated to the memory of Jerzy Neyman.

\bibitem{webster:1997}
R.~Webster.
\newblock Log-convex solutions to the functional equation {$f(x+1)=g(x)f(x)$}:
  {$\Gamma$}-type functions.
\newblock {\em J. Math. Anal. Appl.}, 209(2):605--623, 1997.

\end{thebibliography}

\end{document}